\theoremstyle{remark} 
\newtheorem*{remark}{Remark}
\theoremstyle{plain} 
\newtheorem{thm}{Theorem} 
\newtheorem{lem}[thm]{Lemma} 
\newtheorem{cor}[thm]{Corollary}
\newtheorem{prop}[thm]{Proposition}
\newtheorem{thmx}{Theorem}
\newtheorem{corx}{Corollary}
\newcommand{\K}{\mathbb{K}}
\newcommand{\Si}{\mathbb{S}}
\newcommand{\R}{\mathbb{R}}
\newcommand{\F}{\mathscr{F}}
\newcommand{\PP}{\mathscr{P}}
\newcommand{\E}{\mathcal{E}}
\DeclareMathOperator{\ess}{ess}
\DeclareMathOperator{\ind}{ind}
\DeclareMathOperator{\supp}{supp}
\DeclareMathOperator{\Tr}{Tr}
\DeclareMathOperator{\tr}{tr}
\DeclareMathOperator{\dom}{dom}
\DeclareMathOperator{\dist}{dist}
\DeclareMathOperator{\pv}{p.v.}
\DeclareMathOperator{\mre}{Re}
\begin{document} 
\title{The transmission problem on a three-dimensional wedge}
\date{\today} 

\author{Karl-Mikael Perfekt}
\address{Department of Mathematics and Statistics,
	University of Reading, Reading RG6 6AX, United Kingdom}
\email{k.perfekt@reading.ac.uk}

\begin{abstract}
We consider the transmission problem for the Laplace equation on an infinite three-dimensional wedge, determining the complex parameters for which the problem is well-posed, and characterizing the infinite multiplicity nature of the spectrum. This is carried out in two formulations leading to rather different spectral pictures. One formulation is in terms of square integrable boundary data, the other is in terms of finite energy solutions. We use the layer potential method, which requires the harmonic analysis of a non-commutative non-unimodular group associated with the wedge. 

\noindent \textbf{Keywords} Scattering, Wedge, Edge, Plasmon resonance, Layer potential

\noindent \textbf{MSC 2010}: 35P25, 31B20, 46E35
\end{abstract}


\maketitle
\section{Introduction}

Let $\Gamma \subset \R^3$ be a surface, dividing $\R^3$ into interior and exterior domains $\Gamma_{+}$ and $\Gamma_{-}$, respectively. Given a spectral parameter $1 \neq \epsilon \in \mathbb{C}$ and boundary data $f$ and $g$ on $\Gamma$, the static transmission problem seeks a potential $U \colon \Gamma_+ \cup \Gamma_- \to \mathbb{C}$, harmonic in $\Gamma_+$ and $\Gamma_-$,
\begin{equation} \label{eq:trans1}
\Delta U = 0 \textrm { in } \Gamma_+\cup\Gamma_-,
\end{equation}
 such that
\begin{equation} \label{eq:trans2}
\Tr_+ U - \Tr_- U = f \quad \textrm{ and } \quad \partial_n^+ U - \epsilon \partial_n^- U = g \textrm{ on } \Gamma.
\end{equation}
Here $\Tr_{\pm} U$ and $\partial_n^{\pm} U$ denote the limiting boundary values and outward normal derivatives of $U$ on $\Gamma$, $+$ indicating an interior limiting approach, $-$ indicating exterior approach. For precise definitions, see equation~\eqref{eq:nttrace}. To discuss well-posedness, that is, existence and uniqueness of solutions, one has to impose growth and regularity conditions on the potential and the boundary data. We will consider two different sets of conditions which are widely used. One formulation is in terms of square integrable boundary data, the other in terms of finite energy potentials. We refer to these formulations as problems (L) and (E), respectively.

In elecrostatics, the parameter $\epsilon$ corresponds to the (relative) permittivity of a material and is a positive quantity, $\epsilon > 0$. In this case the problems (L) and (E) are very well-studied, and they have been shown to be well-posed for any Lipschitz surface $\Gamma$, see \cite{EFV92, EM04, FJR78, MW12, Ver84} and \cite{Cost07, CM85, HP13}, respectively. One approach to prove well-posedness is via the layer potential method, the success of which relies on the development and power of the theory of singular integrals.  By means of layer potentials, Problem (L) has even been shown to be well-posed for a wide class of very rough surfaces which are not Lipschitz regular \cite{HMT10}.

The transmission problem also appears as a quasi-static problem in electrodynamics, when an electromagnetic wave is scattered from an object that is much smaller than the wavelength. The permittivity $\epsilon$ is then complex and dependent on the frequency of the wave. In this setting, the properties of the transmission problem are very subtle. Problems (L) and (E) are no longer well-posed for certain $\epsilon \in \mathbb{C}$. When $\epsilon < 0$ this corresponds to the possibility of exciting surface plasmon resonances in nanoparticles made out of gold, silver, and other other materials \cite{AMRZ, ARYZ, Garnett, WKS08}. Metamaterials, specifically designed synthetic materials, can also exhibit effective permittivities with negative real part \cite{ASSE, Mil02, NMM93}. 

The set of $\epsilon \in \mathbb{C}$ for which the transmission problem is ill-posed -- the spectrum -- depends on the shape of the interface $\Gamma$. Strikingly, when the surface $\Gamma$ has singularities, the spectrum also depends heavily on the imposed growth and regularity conditions. For instance, when $\Gamma \subset \R^2$ is a curvilinear polygon in 2D, the spectrum of problem (L) is a union of two-dimensional regions in the complex plane, in addition to a set of real eigenvalues \cite{Mit02, Shele90}. On the other hand, the formulation of problem (E) is more directly grounded in physics. Accordingly, the spectrum of problem (E) is a real interval, plus eigenvalues, when $\Gamma$ is a curvilinear polygon \cite{BZ17, PP16}. In three dimensions, similar results hold for surfaces with rotationally symmetric conical points \cite{HP18}.

We will study the case when $\Gamma_{+} = \Gamma_{\alpha, +}$ is a three-dimensional infinite wedge with opening angle $\alpha \neq \pi$,
$$\Gamma_{\alpha, +} = \{(x\cos \theta, x\sin \theta,z) \in \R^3 \, : \, x > 0, \; 0 < \theta < \alpha\},$$
with boundary 
\begin{equation} \label{eq:wedgedecomp}
\Gamma_\alpha = \partial \Gamma_{\alpha, +} = \{(x,0,z) \in \R^3 \, : \, x \geq 0\} \cup \{(x\cos \alpha, x\sin \alpha,z) \in \R^3 \, : \, x \geq 0\}.
\end{equation}
By convention, we refer to $\Gamma_{\alpha, -} = \R^3 \setminus \overline{\Gamma_{\alpha, +}}$ as the exterior domain.
The two transmission problems (L) and (E) are given by
\begin{equation*}
\textrm{(L)} 	\begin{cases} 
\mathcal{M}^{+}(\nabla U), \mathcal{M}^{-}(\nabla U)  \in L^2(\Gamma_{\alpha}) \\
\Delta U = 0 \textrm{ in } \Gamma_{\alpha, +}\cup\Gamma_{\alpha, -}, \\
\Tr_+ U - \Tr_- U = f \in \dot{H}^1(\Gamma_{\alpha}), \\
\partial_n^+ U - \epsilon \partial_n^- U = g \in L^2(\Gamma_{\alpha}),
\end{cases} \textrm{(E)}\begin{cases} 
\int_{\R^3} |\nabla U|^2 \, dV < \infty, \\
\Delta U = 0 \textrm{ in } \Gamma_{\alpha, +}\cup\Gamma_{\alpha, -}, \\
\Tr_+ U - \Tr_- U = f \in \dot{H}^{1/2}(\Gamma_\alpha), \\
\partial_n^+ U - \epsilon \partial_n^- U = g \in \dot{H}^{-1/2}(\Gamma_{\alpha}).
\end{cases}
\end{equation*}
Here $\mathcal{M}^{\pm}(\nabla U)$ is an interior/exterior non-tangential maximal function of $\nabla U$, and $\dot{H}^s(\Gamma_\alpha)$ denotes a homogeneous Sobolev space of index $s$ along $\Gamma_\alpha$, see Sections~\ref{sec:L2} and \ref{sec:energy}. Alternatively, $\dot{H}^{1/2}(\Gamma_\alpha)$ can be viewed as the trace space of the space of harmonic functions in $\Gamma_{\alpha,+}$ satisfying that $\int_{\Gamma_{\alpha, +}} |\nabla U|^2 \, dV < \infty$. The negative space $\dot{H}^{-1/2}(\Gamma_\alpha)$ will be given an intrinsic description in terms of single layer potentials. The incomparability of $\dot{H}^{s_1}(\Gamma_\alpha)$ and $\dot{H}^{s_2}(\Gamma_\alpha)$, $s_1 \neq s_2$, will cause us some difficulties.

The purpose of this detailed study of the wedge is to have $\Gamma_{\alpha, +}$ serve as a model for general domains in $\R^3$ with edges. For domains with corners in 2D, the problems (L) and (E) are now well understood; a successful approach is to first consider the layer potential method on the infinite 2D-wedge \cite{GM90, KLY15, Lew90}, and to then reduce the study of curvilinear polygons to that of infinite wedges via a localization procedure. In 3D, similar approaches can be taken for domains with conical points \cite{HP18, KMR97, NQ12}. 

To fix the notation and to explain the layer potential approach at this point, we let $\K \colon L^2(\Gamma) \to L^2(\Gamma)$ denote the harmonic layer potential
$$\K f(r) = \pv \frac{1}{2\pi} \int_{\Gamma} \frac{(r - r') \cdot n(r)}{|r'-r|^3} f(r') \, d\sigma(r'), \quad r \in \Gamma, \; f \in L^2(\Gamma),$$
where $n(r)$ denotes the unit outward normal to $\Gamma$ at $r$, and $\sigma$ the surface measure on $\Gamma$. The adjoint $\K^\ast$ (with respect to $L^2(\Gamma)$) is known as the double layer potential or the Neumann--Poincar\'e operator. The single layer potential of a charge $f$ is given by
\begin{equation}
\label{eq:Sgendef}
\Si f(r) = \frac{1}{4\pi}\int_{\Gamma} \frac{1}{|r-r'|} f(r') \, d\sigma(r'), \quad r \in \R^3.
\end{equation}
When $\Gamma = \Gamma_\alpha$ we write $\K = \K_\alpha$ and $\Si = \Si_{\alpha}$.
Note that $\Si f$ is harmonic in $\Gamma_+ \cup \Gamma_-$. Differentiation leads to the jump formulas
$$\partial_n^{\pm} \Si f = \frac{1}{2}(\pm f - \K f) \textrm{ on } \Gamma.$$
The ansatz $U = \Si h_+$ in $\Gamma_+$ and $U = \Si h_-$ in $\Gamma_-$ hence relates the transmission problems (L) and (E) to spectral problems for the layer potential $\K$.

Previous studies of the transmission problem and layer potentials on the infinite three-dimensional wedge include the following. Eigensolutions to the transmission problem constructed via separation of variables can be found in \cite{DM72, WKS08}. Grachev and Maz'ya \cite{GM90} studied problem (E), using their results as a technical tool to describe the Fredholm radius of the double layer potential on certain weighted H\"older spaces for surfaces with edges. Fabes, Jodeit, and Lewis \cite{FJL77} observed, for $\alpha = \pi/2$, that the double layer potential $\K_{\alpha}^\ast$ on $\Gamma_{\alpha}$ can be regarded as a block matrix of convolution operators on the matrix group
$$G = \left \{(x,z) = \begin{pmatrix} 
x & z \\
0 & 1
\end{pmatrix}\, : \, x > 0, \; z \in \R\right \},$$ 
known as the $ax+b$ group. See also \cite{Qiao18}, where general angles and weighted $L^2$-spaces were considered. $G$ is a non-Abelian and non-unimodular group, and therefore does not support standard harmonic analysis. For $\alpha = \pi/2$, Fabes et. al. proved that $\K^\ast_{\alpha} \pm \mathbb{I}$ has an infinite-dimensional kernel on $L^{p}(\Gamma_\alpha)$ whenever $1 < p < 3/2$, where $\mathbb{I}$ denotes the identity operator. They proved this by constructing eigenfunctions through a rather delicate argument involving the partial Fourier transform in the $z$-variable. It is a natural idea to study layer potentials in the wedge by applying partial transforms in the $z$- and $x$-variables, cf. \cite{NL95, Schar04}, but such a procedure does not completely resolve $\K_{\alpha}^\ast$.

An explicit harmonic analysis for the $ax + b$ group was being developed around the same time that \cite{FJL77} was published, leading to the first example of a non-unimodular group equipped with a Plancherel theorem \cite{DM76, ET79, Khal74}. The corresponding Fourier transform of $G$ associates $\K_{\alpha}$ with four multiplication operators $M_T$, where $T \colon \mathcal{H} \to \mathcal{H}$ is an operator on an infinite-dimensional Hilbert space $\mathcal{H}$. As such, it does not provide a high level of resolution of the operator $\K_{\alpha}$, and it may seem that we are gaining an unmerited amount of information from the harmonic analysis of $G$. However, key to our results will be to identify each operator $T$ as a pseudo-differential operator of Mellin type \cite{Els87, Lew91, LP83}, after which we can apply the symbolic calculus of such operators to understand the spectrum of $\K_{\alpha}$.

Let $\Sigma_{\alpha} \subset \mathbb{C}$ denote the simple closed curve
$$\Sigma_{\alpha} = \left \{-\frac{\sin \left( (\frac{1}{2} +i\xi) (\pi - \alpha) \right)}{\sin \left( (\frac{1}{2} +i\xi) \pi \right)} \, : \, -\infty \leq \xi \leq \infty \right \},$$
and let $\widehat{\Sigma}_{\alpha}$ denote this curve together with its interior, see Figure~\ref{fig:spec}. For an operator $T \colon \mathcal{H} \to \mathcal{H}$, the spectrum $\sigma(T, \mathcal{H})$ is defined as usual, and we define the essential spectrum in the sense of Fredholm operators,
\begin{align*}
\sigma(T, \mathcal{H}) &= \{\lambda \in \mathbb{C} \, : \, T - \lambda \colon \mathcal{H} \to \mathcal{H} \textrm{ is not invertible}\}, \\
\sigma_{\ess}(T, \mathcal{H}) &= \{\lambda \in \mathbb{C} \, : \, T - \lambda \colon \mathcal{H} \to \mathcal{H} \textrm{ is not Fredholm}\}.
\end{align*}
In Theorem~\ref{thm:main1} we will characterize the spectrum of $\K_{\alpha} \colon L^{2,a}(\Gamma_{\alpha}) \to L^{2,a}(\Gamma_{\alpha})$ for $-1 < a < 3$, where 
\begin{equation} \label{eq:L2adef}
L^{2,a}(\Gamma_{\alpha}) = L^{2,a}(x^a \, dx \, dz) \oplus L^{2,a}(x^a \, dx \, dz),
\end{equation}
the orthogonal sum referring to the decomposition \eqref{eq:wedgedecomp} of $\Gamma_{\alpha}$. For simplicity, we shall only state the theorem for $a = 0$ here.
\begin{thmx} \label{thmx:A}
The spectrum of $\K_\alpha \colon L^{2}(\Gamma_\alpha) \to L^{2}(\Gamma_\alpha)$ satisfies that
$$\sigma(\K_\alpha, L^{2}(\Gamma_\alpha)) = \sigma_{\ess}(\K_\alpha, L^{2}(\Gamma_\alpha)) = -\widehat{\Sigma}_{\alpha} \cup \widehat{\Sigma}_{\alpha}.$$
Let $0 \neq \lambda \in -\widehat{\Sigma}_{\alpha} \cup \widehat{\Sigma}_{\alpha}$. Then,
\begin{enumerate}
\item $\lambda$ is an eigenvalue of the double layer potential $\K_\alpha^\ast \colon L^{2}(\Gamma_\alpha) \to L^{2}(\Gamma_\alpha)$ of infinite multiplicity, if $\lambda$ is an interior point of the spectrum;
\item there are generalized eigenfunctions of $\K_\alpha \colon L^{2}(\Gamma_\alpha) \to L^{2}(\Gamma_\alpha)$ corresponding to the point $\lambda$. In fact, $\lambda$ is an eigenvalue of $\K_\alpha \colon L^{2,2+\varepsilon}(\Gamma_{\alpha}) \to L^{2,2+\varepsilon}(\Gamma_{\alpha})$ of infinite multiplicity, for every $0 < \varepsilon < 1$, and, if $\lambda$ is an interior point, for $\varepsilon = 0$.
\end{enumerate}
Furthermore, $\K_\alpha \colon L^{2}(\Gamma_\alpha) \to L^{2}(\Gamma_\alpha)$ is normaloid, 
$$\|\K_\alpha\|_{B(L^{2}(\Gamma_\alpha))} = |\sigma(\K_\alpha, L^{2}(\Gamma_\alpha))| = \left|\sin\left( \frac{\pi - \alpha}{2} \right)\right|.$$
\end{thmx}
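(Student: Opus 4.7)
My approach is to exploit the dilation and $z$-translation symmetries of $\Gamma_\alpha$ to reduce $\K_\alpha$ to a family of Mellin pseudo-differential operators, whose spectral theory can then be read off from symbolic calculus.

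First, I would split $\Gamma_\alpha$ along the edge into its two flat faces. Because $(r-r') \cdot n(r) = 0$ whenever $r, r'$ lie in the same face, the two diagonal blocks of $\K_\alpha$ in the decomposition $L^2(\Gamma_\alpha) = L^2 \oplus L^2$ vanish, so $\K_\alpha$ is block anti-diagonal with off-diagonal entries $A, B$, and $\lambda \in \sigma(\K_\alpha)$ if and only if $\lambda^2 \in \sigma(AB) \cup \sigma(BA)$. The coordinates $(x,z)$ on each face identify it with the $ax+b$ group $G$ in such a way that $A$ and $B$ become right-convolution operators. Applying the non-unimodular Plancherel theorem for $G$, the convolution operator $A$ decomposes into a pair $A_\pm \colon \mathcal{H} \to \mathcal{H}$, each acting on its copy of the Hilbert--Schmidt class $\mathcal{S}_2(\mathcal{H})$ by left multiplication, and similarly for $B$. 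Through the tensorial identification $\mathcal{S}_2(\mathcal{H}) \cong \mathcal{H} \otimes \mathcal{H}^\ast$, any eigenvalue of $A_\pm$ or $B_\pm$ on $\mathcal{H}$ automatically becomes an infinite-multiplicity eigenvalue of the corresponding off-diagonal block of $\K_\alpha$.

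Next, I would identify each of $A_\pm, B_\pm$ with a zero-order Mellin pseudo-differential operator on $L^2(\R_+, dx/x)$, the residual scaling direction after the $G$-reduction. A Mellin transform of the explicit kernel, carried out analogously to the 2D corner calculation of \cite{Lew91, LP83}, should produce a principal symbol whose squared image recovers $\Sigma_\alpha^2$ up to sign. By the symbolic calculus of Mellin pseudo-differential operators \cite{Els87, Lew91}, this image is the essential spectrum of $A_\pm B_\pm$ and $B_\pm A_\pm$, and extracting square roots yields $-\widehat{\Sigma}_\alpha \cup \widehat{\Sigma}_\alpha$. The upgrade from the boundary curve $\Sigma_\alpha$ to its closed interior $\widehat{\Sigma}_\alpha$ reflects a failure of Fredholmness on $\mathcal{S}_2(\mathcal{H})$ at interior $\lambda$: although the scalar Mellin symbol never meets $\lambda$ there, one builds Weyl-type approximate eigenfunctions by concentrating in the infinite-dimensional representation fibre $\mathcal{H}$. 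Claim (1) then follows by refining these into genuine eigenfunctions for $\K_\alpha^\ast$ at interior $\lambda$ and using the $\mathcal{H} \otimes \mathcal{H}^\ast$ structure to boost the multiplicity to infinity.

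For claim (2), passing to $L^{2, 2+\varepsilon}(\Gamma_\alpha)$ amounts to shifting the Mellin contour of $A_\pm, B_\pm$ by $(1+\varepsilon)/2$, which converts essential-spectrum points on $\Sigma_\alpha$ into bona fide eigenvalues in the weighted space; pulling these back to $L^2$ exhibits the generalized eigenfunctions. Finally, for the normaloid identity I would combine the block formula $\|\K_\alpha\| = \max(\|A\|, \|B\|)$ with the fact that each $\|A_\pm\|$ equals the $L^\infty$-norm of its Mellin symbol; a direct maximization of $|\sin((\tfrac{1}{2}+i\xi)(\pi-\alpha))/\sin((\tfrac{1}{2}+i\xi)\pi)|$ shows the supremum is attained at $\xi = 0$ with value $|\sin((\pi-\alpha)/2)|$, which also equals the spectral radius $\max_{\Sigma_\alpha}|\lambda|$. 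The chief obstacle throughout is the interior filling-in in the second step: verifying that every interior point of $\Sigma_\alpha$ genuinely lies in the essential spectrum on $\mathcal{S}_2(\mathcal{H})$, and producing actual rather than merely approximate $\K_\alpha^\ast$-eigenfunctions there, is precisely where the non-commutative non-unimodular harmonic analysis of $G$ is genuinely needed.
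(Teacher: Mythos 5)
Your overall skeleton -- block anti-diagonal decomposition over the two faces, right-convolution on the $ax+b$ group, the non-unimodular Plancherel theorem turning $K_\alpha$ into multiplication operators on $\mathcal{S}_2(\mathcal{H})$, and the identification of the fibre operators with Mellin pseudo-differential operators -- is exactly the paper's route. But there are two genuine gaps. First, your mechanism for filling in the interior of $\Sigma_\alpha$ is wrong. At an interior point $\lambda$ the fibre operator $T-\lambda$ \emph{is} Fredholm on $\mathcal{H}=L^2(\R_+)$ (the symbol is elliptic there), so no Weyl sequence ``concentrating in the representation fibre'' exists at that level; there is nothing approximate to refine into a genuine eigenfunction. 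The actual mechanism is the \emph{index}: the Mellin symbolic calculus gives $\ind(T-\lambda)=W(\Sigma_{\alpha},\lambda)=\pm 1$ for interior $\lambda$, so either $T-\lambda$ or $T^\ast-\overline{\lambda}$ has nontrivial kernel, and the tensor structure $\mathcal{S}_2(\mathcal{H})\cong\mathcal{H}\otimes\mathcal{H}^\ast$ then converts a single fibre eigenvector into an infinite-dimensional eigenspace of the multiplication operator. The sign of the winding number (which flips with the weight $a$) is also what decides whether you get eigenfunctions of $\K_\alpha$ or of $\K_\alpha^\ast$, which is the content of items (1) and (2); your proposal cannot distinguish these.

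Second, you never rule out spectrum \emph{outside} $-\widehat{\Sigma}_{\alpha}\cup\widehat{\Sigma}_{\alpha}$. Since $\sigma(M_T,\mathcal{S}_2)=\sigma(T,\mathcal{H})$ and not merely $\sigma_{\ess}(T,\mathcal{H})$, an index-zero Fredholm operator $T-\lambda$ with $\lambda$ in the exterior region could still have a nontrivial kernel, contributing an eigenvalue of $\K_\alpha$ there. The classical Kellogg argument for reality of eigenvalues fails on the unbounded graph $\Gamma_\alpha$ because $L^2(\Gamma_\alpha)$ does not embed in the energy space; the paper substitutes Lemma~\ref{lem:realeig}, which truncates a putative eigenfunction, pushes the tail into $L^{2,1}$ via the convolution structure, and concludes that any eigenvalue lies in the real spectrum of the self-adjoint realization $\K_\alpha$ on $L^{2,1}(\Gamma_\alpha)$, hence inside $-\widehat{\Sigma}_{\alpha}\cup\widehat{\Sigma}_{\alpha}$ by the norm bound. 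Without some such argument the asserted equality of the spectrum with $-\widehat{\Sigma}_{\alpha}\cup\widehat{\Sigma}_{\alpha}$ is not established. A smaller point: the operator norm of $T^{\pm}_{\alpha,0}$ is not the $L^\infty$-norm of its Mellin symbol (the operator is only a compact perturbation of a truncated Mellin convolution); the correct upper bound $\|K_\alpha\|\le\|k_\alpha\Delta^{1/2}\|_{L^1(G)}$ comes from Young's inequality on the non-unimodular group, and it happens to coincide with the spectral radius because the kernel has constant sign.
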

\begin{remark}
For an infinite 2D-wedge $\gamma_\alpha$ of angle $\alpha$, the spectrum of the double layer potential on $L^2(\gamma_\alpha)$ is the curve $-\Sigma_{\alpha} \cup \Sigma_{\alpha}$, without any interior \cite{Mit02}. In this case, neither the double layer potential, nor its adjoint, has any eigenvalues.
\end{remark}
 \begin{figure}[h!]
 	\centering
 	\includegraphics[width=0.80\linewidth]{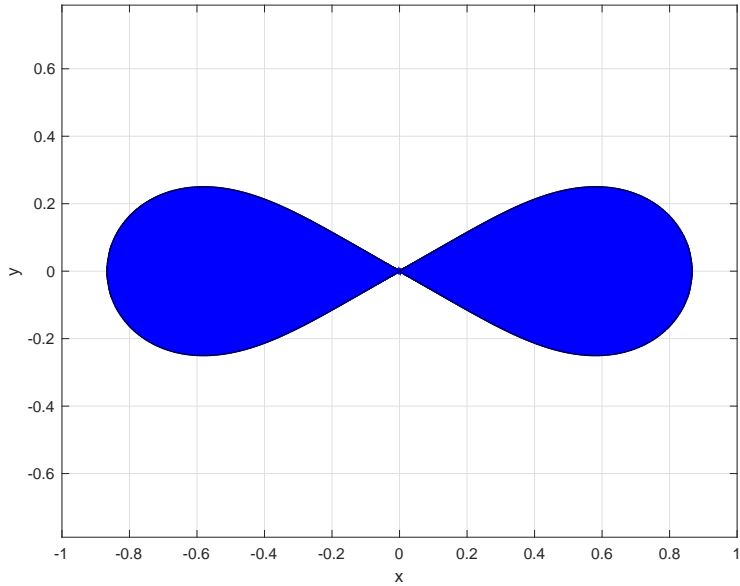}
 	\caption{Plot in the complex plane, $z = x+iy$, of the essential spectrum $-\widehat{\Sigma}_{\alpha} \cup \widehat{\Sigma}_{\alpha}$ of $\K_\alpha \colon L^{2}(\Gamma_\alpha) \to L^{2}(\Gamma_\alpha)$ for $\alpha = \pi/3$. Every point in the interior is an eigenvalue of infinite multiplicity of the double layer potential.}
 	\label{fig:spec}
 \end{figure}
In proving Theorem~\ref{thmx:A} we will show that any eigenvalue of $\K_\alpha \colon L^{2}(\Gamma_\alpha) \to L^{2}(\Gamma_\alpha)$ is real. Therefore, for non-real $\lambda$, the eigenfunctions of item (2) are truly generalized. Whether the same is valid for real $\lambda$ is left open. From Theorem~\ref{thmx:A} we obtain the promised corollary for the transmission problem (L).
\begin{corx}
	Let $1 \neq \epsilon \in \mathbb{C}$ and $f \in \dot{H}^1(\Gamma_{\alpha})$. Then the transmission problem {\rm (L)}
	is well posed (modulo constants) for all $g \in L^2(\Gamma_{\alpha})$ if and only if 
	$$
	\frac{1+\epsilon}{1-\epsilon} \notin -\widehat{\Sigma}_{\alpha} \cup \widehat{\Sigma}_{\alpha}.
	$$
\end{corx}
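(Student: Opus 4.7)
The plan is to use the single layer ansatz for $U$ to reduce well-posedness of (L) to invertibility of $\mu - \K_\alpha$ on $L^2(\Gamma_\alpha)$, where $\mu = \frac{1+\epsilon}{1-\epsilon}$; Theorem~\ref{thmx:A} then identifies the failure set as $-\widehat{\Sigma}_\alpha \cup \widehat{\Sigma}_\alpha$.

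First I would reduce to the case $f = 0$. Using a mapping property (to be established earlier in the paper via the $ax+b$ harmonic analysis) saying that $\Si_\alpha \colon L^2(\Gamma_\alpha) \to \dot{H}^1(\Gamma_\alpha)$ is a bijection modulo constants, I would solve $\Si_\alpha \psi = f$ for some $\psi \in L^2(\Gamma_\alpha)$ and set $V = \tfrac{1}{2}\Si_\alpha \psi$ in $\Gamma_{\alpha,+}$, $V = -\tfrac{1}{2}\Si_\alpha \psi$ in $\Gamma_{\alpha,-}$. Continuity of the single layer trace and the jump formula for $\partial_n \Si_\alpha$ give $\Tr_+ V - \Tr_- V = f$ and $\partial_n^{\pm} V \in L^2(\Gamma_\alpha)$, while the $L^2$ bound on the non-tangential maximal function of $\nabla \Si_\alpha$ applied to $L^2$ densities supplies the regularity required in (L). Writing $W = U - V$ turns (L) into the same problem with trace-jump datum $0$ and $L^2$ Neumann datum $\tilde g$ differing from $g$ by a bounded linear transformation.

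For the reduced problem I use the ansatz $W = \Si_\alpha h$ on all of $\R^3$ with $h \in L^2(\Gamma_\alpha)$. Continuity of the single layer trace makes the homogeneous trace-jump condition automatic, while the jump formula $\partial_n^{\pm} \Si_\alpha h = \tfrac{1}{2}(\pm h - \K_\alpha h)$ converts the remaining boundary condition into
\[
\tfrac{1-\epsilon}{2}\,(\mu - \K_\alpha)\,h = \tilde g \quad \text{on } \Gamma_\alpha.
\]
Bijectivity of the map $\tilde g \mapsto h$ on $L^2(\Gamma_\alpha)$ is therefore equivalent to invertibility of $\mu - \K_\alpha$ on $L^2(\Gamma_\alpha)$, which by Theorem~\ref{thmx:A} holds precisely when $\mu \notin -\widehat{\Sigma}_\alpha \cup \widehat{\Sigma}_\alpha$.

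The ``only if'' direction requires that every solution of (L) admits such a single layer representation, so that a failure of invertibility of $\mu - \K_\alpha$ obstructs well-posedness concretely: non-injectivity produces a nontrivial homogeneous solution $W = \Si_\alpha h$ that is not a constant, while non-surjectivity produces an $L^2$ datum $\tilde g$ outside the range. This reduces to uniqueness for the interior and exterior Dirichlet/Neumann problems with $L^2$ non-tangential gradient data on $\Gamma_{\alpha,\pm}$. I expect the representation-and-uniqueness step to be the main obstacle: because $\Gamma_\alpha$ is non-Lipschitz, one cannot rely on classical layer potential theory, and the required bijectivity of $\Si_\alpha \colon L^2(\Gamma_\alpha) \to \dot{H}^1(\Gamma_\alpha)/\mathbb{C}$ together with the unique solvability of the associated Dirichlet problem must be extracted from the Mellin pseudo-differential calculus developed for Theorem~\ref{thmx:A}, rather than from standard singular integral estimates.
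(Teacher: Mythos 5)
Your reduction is essentially the paper's argument in a different parametrization: the paper represents $U = \Si_\alpha h_\pm + c$ with two densities and obtains the system $\Si_\alpha(h_+ - h_-) = f$, $(\K_\alpha - \tfrac{1+\epsilon}{1-\epsilon})h_+ = \cdots$; setting $\psi = h_+ - h_-$ and $h = \tfrac12(h_+ + h_-)$ recovers exactly your decomposition $U = V + W$, and your equation $\tfrac{1-\epsilon}{2}(\mu - \K_\alpha)h = \tilde g$ is the same Fredholm alternative. The one substantive problem is your final paragraph. The surface $\Gamma_\alpha$ \emph{is} an unbounded Lipschitz graph (after a rotation it is the graph of $(x,z)\mapsto m|x|$ for a suitable slope $m$, for every $0<\alpha<2\pi$, $\alpha\neq\pi$), and the paper leans on precisely the classical $L^2$ layer potential theory for Lipschitz graphs that you dismiss: the isomorphism $\Si_\alpha\colon L^2(\Gamma_\alpha)\to\dot{H}^1(\Gamma_\alpha)$, the well-posedness of the interior and exterior regularity Dirichlet problems, and the single layer representation of every solution with $\mathcal{M}^{\pm}(\nabla U)\in L^2(\Gamma_\alpha)$ are all quoted from Dahlberg--Kenig \cite{DK87} and the standard non-tangential machinery of \cite{EM04, Ver84}. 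The Mellin pseudo-differential calculus and the harmonic analysis of the $ax+b$ group enter only in computing $\sigma(\K_\alpha, L^2(\Gamma_\alpha))$, i.e.\ in Theorem~\ref{thmx:A} itself. So the step you flag as the ``main obstacle'' is in fact the standard off-the-shelf input; as written, your proof leaves that step unresolved and points to the wrong source for resolving it, but once it is supplied from \cite{DK87} the argument closes and coincides with the paper's.
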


To treat problem (E), we follow Costabel \cite{Cost07} and Khavinson, Putinar, and Shapiro \cite{KPS07} by introducing the energy space $\E(\Gamma_\alpha)$ with norm
$$ \|f\|_{\E(\Gamma_\alpha)}^2 = \langle \Si_{\alpha} f, f \rangle_{L^2(\Gamma_\alpha)}.$$
This is motivated by Green's formula, which, ignoring technicalities, shows that $f \in \E(\Gamma_\alpha)$ if and only if $\int_{\R^3} |\nabla \Si_{\alpha} f|^2 \, dV < \infty$, see equation~\eqref{eq:energyproduct}.
Section~\ref{sec:energy} is devoted to proving that $\E(\Gamma_\alpha)$ coincides with the homogeneous Sobolev space $\dot{H}^{-1/2}(\Gamma_{\alpha})$, 
$$\E(\Gamma_\alpha) \simeq \dot{H}^{-1/2}(\Gamma_{\alpha}).$$
 The proof proceeds via interpolation, based on Dahlberg and Kenig's result \cite{DK87} that $\Si_{\alpha} \colon L^2(\Gamma_\alpha) \to \dot{H}^1(\Gamma_{\alpha})$ is an isomorphism, where $\Si_{\alpha}$ is understood as a map on the boundary $\Gamma_\alpha$.

The advantage of working with the energy space $\E(\Gamma_\alpha)$ is that $\K_{\alpha} \colon \E(\Gamma_\alpha) \to \E(\Gamma_\alpha)$ is self-adjoint, a consequence of the Plemelj formula
$$\Si_\alpha \K_\alpha = \K_\alpha^\ast \Si_\alpha,$$
which we will motivate in our setting. This explains why the energy formulation (E) of the transmission problem has a real spectrum. The study of the two operators $\K_{\alpha} \colon \E(\Gamma_\alpha) \to \E(\Gamma_\alpha)$ and $\K_{\alpha} \colon L^2(\Gamma_\alpha) \to L^2(\Gamma_\alpha)$ is reminiscent of   Krein's framework of symmetrizable operators \cite{Kre98}. However, a level of caution is necessary, since, unlike to Krein's setting, $\Si_{\alpha} \colon L^2(\Gamma_\alpha) \to L^2(\Gamma_\alpha)$ is an unbounded operator.

The main result concerning $\K_{\alpha} \colon \E(\Gamma_\alpha) \to \E(\Gamma_\alpha)$ is the following.
\begin{thmx} \label{thmx:B}
		The spectrum of the bounded self-adjoint operator $\K_\alpha \colon \E(\Gamma_\alpha) \to \E(\Gamma_\alpha)$ satisfies that
	$$\sigma(\K_\alpha, \E(\Gamma_\alpha)) = \sigma_{\textrm{ess}}(\K_\alpha, \E(\Gamma_\alpha)) = [-|1 - \alpha/\pi|, |1 - \alpha/\pi|].$$
	Every $0 \neq \lambda \in \sigma(\K_\alpha, \E(\Gamma_\alpha))$ is an eigenvalue of $\K_{\alpha} \colon L^{2,1+\varepsilon}(\Gamma_\alpha) \to L^{2,1+\varepsilon}(\Gamma_{\alpha})$ of infinite multiplicity, for $0 < \varepsilon < 2$.
\end{thmx}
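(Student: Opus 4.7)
Proof proposal for Theorem~\ref{thmx:B}:

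The plan is to combine three ingredients: the self-adjointness of $\K_\alpha$ on $\E(\Gamma_\alpha)$ forced by the Plemelj identity, the group-Fourier decomposition of $\K_\alpha$ on the $ax+b$ group already set up for Theorem~\ref{thmx:A}, and the Mellin symbolic calculus applied to the resulting pseudo-differential operators along a contour adapted to the energy norm. First I would verify that $\K_\alpha$ is bounded and symmetric on $\E(\Gamma_\alpha)$ using $\E(\Gamma_\alpha)\simeq \dot H^{-1/2}(\Gamma_\alpha)$ and the Plemelj relation $\Si_\alpha \K_\alpha = \K_\alpha^\ast \Si_\alpha$: for $f,g$ in a suitable dense subspace,
$$\langle \K_\alpha f, g\rangle_{\E(\Gamma_\alpha)} = \langle \Si_\alpha \K_\alpha f, g\rangle_{L^2(\Gamma_\alpha)} = \langle \K_\alpha^\ast \Si_\alpha f, g\rangle_{L^2(\Gamma_\alpha)} = \langle f, \K_\alpha g\rangle_{\E(\Gamma_\alpha)}.$$
Boundedness follows by interpolation between the $L^{2,a}$-bounds of Theorem~\ref{thmx:A}, placing $\E(\Gamma_\alpha)$ as an intermediate space in the weighted scale. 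In particular $\sigma(\K_\alpha,\E(\Gamma_\alpha))\subset \R$.

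Next I would transfer the Plancherel decomposition of the $ax+b$ group to the energy space: on the transform side, $\K_\alpha$ on $L^{2,a}(\Gamma_\alpha)$ is a direct integral of Mellin pseudo-differential operators on an auxiliary Hilbert space $\mathcal H$, with the weight parameter $a$ acting by translation of the Mellin contour; and $\Si_\alpha$ is realised as multiplication by a positive scalar Mellin multiplier $S(\xi)$. Consequently the $\E(\Gamma_\alpha)$-inner product corresponds to an $S$-weighted inner product, and $\K_\alpha$ on $\E(\Gamma_\alpha)$ becomes unitarily equivalent to a self-adjoint Mellin pseudo-differential operator evaluated on the contour relevant to $\dot H^{-1/2}$. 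The symbolic calculus then identifies the essential spectrum as the closed range of the principal symbol along that contour. Self-adjointness from Step~1 forces this symbol to be real-valued, and a direct simplification using the explicit formulas for $\Si_\alpha$ and $\K_\alpha$ in the transform picture gives the range $[-|1-\alpha/\pi|,|1-\alpha/\pi|]$, the extremal values arising as the limits $\xi\to\pm\infty$ of the principal symbol. Since the symbol attains every value in the interval on an unbounded set of $\xi$'s, the full spectrum coincides with the essential spectrum, and no isolated eigenvalues of finite multiplicity can appear; hence $\sigma(\K_\alpha,\E(\Gamma_\alpha))=\sigma_{\ess}(\K_\alpha,\E(\Gamma_\alpha))=[-|1-\alpha/\pi|,|1-\alpha/\pi|]$.

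For the eigenfunction construction in $L^{2,1+\varepsilon}(\Gamma_\alpha)$ I would re-use the transformed picture with a shifted contour. Each $0\neq \lambda$ in the interval corresponds, by the symbol analysis of Step~2, to a complex zero $\xi_0$ of the symbol of $\K_\alpha - \lambda$ away from the real line; shifting the Mellin contour to the level $\mre s = -(1+\varepsilon)/2$ hits such zeros, and distributional solutions of the form $|x|^{-(1+\varepsilon)/2 - i\xi_0}\otimes \phi(z)$ (with $\phi$ an arbitrary $L^2$-function, producing infinite multiplicity from the edge direction) lift to eigenfunctions of $\K_\alpha$ on $L^{2,1+\varepsilon}(\Gamma_\alpha)$. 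The condition $0<\varepsilon<2$ is precisely what keeps $1+\varepsilon$ inside the admissible strip $-1<a<3$ of Theorem~\ref{thmx:A}, so that the lift is genuine and the weighted $L^2$-integrability at the edge and at infinity is preserved.

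The main obstacle will be Step~2/3: computing the principal symbol of $\K_\alpha$ on $\E(\Gamma_\alpha)$ and tracking how the $ax+b$ Plancherel measure and the Mellin symbol of $\Si_\alpha$ combine to produce the remarkably clean endpoints $\pm(1-\alpha/\pi)$, rather than the trigonometric values $\pm\sin((\pi-\alpha)/2)$ familiar from Theorem~\ref{thmx:A}. This requires a careful algebraic simplification of a ratio of $\sin$-type hypergeometric factors along the shifted Mellin contour, and is the step where Krein's symmetrization picture must be treated with care because $\Si_\alpha$ is unbounded on $L^2(\Gamma_\alpha)$, so the usual similarity calculation cannot be applied directly and must be justified in the transform domain.
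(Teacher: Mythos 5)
Your overall frame (Plemelj symmetry, the $ax+b$ Plancherel decomposition, Mellin symbols) is the right one, and your Step~1 symmetry computation is essentially Lemmas~\ref{lem:plemelj} and \ref{lem:KEdef}. But there are three genuine gaps. First, boundedness of $\K_\alpha$ on $\E(\Gamma_\alpha)$ does not follow ``by interpolation between the $L^{2,a}$-bounds'': $\E(\Gamma_\alpha)\simeq\dot H^{-1/2}(\Gamma_\alpha)$ lives in the Sobolev scale interpolating $L^2(\Gamma_\alpha)$ and $\dot H^{-1}(\Gamma_\alpha)$, not in the weighted scale $L^{2,a}(\Gamma_\alpha)$, and $\K_\alpha$ is not known to be bounded on $\dot H^{-1}$. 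The paper needs a separate symmetrization/iteration argument (Theorem~\ref{thm:Enormbound}) to show $\|\K_\alpha\|_{B(\E(\Gamma_\alpha))}\le\|\K_\alpha\|_{B(L^{2,2a-1}(\Gamma_\alpha))}$ for $0<a<1$ and then lets $a\to1$ to reach $|1-\alpha/\pi|$. Second, your Step~2 --- realising $\Si_\alpha$ as a \emph{positive scalar} Mellin multiplier so that $\K_\alpha|_{\E}$ becomes unitarily equivalent to a Mellin operator on the ``$\dot H^{-1/2}$ contour'' --- is precisely the approach the paper describes and then abandons: the Plancherel transform of the non-unimodular group $G$ produces \emph{operator-valued} multipliers acting on the Hilbert--Schmidt class, $\widetilde\Si_\alpha=\Si_\alpha\mathbb{V}_{-1}$ does not diagonalise to a scalar, and no commuting square root of $\widetilde\Si_\alpha$ is available to implement the similarity with $L^{2,\pm1}(\Gamma_\alpha)$. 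The actual proof of Theorem~\ref{thm:main2} sidesteps this by comparing the two self-adjoint realisations $\K_\alpha^\ast|_{\E_1}$ and $\K_\alpha^\ast|_{L^{2,-1}}$ indirectly, through polynomial functional calculus evaluated on a common dense invariant domain; without that (or the missing square root) your identification of the spectrum as the symbol range on a shifted contour is unjustified. (Also, the endpoints $\pm|1-\alpha/\pi|$ are the value of the $a=1$ symbol at $\xi=0$; the symbol tends to $0$ as $\xi\to\pm\infty$, the opposite of what you assert.)

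Third, the eigenfunction construction fails as stated: a separated function $x^{-(1+\varepsilon)/2-i\xi_0}\phi(z)$ satisfies
\begin{equation*}
\int_0^\infty \bigl|x^{-(1+\varepsilon)/2-i\xi_0}\bigr|^2\,x^{1+\varepsilon}\,dx=\int_0^\infty x^{2\operatorname{Im}\xi_0}\,dx=\infty,
\end{equation*}
so it never lies in $L^{2,1+\varepsilon}(dx\,dz)$, regardless of $\varepsilon$. In the paper the infinite multiplicity comes from a different mechanism: for $1<a<3$ the Mellin-type operator $T_{\alpha,a}^{\pm}-\lambda$ has Fredholm index $-1$ at interior points $\lambda$ of $\widehat\Sigma_{\alpha,a}$, hence $(T_{\alpha,a}^{\pm})^\ast$ has an eigenvector, and right multiplication on the Hilbert--Schmidt class (Proposition~\ref{prop:specmult}) converts a single such eigenvector into an infinite-dimensional kernel of $M_{T_{\alpha,a}^{\pm}}-\lambda$; combined with the monotonicity $\Sigma_{\alpha,1}\setminus\{0\}\subset\operatorname{int}\widehat\Sigma_{\alpha,1+\varepsilon}$ for $0<\varepsilon<2$, this gives the claimed eigenvalues of $\K_\alpha$ on $L^{2,1+\varepsilon}(\Gamma_\alpha)$ via Theorem~\ref{thm:main1}. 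Your constraint ``$0<\varepsilon<2$ keeps $1+\varepsilon$ in $(-1,3)$'' is necessary but not the operative point; what matters is that $1+\varepsilon>1$ puts $\lambda$ in the \emph{interior} of the larger curve so that the index is $-1$.
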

\begin{remark}
	Eigensolutions to the transmission problem \eqref{eq:trans1}-\eqref{eq:trans2}, $f = g = 0$, are given in \cite{WKS08}, for permissible parameters $\epsilon$. These eigensolutions $U$ are constructed by separation of variables, and are thus periodic in $z$. Hence they could not satisfy that $\partial_n^\pm U \in L^{2,a}(\Gamma_\alpha)$ for any $a \in \R$. The relationship between the eigenfunctions of Theorems~\ref{thmx:A} and \ref{thmx:B} and the eigensolutions to the transmission problem is interesting, but unclear. The qualitative behavior of solutions to problem \eqref{eq:trans1}-\eqref{eq:trans2}, when $\mre \epsilon < 0$, is of importance to the study of plasmonics, as it is related to effects of field enhancement and confinement in plasmonic structures \cite{VS13, YA18}.
\end{remark}
Theorem~\ref{thmx:B} yields the expected corollary for the transmission problem. The sufficiency of the condition in Corollary~\ref{corx:B} has been shown previously in \cite[Theorem~1.6]{GM90}, but we will give a rather different proof.
\begin{corx} \label{corx:B}
	Let $1 \neq \epsilon \in \mathbb{C}$ and $f \in \dot{H}^{1/2}(\Gamma_\alpha)$. Then the transmission problem {\rm (E)}
	is well posed (modulo constants) for all $g \in \E(\Gamma_{\alpha}) \simeq \dot{H}^{-1/2}(\Gamma_{\alpha})$ if and only if 
	$$
	\frac{1+\epsilon}{1-\epsilon} \notin [-|1 - \alpha/\pi|, |1 - \alpha/\pi|].
	$$
\end{corx}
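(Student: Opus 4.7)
The plan is to reduce Corollary~\ref{corx:B} to the spectral information for $\K_\alpha$ on $\E(\Gamma_\alpha)$ provided by Theorem~\ref{thmx:B}, via the single layer potential ansatz from the introduction. I seek a solution of problem~(E) in the form $U = \Si_\alpha h_+$ in $\Gamma_{\alpha, +}$ and $U = \Si_\alpha h_-$ in $\Gamma_{\alpha, -}$, with unknown charges $h_\pm$ in the energy space $\E(\Gamma_\alpha) \simeq \dot{H}^{-1/2}(\Gamma_\alpha)$. Continuity of the single layer across $\Gamma_\alpha$ gives $\Tr_\pm U = \Si_\alpha h_\pm$, so the trace condition becomes $\Si_\alpha(h_+ - h_-) = f$, while the jump formulas $\partial_n^{\pm} \Si_\alpha h = \tfrac{1}{2}(\pm h - \K_\alpha h)$ convert the condition on $\partial_n^+ U - \epsilon \partial_n^- U$ into
\begin{equation*}
(I - \K_\alpha) h_+ + \epsilon (I + \K_\alpha) h_- = 2 g.
\end{equation*}

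Next, the identification $\E(\Gamma_\alpha) \simeq \dot{H}^{-1/2}(\Gamma_\alpha)$ of Section~\ref{sec:energy} gives an isomorphism $\Si_\alpha \colon \dot{H}^{-1/2}(\Gamma_\alpha) \to \dot{H}^{1/2}(\Gamma_\alpha)$, so the first equation uniquely determines $k := h_+ - h_- = \Si_\alpha^{-1} f \in \E(\Gamma_\alpha)$. Substituting $h_+ = h_- + k$ into the second equation and using $\epsilon \neq 1$ reduces the system to the single equation
\begin{equation*}
(\lambda I - \K_\alpha) h_- = \frac{1}{1-\epsilon}\bigl( 2 g - (I - \K_\alpha) k \bigr), \qquad \lambda := \frac{1+\epsilon}{1-\epsilon}.
\end{equation*}
The right-hand side lies in $\E(\Gamma_\alpha)$, so unique solvability for $h_-$ in the energy space (and hence, together with $k$, for the ansatz pair $(h_+, h_-)$) for every admissible pair $(f, g)$ is equivalent to invertibility of $\lambda I - \K_\alpha$ on $\E(\Gamma_\alpha)$. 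By Theorem~\ref{thmx:B}, this is in turn equivalent to $\lambda \notin [-|1 - \alpha/\pi|, |1 - \alpha/\pi|]$.

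To turn this equivalence into well-posedness of (E) modulo constants, it remains to verify that every finite-energy solution $U$ is captured by the ansatz up to additive constants on the two components. Given such $U$, one strips away appropriate constants from $\Tr_\pm U$ so that the remainders lie in $\dot{H}^{1/2}(\Gamma_\alpha)$, defines $h_\pm \in \E(\Gamma_\alpha)$ by inverting $\Si_\alpha$, and invokes uniqueness for the finite-energy Dirichlet problem in each wedge component to identify $U$ with $\Si_\alpha h_\pm$ modulo constants; the trace jump condition forces these constants to coincide, accounting for the ``modulo constants'' qualifier. I expect this completeness step to be the main obstacle, since the unbounded geometry of $\Gamma_\alpha$ and the edge singularity along the $z$-axis demand careful justification of Green's identity and control of behaviour both at infinity and near the edge, whereas the algebraic reduction above and the appeal to Theorem~\ref{thmx:B} are routine.
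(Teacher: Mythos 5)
Your proposal is correct and follows essentially the same route as the paper: the single layer ansatz, the jump formulas extended to $\E(\Gamma_\alpha)$, reduction to the system $\Si_\alpha(h_+-h_-)=f$ plus a resolvent equation for $\K_\alpha$, and an appeal to the isomorphism $\Si_\alpha\colon\dot{H}^{-1/2}(\Gamma_\alpha)\to\dot{H}^{1/2}(\Gamma_\alpha)$ together with Theorem~\ref{thmx:B}. The completeness step you flag as the main obstacle is precisely what Corollary~\ref{cor:dirichletE} supplies (well-posedness of the finite-energy Dirichlet problem in each component, with every solution a single layer potential), so no further work is needed there.
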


The paper is laid out as follows. In Section~\ref{sec:conv} we recall the convolution structure of $\K_{\alpha}$ and the harmonic analysis of the $ax+b$ group. Section~\ref{sec:L2} is devoted to proving Theorem~A. In Section~\ref{sec:energy} we identify the energy space $\E(\Gamma)$ with a homogeneous Sobolev space, and in Section~\ref{sec:Espec} we prove Theorem~B.

\subsection*{Acknowledgment} The author thanks Johan Helsing, Anders Karlsson, and Tobias Kuna for helpful discussions. The author is also grateful to the American Institute of Mathematics, San Jose, USA and the Erwin Schr\"odinger Institute, Vienna, Austria, in each of which some of this work was prepared.
\section{Convolution structure of layer potentials on the wedge} \label{sec:conv}
\subsection{Computations for the wedge}
Recall, for $0 < \alpha < 2\pi$, $\alpha \neq \pi$, that the wedge $\Gamma_{\alpha,+}$ has boundary
$$\Gamma_\alpha = \partial \Gamma_{\alpha, +} = \{(x,0,z) \in \R^3 \, : \, x \geq 0\} \cup \{(x\cos \alpha, x\sin \alpha,z) \in \R^3 \, : \, x \geq 0\}.$$
We write $L^{2,a}(dx \, dz) = L^{2}(\R_+ \times \R, x^a \, dx \, dz)$, so that
\begin{equation} \label{eq:orthdecomp}
L^{2,a}(\Gamma_{\alpha}) = L^{2,a}(dx \, dz) \oplus L^{2,a}(dx \, dz).
\end{equation}
The layer potential operator $\K_\alpha \colon  L^{2,a}(\Gamma_\alpha) \to L^{2,a}(\Gamma_\alpha)$ is, with respect to the orthogonal decomposition \eqref{eq:orthdecomp}, given by
\begin{equation} \label{eq:Kmatrixdecomp}
\K_\alpha = \begin{pmatrix} 
0 & K_\alpha \\
K_\alpha & 0 
\end{pmatrix},
\end{equation}
where, for appropriate functions $f \in L^{2,a}(dx \, dz)$ and $x > 0$, $z \in \R$, 
\begin{equation} \label{eq:Kformula}
K_\alpha f(x,z) = -\frac{1}{2\pi} \int_{-\infty}^{\infty} \int_0^\infty \frac{u\sin \alpha}{(x^2 - 2xu \cos \alpha +  u^2 + (z - v)^2)^{3/2}} f(u, v) \, du \, dv.
\end{equation}
As observed in \cite{FJL77, Qiao18}, through the change of variables
$$ \begin{cases}
u = x/s, \\
v = z - xt/s, \end{cases} \quad \frac{\partial(u,v)}{\partial(s,t)} = \frac{x^2}{s^3},$$
 we obtain that
\begin{equation} \label{eq:Kformula2}
K_\alpha f(x,z) = -\frac{\sin \alpha}{2\pi}  \int_{-\infty}^{\infty} \int_0^\infty \frac{1}{(1+ s^2 - 2s \cos \alpha + t^2)^{3/2}} f(x/s, z - xt/s) \frac{ds}{s} \, dt.
\end{equation}
It turns out that $\K_\alpha \colon L^{2,a}(\Gamma_\alpha) \to L^{2,a}(\Gamma_\alpha)$ is bounded for $-1 < a < 3$, see Lemma~\ref{lem:l2norm}. Thus, by duality, the double layer potential defines a bounded operator $\K_\alpha^\ast \colon L^{2,-a}(\Gamma_\alpha) \to L^{2,-a}(\Gamma_\alpha)$ for such $a$. Note here the convention of this paper; unless otherwise indicated, adjoint operations and dual spaces are calculated with respect to the inner product of $L^{2} = L^{2,0}$. 

In the present situation, as a map of functions on the unbounded graph $\Gamma_{\alpha}$, $\Si_\alpha \colon L^2(\Gamma_\alpha) \to L^2(\Gamma_\alpha)$ is not a bounded operator. However, it is densely defined, see Lemma~\ref{lem:Sdensedef}. In Lemma~\ref{lem:mapping} we will find that $\Si_\alpha$ can also be understood as a bounded map between certain weighted $L^p$-spaces.
As for $\K_{\alpha}$, the single layer potential can be formally written
$$\Si_\alpha = \begin{pmatrix} 
S_0 & S_\alpha \\
S_\alpha & S_0 
\end{pmatrix},$$
where
\begin{align} \label{eq:salphaformula}
S_\alpha f(x,z) &= \frac{1}{4\pi}  \int_{-\infty}^{\infty} \int_0^\infty \frac{1}{(x^2 - 2xu \cos \alpha  +  u^2 + (z - v)^2)^{1/2}} f(u, v) \, du \, dv \\ \nonumber
&= \frac{1}{4\pi} \int_{-\infty}^{\infty} \int_0^\infty \frac{x}{s(1+ s^2 - 2s \cos \alpha  + t^2)^{1/2}} f(x/s, z - xt/s)  \frac{ds}{s} \, dt.
\end{align}

\subsection{Convolution structure and harmonic analysis} \label{sec:group}
Consider the matrix group 
$$G = \left \{(x,z) = \begin{pmatrix} 
x & z \\
0 & 1
\end{pmatrix}\, : \, x > 0, \; z \in \R\right \},$$ 
in which multiplication corresponds to the composition of affine maps $w \mapsto xw + z$. That is,
$$(x,z)\cdot (s,t) = (xs, xt+z),$$
and 
$$(x,z) \cdot (s,t)^{-1} = (x,z) \cdot (1/s, -t/s) = (x/s, z - xt/s).$$
We always equip the group $G$ with its right Haar-measure $\frac{dx}{x} \, dz$. $G$ is a non-unimodular group; its left-invariant Haar measure is $\frac{dx}{x^2} \, dz$ and the Haar modulus is therefore $\Delta = \Delta(x,z) = x^{-1}$.

The connection between $G$ and $\K_\alpha$ is clear; $K_\alpha$ can be interpreted as a convolution operator, $K_\alpha f = f \star k_\alpha$, where
$$f \star g(x,z) = \int_{-\infty}^{\infty} \int_0^\infty f( (x,z) \cdot (s,t)^{-1}) g(s,t) \, \frac{ds}{s} \, dt.$$
Although we shall never make use of this, we point out that the convolution of $f$ and $g$ can also be computed with respect to the left structure of $G$, 
$$f \star g(x,z) = \int_{-\infty}^{\infty} \int_0^\infty f(s,t) g( (s,t)^{-1} \cdot (x,z) ) \, \frac{ds}{s^2} \, dt.$$
We will need Young's inequality for non-unimodular groups \cite[Lemma~2.1]{KR78}, stated for the right Haar measure.
\begin{lem} \label{lem:young}
Suppose that $1 \leq p,q,r \leq \infty$ satisfy $\frac{1}{p} + \frac{1}{q} = 1 + \frac{1}{r}$, and that $f \in L^p(G)$ and $g \in L^q(G)$. Then
$$\|f\Delta^{-\frac{1}{q'}} \star g\|_{L^r(G)} \leq \|f\|_{L^p(G)} \|g\|_{L^q(G)},$$
where $\frac{1}{q} + \frac{1}{q'} = 1.$
\end{lem}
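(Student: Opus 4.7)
The plan is to apply three-factor Hölder's inequality directly to the pointwise estimate
$$|(f\Delta^{-1/q'}) \star g(a)| \leq \int_G |f(ah^{-1})| \Delta(ah^{-1})^{-1/q'} |g(h)| \, \frac{ds}{s}\, dt,$$
placing the modular factor in the right place so it cancels against the Jacobian of a change of variables. The hypothesis $1/p+1/q = 1+1/r$ is equivalent to $1/r + 1/q' + 1/p' = 1$, so Hölder with exponents $(r, q', p')$ is admissible. I will also use two structural facts for the right Haar measure: the change of variable $b = ah^{-1}$ (with $a$ fixed) transforms $d\mu_R(h)$ into $\Delta(b)\, d\mu_R(b)$, while right translation $a \mapsto ah^{-1}$ leaves $d\mu_R(a)$ invariant; both can be verified directly from the explicit formulas on the $ax+b$ group.

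The heart of the proof is the factorization $|f(ah^{-1})| |g(h)| \Delta(ah^{-1})^{-1/q'} = A(h)B(h)C(h)$ with
$$A(h) = |f(ah^{-1})|^{p/r} |g(h)|^{q/r}, \quad B(h) = |f(ah^{-1})|^{1-p/r} \Delta(ah^{-1})^{-1/q'}, \quad C(h) = |g(h)|^{1-q/r}.$$
The two identities $1 - p/r = p/q'$ and $1 - q/r = q/p'$, both consequences of $1/p + 1/q - 1/r = 1$, are the key: they turn $B^{q'}$ and $C^{p'}$ into clean powers $|f(ah^{-1})|^{p}\Delta(ah^{-1})^{-1}$ and $|g(h)|^{q}$. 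Hölder then yields
$$|(f\Delta^{-1/q'}) \star g(a)| \leq \bigl(\textstyle\int A^{r}\bigr)^{1/r}\bigl(\textstyle\int B^{q'}\bigr)^{1/q'}\bigl(\textstyle\int C^{p'}\bigr)^{1/p'}.$$

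The central cancellation occurs in $\int B^{q'}\, d\mu_R(h) = \int |f(ah^{-1})|^{p} \Delta(ah^{-1})^{-1}\, d\mu_R(h)$: after substituting $b = ah^{-1}$, the Jacobian $\Delta(b)$ precisely annihilates $\Delta(b)^{-1}$, leaving $\|f\|_p^{p}$, which is independent of $a$. Similarly $\int C^{p'}\, d\mu_R(h) = \|g\|_q^{q}$. The remaining factor $\int A^{r}\, d\mu_R(h)$ still depends on $a$, but Fubini and right-invariance yield $\iint A^{r}\, d\mu_R(h)\, d\mu_R(a) = \|f\|_p^{p}\|g\|_q^{q}$. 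Raising the Hölder bound to the $r$-th power, integrating in $a$, and summing exponents ($rp/q' + p = r$ and $rq/p' + q = r$) produces $\|(f\Delta^{-1/q'})\star g\|_{r}^{r} \leq \|f\|_{p}^{r}\|g\|_{q}^{r}$.

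The only subtlety — rather than a genuine obstacle — is the placement of $\Delta^{-1/q'}$ in the Hölder decomposition. Assigning it to $A$ or $C$ produces leftover weights that do not cancel; placing it in $B$ is essentially forced by the requirement that after Hölder exponent $q'$ the modular factor becomes $\Delta^{-1}$, exactly offsetting the modulus of non-unimodularity picked up by the substitution $b = ah^{-1}$. This single cancellation is the whole reason the $\Delta^{-1/q'}$ twist appears in the statement.
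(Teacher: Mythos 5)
Your proof is correct. There is nothing in the paper to compare it against: the lemma is quoted from \cite[Lemma~2.1]{KR78} and no proof is given, so your argument is a self-contained verification of an imported result. It is the standard three-exponent H\"older proof of Young's inequality, correctly adapted to the right Haar measure and the modular twist. All the computations check out: $1/r+1/q'+1/p'=1$ is equivalent to the hypothesis; the identities $1-p/r=p/q'$ and $1-q/r=q/p'$ follow from $1/p+1/q-1/r=1$; the substitution $b=ah^{-1}$ really does carry the Jacobian $\Delta(b)$ (explicitly, with $a=(x,z)$, $h=(s,t)$, one has $u=x/s$, $v=z-xt/s$, so $\frac{du\,dv}{u}=\frac{x}{s}\cdot\frac{ds\,dt}{s}$ and $x/s=u=\Delta(b)^{-1}$), which cancels the $\Delta(b)^{-1}$ produced by raising $B$ to the power $q'$; right translation $a\mapsto ah^{-1}$ preserves $\frac{dx}{x}\,dz$, so Fubini gives $\|f\|_p^p\|g\|_q^q$ for the remaining factor; and the exponent bookkeeping $rp/q'+p=r$, $rq/p'+q=r$ closes the argument. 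The only points left implicit are the degenerate endpoints (one of $p,q,r$ equal to $1$ or $\infty$), where one or more H\"older factors collapse to an $L^1$ or $L^\infty$ factor --- e.g.\ $q=1$ gives $q'=\infty$, the twist disappears, and the inequality reduces to the two-factor case actually used in Lemma~\ref{lem:l2norm}; these follow from simpler versions of the same computation and deserve a sentence.
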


The group $G$ was the first example of a non-unimodular group carrying a complete, explicit, harmonic analysis \cite{ET79, Khal74, KL72}. We shall now recall the main features. The reader should be warned that the statements below have been adapted to the right-invariant structure of $G$, while most of the references given treat the left structure. 

The construction is helped by the fact that $G = \R \rtimes \R_+$ is a semi-direct product of the two abelian groups $\R$ and $\R_+$, each of which comes with its own standard Fourier analysis. On $\R$ we have the usual Fourier transform $\mathcal{F}$, 
$$\mathcal{F}f(\xi) = \int_{-\infty}^\infty f(x) e^{-2\pi i x \xi} \, dx, \quad f\in L^1(\R), \; \xi \in \R,$$
which extends to a unitary map $\mathcal{F} \colon L^2(\R) \to L^2(\R)$. On $\R_+$, equipped with its Haar measure $\frac{dx}{x}$, the corresponding Fourier transform is known as the Mellin transform $\mathcal{M}$,
$$\mathcal{M}f(\xi) = \int_0^\infty f(x) x^{i\xi} \frac{dx}{x}, \quad f\in L^1(\R_+, \frac{dx}{x}), \; \xi \in \R.$$
Up to a constant scaling factor, $\mathcal{M}$ extends to a unitary $\mathcal{M} \colon L^2(\R_+) \to L^2(\R)$, where $L^2(\R_+) = L^2(\R_+, \frac{dx}{x})$.

The group $G$ has two infinite-dimensional irreducible unitary representations $\pi_\pm$ on $L^2(\R_+) = L^2(\R_+, \frac{dx}{x})$ \cite{GM47}, 
$$\pi_\pm(x,z)\eta(r) = e^{\mp 2\pi i z r} \eta(xr), \quad  \eta \in L^2(\R_+), \: r \in \R_+.$$
The unitary representations yield corresponding transforms $\F_{\pm}$. For $f \in L^1(G)$, $\F_{\pm}(f) \colon L^2(\R_+) \to L^2(\R_+)$ is the bounded operator given by
\begin{align*}
\F_\pm(f) \eta(r) &= \int_{-\infty}^\infty \int_0^\infty f(x, z) \pi_{\pm}(x,z) \eta(r)  \, \frac{dx}{x} \, dz \\
&= \int_{-\infty}^\infty \int_0^\infty e^{\mp 2\pi i z r} \eta(x r) f(x, z) \, \frac{dx}{x} \, dz, \quad \eta \in L^2(\R_+), \; f \in L^1(G).
\end{align*}
However, due to the non-unimodularity of $G$, it is not possible to immediately obtain a Plancherel theorem in terms of $\F_{\pm}$. In fact, there are compactly supported continuous $f$ for which $\F_\pm(f)$ is not even compact \cite{Khal74}. However, it is possible to obtain a Plancherel theorem by introducing an operator correction factor \cite{DM76, Fuhr06}.

In our case, the correction factor is given by $\delta$, where $\delta \eta(r) = \sqrt{r} \eta(r)$. Consider for $f \in L^2(G)$ the pair of operators $\PP_\pm(f) \colon L^2(\R_+) \to L^2(\R_+)$, formally given by $\PP_{\pm}(f) = \delta \F_\pm(f)$. More precisely,
$$\PP_\pm(f) \eta(r) = \sqrt{r}  \int_{-\infty}^\infty \int_0^\infty e^{\mp 2\pi i z r} \eta(x r) f(x, z) \frac{dx}{x} \, dz, \quad f \in L^2(G), \; \eta \in L^2(\R_+).$$
It is straightforward to verify that $\PP_{\pm}(f) \in \mathcal{S}_2$ for $f \in L^2(G)$, where $\mathcal{S}_2 = \mathcal{S}_2(L^2(\R_+))$ is the class of Hilbert-Schmidt operators on $L^2(\R_+)$. The ``Fourier transform'' of $L^2(G)$ is given by $\PP = (\PP_-, \PP_+)$, acting as a unitary map of $L^2(G)$ onto $\mathcal{S}_2^{(2)} = \mathcal{S}_2 \oplus \mathcal{S}_2$.
\begin{prop}[\cite{Khal74}]
The map $\PP \colon L^2(G) \to \mathcal{S}_2^{(2)}$ is onto and an isometry,
$$\|f\|_{L^2(G)}^2 = \|\PP_- f\|_{\mathcal{S}_2}^2 + \|\PP_+ f\|_{\mathcal{S}_2}^2.$$    
\end{prop}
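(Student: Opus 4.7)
The plan is to reduce the Plancherel identity for $G$ to the classical one-dimensional Plancherel theorem by explicitly computing the Hilbert--Schmidt kernel of each $\PP_\pm(f)$. Let $\mathcal{F}_z f(x,\xi) = \int_{\R} f(x,z) e^{-2\pi i z\xi}\,dz$ denote the partial Fourier transform in the $z$-variable.

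First I would rewrite $\PP_\pm(f)$ as an integral operator on $L^2(\R_+, \tfrac{dr}{r})$. Substituting $y = xr$ at fixed $r$ (so that $\tfrac{dx}{x} = \tfrac{dy}{y}$) in the defining formula for $\PP_\pm(f)\eta(r)$ and swapping the order of integration yields
\[
\PP_\pm(f)\eta(r) = \int_0^\infty K_\pm(r,y)\,\eta(y)\,\frac{dy}{y}, \qquad K_\pm(r,y) = \sqrt{r}\,(\mathcal{F}_z f)(y/r,\pm r).
\]
The Hilbert--Schmidt norm of such an operator equals the $L^2(\tfrac{dr}{r}\otimes\tfrac{dy}{y})$-norm of $K_\pm$. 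The factor $r$ produced by $|\sqrt{r}|^2$ cancels one copy of $\tfrac{1}{r}$ in the measure, and a second substitution $x = y/r$ collapses the expression to
\[
\|\PP_\pm(f)\|_{\mathcal{S}_2}^2 = \int_0^\infty \int_0^\infty |(\mathcal{F}_z f)(x,\pm r)|^2\, dr\,\frac{dx}{x}.
\]
Summing the $+$ and $-$ contributions merges the inner integrals into a single integration over $\R$ in the Fourier variable, and applying one-dimensional Plancherel in $z$ at each fixed $x$ produces $\int_{\R} |f(x,z)|^2\,dz$; integrating against $\tfrac{dx}{x}$ gives exactly $\|f\|_{L^2(G)}^2$. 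The formal manipulations above are justified first for $f$ in a dense subclass such as $C_c(G)$, and the full identity follows by continuity.

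For the surjectivity part of the proposition, I would invert the construction. Given $(A_-, A_+) \in \mathcal{S}_2^{(2)}$ with integral kernels $K_\pm(r,y)$ with respect to $\tfrac{dr}{r}\otimes\tfrac{dy}{y}$, define
\[
g(x,\xi) = \begin{cases} K_+(\xi,\, x\xi)/\sqrt{\xi}, & \xi > 0, \\ K_-(-\xi,\,-x\xi)/\sqrt{-\xi}, & \xi < 0, \end{cases}
\]
and take $f = \mathcal{F}_z^{-1} g$; running the substitutions in reverse shows $g \in L^2(\R_+ \times \R,\,\tfrac{dx}{x}\,d\xi)$ and hence $f \in L^2(G)$, with $\PP_\pm(f) = A_\pm$ by construction.

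The proof thus amounts to explicit substitutions; the conceptual point is that the Duflo--Moore correction $\delta\eta(r) = \sqrt{r}\,\eta(r)$ is introduced precisely so that the factor of $r$ it produces cancels the non-unimodular discrepancy between the right Haar measure $\tfrac{dx}{x}\,dz$ on $G$ and Lebesgue measure on the Fourier dual. The main hurdle is not conceptual but notational: carefully tracking the sign conventions that tie the two representations $\pi_\pm$ to the positive and negative halves of the Fourier line $\R$, and confirming that Fubini and the changes of variable are legitimate on the chosen dense subclass.
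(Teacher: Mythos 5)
Your argument is correct, but note that the paper does not prove this proposition at all --- it is quoted from \cite{Khal74} --- so there is no internal proof to compare against; what you have produced is a self-contained verification. Your computation checks out: with $K_\pm(r,y)=\sqrt{r}\,(\mathcal{F}_zf)(y/r,\pm r)$ the Hilbert--Schmidt norm on $L^2(\R_+,\tfrac{dr}{r})$ is the $L^2(\tfrac{dr}{r}\otimes\tfrac{dy}{y})$-norm of the kernel, the factor $r=|\sqrt{r}|^2$ converts $\tfrac{dr}{r}$ into $dr$, the substitution $x=y/r$ restores $\tfrac{dx}{x}$, and the two signs tile the full Fourier line so that one-dimensional Plancherel in $z$ finishes the isometry. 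The surjectivity construction is the same computation run backwards and is likewise sound, using the standard identification of $\mathcal{S}_2$ with $L^2$ kernels. Two points deserve explicit care if this were to be written out in full: (i) the defining iterated integral for $\PP_\pm(f)\eta(r)$ need not converge absolutely for general $f\in L^2(G)$, so the identification of $\PP_\pm(f)$ with the integral operator of kernel $K_\pm$ must be made on a dense class (e.g.\ $C_c(G)$, or $f$ with $f(x,\cdot)\in L^1\cap L^2$ for a.e.\ $x$) and then extended by the isometry itself --- you flag this, which is enough; (ii) your formula for $g$ on the set $\xi=0$ is irrelevant since it has measure zero, but it is worth saying so. As a side benefit, your proof makes transparent the remark in the paper that $\PP_\pm(f)\in\mathcal{S}_2$ for every $f\in L^2(G)$ (stated there as ``straightforward to verify''), and it exhibits the precise role of the Duflo--Moore correction $\delta$: without the $\sqrt{r}$ the kernel would carry the weight $\tfrac{dr}{r}$ rather than $dr$ and the $L^2$ bound would fail, which is exactly the phenomenon behind the non-compact $\F_\pm(f)$ mentioned in the text.
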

Due to the correction factor, the convolution theorem is slightly asymmetrical.
\begin{prop}[\cite{Khal74}] \label{prop:conv}
If $k \in L^1(G)$ and $f \in L^2(G)$, then
$$\PP_\pm(f \star k) = \PP_{\pm}(f)\F_{\pm}(k).$$
\end{prop}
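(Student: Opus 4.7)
The plan is to verify the identity first on the dense subspace $L^1(G) \cap L^2(G)$ by a direct Fubini computation exploiting the right-invariance of Haar measure, and then to pass to arbitrary $f \in L^2(G)$ by a density argument. The formal motivation is $\PP_\pm = \delta \F_\pm$: the ordinary unmodified convolution theorem $\F_\pm(f \star k) = \F_\pm(f)\F_\pm(k)$ already holds for $f, k \in L^1(G)$, and since $\delta$ is applied only once, on the left, the correction factor does not appear on the right factor of the product. This is precisely the asymmetry of the statement.

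First I would observe that for $k \in L^1(G)$ the operator $\F_\pm(k) \colon L^2(\R_+) \to L^2(\R_+)$ is bounded with norm at most $\|k\|_{L^1(G)}$. Since each $\pi_\pm(x,z)$ acts isometrically on $L^2(\R_+) = L^2(\R_+, \tfrac{dr}{r})$, as is verified by the substitution $u = xr$, this follows at once from Minkowski's integral inequality applied to $\F_\pm(k)\eta = \int_G k(g)\pi_\pm(g)\eta \, d\mu(g)$, where $d\mu = \tfrac{dx}{x}\,dz$. Next, for $f \in L^1 \cap L^2$ and $k \in L^1$, substituting $(f \star k)(g) = \int_G f(gh^{-1})k(h)\,d\mu(h)$ into the definition of $\F_\pm(f \star k)\eta$ produces a double integral which, for bounded compactly supported $\eta$ and fixed $r$, is dominated by $\|\eta\|_\infty \|f\|_{L^1}\|k\|_{L^1}$, so Fubini applies. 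Swapping order and making the substitution $g \mapsto gh$ in the inner integral (legal by right-invariance), together with the homomorphism property $\pi_\pm(gh) = \pi_\pm(g)\pi_\pm(h)$, yields
\begin{equation*}
\F_\pm(f \star k)\eta = \int_G k(h)\,\F_\pm(f)\bigl[\pi_\pm(h)\eta\bigr] \, d\mu(h) = \F_\pm(f)\,\F_\pm(k)\,\eta.
\end{equation*}
Multiplying by $\delta$ on the left gives $\PP_\pm(f \star k) = \PP_\pm(f)\F_\pm(k)$ on a dense set of $\eta$, hence on all of $L^2(\R_+)$.

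For the extension to arbitrary $f \in L^2(G)$, I would choose $f_n \in L^1(G) \cap L^2(G)$ with $f_n \to f$ in $L^2(G)$. Young's inequality in the form of Lemma~\ref{lem:young} with $p = r = 2$ and $q = 1$ (so that $q' = \infty$ and the modulus factor $\Delta^{-1/q'}$ trivialises) gives $\|f_n \star k - f \star k\|_{L^2(G)} \leq \|f_n - f\|_{L^2(G)}\|k\|_{L^1(G)} \to 0$. The Plancherel isometry of the preceding proposition then furnishes $\PP_\pm(f_n \star k) \to \PP_\pm(f \star k)$ and $\PP_\pm(f_n) \to \PP_\pm(f)$ in $\mathcal{S}_2$, and since right multiplication by the bounded operator $\F_\pm(k)$ is continuous on $\mathcal{S}_2$, we also have $\PP_\pm(f_n)\F_\pm(k) \to \PP_\pm(f)\F_\pm(k)$ in $\mathcal{S}_2$. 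Passing to the limit in the identity obtained in the previous step completes the proof. I expect the only subtle point to be the Fubini manipulation in the $L^1 \cap L^2$ case — specifically, isolating a dense class of $\eta$ and a scheme of truncations under which all interchanges of integration are legitimate; once that is settled, the density extension is routine.
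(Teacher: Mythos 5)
Your proof is correct. The paper does not prove this proposition at all --- it is quoted from \cite{Khal74} --- so there is no internal argument to compare against; what you have written is the standard proof and it is complete. All the ingredients check out against the conventions of the paper: $\frac{dx}{x}\,dz$ is indeed right-invariant for $(x,z)\cdot(s,t)=(xs,xt+z)$, each $\pi_\pm(x,z)$ is unitary on $L^2(\R_+,\frac{dr}{r})$ and $\pi_\pm$ is a homomorphism, so the substitution $g\mapsto gh$ and the factorisation $\pi_\pm(gh)=\pi_\pm(g)\pi_\pm(h)$ go through; the only step you leave slightly implicit is pulling the bounded operator $\F_\pm(f)$ (norm $\leq\|f\|_{L^1(G)}$, which is why the restriction to $L^1\cap L^2$ is needed) outside the Bochner integral $\int_G k(h)\,\pi_\pm(h)\eta\,d\mu(h)$, which is standard. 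The passage to general $f\in L^2(G)$ via the Plancherel isometry and the $\mathcal{S}_2$-continuity of right multiplication by $\F_\pm(k)$ is exactly as it should be, and correctly explains the asymmetry of the statement (the correction factor $\delta$ sits on the left and never touches the factor $\F_\pm(k)$).
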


For $\gamma \in \R$, we let 
\begin{equation} \label{eq:Vdef}
V_{\gamma} f(x,z) = \Delta^{-\gamma} f(x,z) = x^{\gamma} f(x,z).
\end{equation}
Note that $f \in L^{2,a}(dx \, dz)$ if and only if $V_{\frac{a+1}{2}}f \in L^2(G)$, where $a\in\R$,  By the formula
$$\PP_{\pm}(f)\eta = \delta^{a+1}\PP_{\pm}(V_{\frac{a+1}{2}}f)[\delta^{-a-1}\eta],$$
valid at first for $f$ compactly supported in $G$, we can extend $\PP_{\pm}$ to $L^{2,a}(dx \, dz)$, in such a way that $\PP_{\pm}(f) \colon L^2(\R_+, r^{-a-2} \, dr) \to  L^2(\R_+, r^{-a-2} \, dr)$ is bounded when $f \in L^{2,a}(dx \, dz)$. Similarly, we interpret $\F_{\pm}(k)$ as a bounded operator on $L^2(\R_+, r^{-a-2} \, dr)$ for functions $k$ on $G$ for which $V_{\frac{a+1}{2}}k \in L^1(G)$. Note also that $f \star k \in L^{2,a}(dx \, dz)$ in this situation, by Young's inequality. For easy reference, we summarize what has been said in a lemma.
\begin{lem} \label{lem:convdensdef}
If $f \in L^{2,a}(dx \, dz)$ and $V_{\frac{a+1}{2}}k \in L^1(G)$, then 
$$\PP_{\pm}(f), \PP_{\pm}(f \star k), \F_{\pm}(k) \colon L^2(\R_+, r^{-a-2} \, dr) \to  L^2(\R_+, r^{-a-2} \, dr)$$
are bounded operators, and the convolution formula
 $$\PP_\pm(f \star k) = \PP_{\pm}(f)\F_{\pm}(k)$$
 is valid.
\end{lem}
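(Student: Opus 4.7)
The strategy is to reduce the weighted claims to the unweighted Plancherel theorem (for functions in $L^2(G)$) and the convolution formula (Proposition~\ref{prop:conv}) by conjugating with powers of the multiplication operator $\delta$. A short computation shows that $\|\delta^{-(a+1)}\eta\|_{L^2(\R_+)}^2 = \int_0^\infty |\eta(r)|^2 r^{-a-2}\,dr$, so $\delta^{-(a+1)} \colon L^2(\R_+, r^{-a-2}\,dr) \to L^2(\R_+)$ is a unitary isomorphism with inverse $\delta^{a+1}$. The boundedness of $\PP_\pm(f)$ on $L^2(\R_+, r^{-a-2}\,dr)$ then follows immediately from the defining formula $\PP_\pm(f) = \delta^{a+1}\PP_\pm(V_{(a+1)/2}f)\delta^{-(a+1)}$, because $V_{(a+1)/2}f \in L^2(G)$ and $\PP_\pm$ sends $L^2(G)$ boundedly into $\mathcal{S}_2 \subset B(L^2(\R_+))$.

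For $\F_\pm$, I would verify the analogous intertwining identity
$$\F_\pm(k) = \delta^{a+1}\F_\pm(V_{(a+1)/2}k)\delta^{-(a+1)}$$
by substituting $\eta = \delta^{a+1}\zeta$ with $\zeta \in L^2(\R_+)$ into the defining integral for $\F_\pm(k)\eta(r)$ and factoring $(xr)^{(a+1)/2} = x^{(a+1)/2} r^{(a+1)/2}$ inside $\eta(xr) = (xr)^{(a+1)/2}\zeta(xr)$. The factor $x^{(a+1)/2}$ is absorbed into $k$ to produce $V_{(a+1)/2}k$, while $r^{(a+1)/2}$ pulls out as $\delta^{a+1}$. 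Since $\|\F_\pm(h)\|_{B(L^2(\R_+))} \leq \|h\|_{L^1(G)}$ for any $h \in L^1(G)$ ($\pi_\pm$ being a unitary representation), and $V_{(a+1)/2}k \in L^1(G)$, this yields the boundedness of $\F_\pm(k)$ on $L^2(\R_+, r^{-a-2}\,dr)$.

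For the boundedness of $\PP_\pm(f \star k)$, the key observation is the multiplicativity of $V_\gamma$ under convolution,
$$V_\gamma(f \star k) = (V_\gamma f) \star (V_\gamma k),$$
which is a one-line consequence of the right-convolution formula $(f \star k)(x,z) = \int f(x/s, z - xt/s) k(s,t)\,\tfrac{ds}{s}\,dt$ combined with the factorization $x^\gamma = (x/s)^\gamma s^\gamma$. Setting $\gamma = (a+1)/2$ and applying Young's inequality (Lemma~\ref{lem:young} with $p = 2$, $q = 1$, $r = 2$, in which case the correction factor $\Delta^{-1/q'}$ disappears), I obtain $V_{(a+1)/2}(f \star k) \in L^2(G)$, so $f \star k \in L^{2,a}(dx\,dz)$, and the boundedness of $\PP_\pm(f \star k)$ follows as in the first step.

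Finally, the convolution formula is obtained by chaining the three identities. Multiplicativity of $V_{(a+1)/2}$ followed by Proposition~\ref{prop:conv} applied to the pair $(V_{(a+1)/2}f, V_{(a+1)/2}k) \in L^2(G) \times L^1(G)$ gives
$$\PP_\pm(f \star k) = \delta^{a+1}\PP_\pm(V_{(a+1)/2}f)\F_\pm(V_{(a+1)/2}k)\delta^{-(a+1)},$$
and inserting $\delta^{-(a+1)}\delta^{a+1} = I$ between the two middle factors and invoking the intertwining identities produces $\PP_\pm(f)\F_\pm(k)$, as required. I foresee no serious obstacle: the only care needed is to justify the intertwining identity for $\F_\pm$ first on a dense class where Fubini's theorem applies (for instance, $k$ compactly supported in $G$), then extend by continuity using the norm bounds just established.
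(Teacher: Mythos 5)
Your proposal is correct and follows essentially the same route as the paper, which establishes the lemma by the conjugation identity $\PP_{\pm}(f) = \delta^{a+1}\PP_{\pm}(V_{\frac{a+1}{2}}f)\delta^{-a-1}$ (first for compactly supported $f$, then by continuity), the analogous identity for $\F_{\pm}$, and Young's inequality for $f \star k$; your write-up simply supplies the details, including the multiplicativity $V_\gamma(f\star k) = (V_\gamma f)\star(V_\gamma k)$, that the paper leaves implicit.
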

\subsection{Multiplication operators}
By Proposition~\ref{prop:conv} we are led to consider multiplication operators on the Hilbert-Schmidt class $\mathcal{S}_2 = \mathcal{S}_2(\mathcal{H})$ of an infinite-dimensional Hilbert space $\mathcal{H}$ with norm $\|\cdot\|$. For a bounded operator $T \colon \mathcal{H} \to \mathcal{H}$ we denote by $M_T \colon \mathcal{S}_2 \to \mathcal{S}_2$ the operator of multiplication by $T$ on the right,
$$M_T S = ST, \quad S \in \mathcal{S}_2.$$
The following proposition is surely known.
\begin{prop} \label{prop:specmult}
We have that $\|M_T\|_{B(\mathcal{S}_2)} = \|T\|_{B(\mathcal{H})}$, $M_T^\ast = M_{T^\ast}$, and 
$$\sigma(M_T, \mathcal{S}_2) = \sigma_{\ess}(M_T, \mathcal{S}_2) = \sigma(T, \mathcal{H}).$$
Furthermore, if $\overline{\lambda}$ is an eigenvalue of $T^\ast$, then $\lambda$ is an eigenvalue of $M_T$ of infinite multiplicity.
\end{prop}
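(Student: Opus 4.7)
My plan is to verify each assertion by exploiting the natural action of rank-one operators on $\mathcal{S}_2$ together with the ideal structure. For the norm identity, the upper bound $\|M_T\|_{B(\mathcal{S}_2)} \le \|T\|_{B(\mathcal{H})}$ is the standard Hilbert--Schmidt ideal estimate $\|ST\|_{\mathcal{S}_2} \le \|S\|_{\mathcal{S}_2}\|T\|$. For the matching lower bound, I test $M_T$ on rank-one operators $S = \eta\otimes\xi$, where by $\eta\otimes\xi$ I mean the map $u\mapsto \langle u,\xi\rangle\eta$, and use the identity $ST = \eta \otimes T^*\xi$ to obtain $\|M_T S\|_{\mathcal{S}_2} = \|\eta\|\,\|T^*\xi\|$; the supremum over unit $\eta,\xi$ then reaches $\|T^*\| = \|T\|$. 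The identity $M_T^* = M_{T^*}$ follows directly from cyclicity of the trace, $\langle M_T S_1, S_2\rangle_{\mathcal{S}_2} = \Tr(S_2^*S_1T) = \Tr(TS_2^*S_1) = \langle S_1, M_{T^*}S_2\rangle_{\mathcal{S}_2}$. The easy inclusion $\sigma(M_T)\subseteq\sigma(T)$ is immediate: if $T - \lambda$ is invertible then $M_{(T-\lambda)^{-1}}$ is a two-sided bounded inverse of $M_T - \lambda = M_{T-\lambda}$.

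To establish $\sigma(T)\subseteq\sigma_{\ess}(M_T)$ together with the multiplicity claim, I fix $\lambda\in\sigma(T)$ and argue by trichotomy based on the kernels of $T-\lambda$ and $T^*-\overline{\lambda}$. If $T^*\xi = \overline{\lambda}\xi$ for some $\xi \ne 0$, then for every $u\in\mathcal{H}$ the rank-one operator $u\otimes\xi$ satisfies $(u\otimes\xi)T = u\otimes T^*\xi = \lambda(u\otimes\xi)$. Letting $u$ range over an infinite orthonormal system produces a linearly independent family in $\ker(M_T-\lambda)$, which is therefore infinite-dimensional; this settles both $\lambda\in\sigma_{\ess}(M_T)$ in this case and the final assertion of the proposition. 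If instead $(T-\lambda)\xi_0 = 0$ for some $\xi_0\ne 0$, then every $R$ in the range of $M_{T-\lambda}$ satisfies $R\xi_0 = 0$; the closed subspace $\{R\in\mathcal{S}_2 : R\xi_0 = 0\}$ has infinite codimension in $\mathcal{S}_2$, so the cokernel of $M_T-\lambda$ is infinite-dimensional.

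The remaining case is that both $T-\lambda$ and $T^*-\overline{\lambda}$ are injective. Taking orthogonal complements of ranges, both operators then have dense range, but neither can be bounded below: dense range combined with a lower bound would force invertibility, contradicting $\lambda\in\sigma(T)$. I pick $\eta_n\in\mathcal{H}$ with $\|\eta_n\|=1$ and $(T^*-\overline{\lambda})\eta_n \to 0$, and a fixed unit vector $u\in\mathcal{H}$; the rank-one operators $S_n = u\otimes\eta_n$ then satisfy $\|S_n\|_{\mathcal{S}_2} = 1$, while $S_n(T-\lambda) = u\otimes(T^*-\overline{\lambda})\eta_n \to 0$ in $\mathcal{S}_2$. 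Hence $M_T-\lambda$ is not bounded below; since the earlier two cases do not apply it is moreover injective with dense range, so its range is not closed. In all three cases $M_T-\lambda$ fails to be Fredholm, and therefore $\sigma_{\ess}(M_T) = \sigma(M_T) = \sigma(T)$.

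The main technical subtlety lies in the third case: there are no actual eigenvectors available for either $T$ or $T^*$, and I must extract approximate eigenvectors of $T^*$ at $\overline{\lambda}$ and convert them, via rank-one test operators, into an approximate-null sequence for $M_T - \lambda$. Once this is in place, non-closedness of the range of $M_T-\lambda$ follows from the standard fact that a bounded injective operator with closed range on a Hilbert space is bounded below.
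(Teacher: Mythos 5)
Your proof is correct, and its skeleton --- rank-one test operators for the norm, the trace identity for $M_T^\ast = M_{T^\ast}$, the inverse $M_{(T-\lambda)^{-1}}$ for the easy inclusion $\sigma(M_T)\subseteq\sigma(T)$, and a case analysis for $\sigma(T)\subseteq\sigma_{\ess}(M_T)$ --- matches the paper's. The case analysis is organized differently, though. The paper splits on the behaviour of $T^\ast-\overline{\lambda}$ alone (nontrivial kernel; injective but not bounded below; bounded below but not surjective), while you split on the kernels of $T-\lambda$ and $T^\ast-\overline{\lambda}$. Your treatment of the case $\ker(T-\lambda)\neq 0$ via the annihilation condition $R\xi_0=0$ on the range is the dual of the paper's argument, which instead exhibits an infinite-dimensional kernel of the adjoint $M_{T^\ast}-\overline{\lambda}$; the two are equivalent. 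The genuine gain is in your third case: since both $T-\lambda$ and $T^\ast-\overline{\lambda}$ are injective there, $M_T-\lambda$ is itself injective, so ``not bounded below'' immediately forces non-closed range and hence non-Fredholmness. The paper instead quotients by the finite-dimensional kernel $J$ and invokes a lemma to choose $h_n$ with $S_{f_n,h_n}\in J^\perp$ --- machinery your decomposition renders unnecessary, since in that situation $J=\{0\}$ anyway. One small point worth making explicit: the assertion ``since the earlier two cases do not apply it is moreover injective with dense range'' deserves its one-line justification, namely that $\ker(M_{T-\lambda})$ consists of the $S$ vanishing on $\overline{\ran(T-\lambda)}=\ker(T^\ast-\overline{\lambda})^{\perp}=\mathcal{H}$, and dually for the density of the range.
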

\begin{proof}
It is clear that $M_T^\ast = M_{T^\ast}$, since
$$\langle M_T S_1, S_2 \rangle_{\mathcal{S}_2} = \tr(S_1TS_2^\ast) = \tr(S_1(S_2T^\ast)^\ast) = \langle S_1, M_{T^\ast}S_2 \rangle_{\mathcal{S}_2}, \quad S_1,S_2 \in \mathcal{S}_2,$$
where $\tr$ denotes the usual trace of an operator in the trace class.

It is a standard fact that $\|ST\|_{\mathcal{S}_2} \leq \|S\|_{\mathcal{S}_2} \|T\|_{B(\mathcal{H})}$. Conversely, consider, for $g,h \in \mathcal{H}$, the rank-one operator $S_{g,h} = g \otimes h \in \mathcal{S}_2$,
$$S_{g,h} f = \langle f, g \rangle h, \quad  f \in \mathcal{H}.$$ Then $\|S_{g,h}\|_{\mathcal{S}_2} = \|g\| \|h\|$, while 
$$\|M_T S_{g,h}\|_{\mathcal{S}_2} = \|S_{T^\ast g,h}\|_{\mathcal{S}_2} = \|T^\ast g \| \|h\|.$$
It follows that $\|M_T\|_{B(\mathcal{S}_2)} = \|T\|_{B(\mathcal{H})}$. 

It is clear that $\sigma(M_T) \subset \sigma(T)$, for if $T - \lambda$ is invertible, then $M_{(T-\lambda)^{-1}}$ is the inverse of $M_T - \lambda$. 

If $\lambda \in \sigma(T)$ and $\overline{\lambda}$ is an eigenvalue of $T^\ast$ with non-zero eigenfunction $f$, then $\lambda$ is an eigenvalue of infinite multiplicity of $M_T$, since
\begin{equation}\label{eq:eiginfmult}
(M_T - \lambda) S_{f,h} = S_{(T^\ast - \overline{\lambda})f, h} = 0, \quad h \in \mathcal{H}.
\end{equation}

If $\lambda \in \sigma(T)$ and $T^\ast - \overline{\lambda}$ is injective but not bounded below, choose a sequence $(f_n) \subset \mathcal{H}$ such that $\|f_n\|=1$ for all $n$, but $\|T^\ast f_n - \overline{\lambda}f_n\| \to 0$ as $n \to \infty$. If $M_T - \lambda$ were Fredholm, then $M_T - \lambda \colon \mathcal{S}_2/J \to \mathcal{S}_2$ would be bounded below, where $J$ is the finite-dimensional kernel of $M_T - \lambda$. Since $\{S_{f_n,h} \, : \, h \in \mathcal{H}\}$ is an infinite-dimensional closed subspace of $\mathcal{S}_2$ we can for each $n$ pick $h_n$ with $\|h_n\| = 1$ such that $S_{f_n, h_n} \in J^\perp$ \cite[Lemma~2.3]{Schm12}. Then $\|S_{f_n, h_n}\|_{\mathcal{S}_2/J} = 1$, but
$$\|(M_T - \lambda) S_{f_n,h_n}\|_{\mathcal{S}_2} = \|S_{(T^\ast - \overline{\lambda})f_n, h_n}\|_{\mathcal{S}_2} = \|T^\ast f_n - \overline{\lambda}f_n\| \to 0, \quad n\to\infty,$$
a contradiction. Hence $M_T - \lambda$ is not Fredholm in this case either.

Finally, suppose that $\lambda \in \sigma(T)$ and that $T^\ast - \overline{\lambda}$ is bounded below but does not have full range. Then the range is not dense, and thus $\lambda$ is an eigenvalue of $T$. As in \eqref{eq:eiginfmult}, it follows that $(M_T - \lambda)^\ast = M_{T^\ast} - \overline{\lambda}$ has infinite-dimensional kernel. Hence $M_{T} - \lambda$ is not Fredholm. 

Adding up the different cases, we have shown that 
$$\sigma(T) \subset \sigma_{\ess}(M_T) \subset \sigma(M_T) \subset \sigma(T),$$
finishing the proof.
\end{proof}
\section{The $L^2$-spectrum} \label{sec:L2}
For $a \in \R$, recall the definition of $V_\gamma$ from \eqref{eq:Vdef} and note that
$$V_{\frac{a+1}{2}}  \colon L^{2,a}(dx \, dz) \to L^2(G)$$
 is unitary.  Hence $K_\alpha \colon L^{2,a}(dx \, dz) \to L^{2,a}(dx \, dz)$ is unitarily equivalent to 
 $$V_{\frac{a+1}{2}} K_\alpha V_{-\frac{a+1}{2}} \colon L^2(G) \to L^2(G).$$
 By equation \eqref{eq:Kformula2}, we see that
\begin{equation} \label{eq:Kunitary}
V_{\frac{a+1}{2}} K_\alpha V_{-\frac{a+1}{2}} f = f \star \Delta^{-\frac{a+1}{2}}k_\alpha, \quad f \in L^2(G),
\end{equation}
where
$$\Delta^{-\frac{a+1}{2}} k_\alpha(s,t) = -\frac{\sin \alpha}{2\pi}\frac{s^{\frac{a+1}{2}}}{(1+s^2 - 2s\cos \alpha + t^2)^{3/2}}.$$
The following lemma was first observed in \cite{FJL77, Qiao18}.
\begin{lem} \label{lem:l2norm}
For $-1 < a < 3$, $K_\alpha \colon L^{2,a}(dx \, dz) \to L^{2,a}(dx \, dz)$ is bounded with norm
$$\|K_\alpha\|_{B(L^{2,a}(dx \, dz))} \leq \left|\frac{\sin\left( (1-a)\frac{\pi - \alpha}{2} \right)}{\sin\left( (1-a) \frac{\pi}{2} \right)}\right|.$$
For $a=1$ the right-hand side should be interpreted as $|1 - \alpha/\pi|$.
\end{lem}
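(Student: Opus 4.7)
The plan is to exploit the convolution representation of $K_\alpha$ given in \eqref{eq:Kunitary} and apply Young's inequality on the non-unimodular group $G$, then evaluate the resulting $L^1(G)$-norm using a classical Mellin integral.

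First, I would note that since $V_{(a+1)/2} \colon L^{2,a}(dx\,dz) \to L^2(G)$ is unitary, it suffices by \eqref{eq:Kunitary} to estimate the operator norm of right-convolution by $k := \Delta^{-(a+1)/2} k_\alpha$ on $L^2(G)$. Applying Lemma~\ref{lem:young} with $p = r = 2$ and $q = 1$ (so that $1/q' = 0$ and the modular factor disappears) yields
$$\|f \star k\|_{L^2(G)} \leq \|f\|_{L^2(G)} \|k\|_{L^1(G)},$$
so the task reduces to computing the explicit scalar $\|k\|_{L^1(G)}$, provided this is finite.

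Next, I would compute
$$\|k\|_{L^1(G)} = \frac{|\sin\alpha|}{2\pi} \int_0^\infty \int_{-\infty}^\infty \frac{s^{(a+1)/2}}{(1 + s^2 - 2s\cos\alpha + t^2)^{3/2}}\,\frac{ds}{s}\,dt.$$
Performing the $t$-integral first via the elementary identity
$$\int_{-\infty}^\infty \frac{dt}{(A^2 + t^2)^{3/2}} = \frac{2}{A^2}, \qquad A^2 = 1+s^2-2s\cos\alpha > 0,$$
reduces the expression to
$$\|k\|_{L^1(G)} = \frac{|\sin\alpha|}{\pi} \int_0^\infty \frac{s^{(a-1)/2}}{1 - 2s\cos\alpha + s^2}\,ds.$$
The integrand is integrable near $0$ iff $a > -1$ and near $\infty$ iff $a < 3$, which is exactly the hypothesis.

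The key evaluation is the classical Mellin-beta identity (obtained, e.g., by factoring the quadratic denominator as $(s-e^{i\alpha})(s-e^{-i\alpha})$ and summing the two residue contributions via the standard $\int_0^\infty \frac{x^{s-1}}{x+e^{i\theta}}\,dx$ formula):
$$\int_0^\infty \frac{s^{\sigma-1}}{1-2s\cos\alpha + s^2}\,ds = \frac{\pi\,\sin\bigl((1-\sigma)(\pi-\alpha)\bigr)}{\sin(\pi\sigma)\,\sin\alpha}, \quad 0 < \sigma < 2.$$
Setting $\sigma = (a+1)/2$ and using the identity $\sin(\pi(a+1)/2) = \sin((1-a)\pi/2)$, the factor $|\sin\alpha|/\pi$ cancels and I obtain exactly
$$\|k\|_{L^1(G)} = \left|\frac{\sin\bigl((1-a)\tfrac{\pi-\alpha}{2}\bigr)}{\sin\bigl((1-a)\tfrac{\pi}{2}\bigr)}\right|,$$
yielding the stated norm bound. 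The borderline case $a = 1$ is handled by l'Hôpital's rule applied to both the sine ratio and to the integral itself (both numerator and denominator of the ratio vanish linearly in $(1-a)$, giving the ratio $(\pi-\alpha)/\pi = |1 - \alpha/\pi|$).

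The only step that requires any genuine care is the Mellin integral evaluation; the rest is bookkeeping. One could alternatively derive the same bound through Proposition~\ref{prop:specmult} by identifying $\|K_\alpha\|$ with $\max(\|\F_-(k)\|, \|\F_+(k)\|)$, but the Young's inequality route is more direct and gives the correct norm bound in one shot.
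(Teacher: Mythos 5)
Your proposal is correct and follows exactly the paper's route: reduce to right-convolution on $L^2(G)$ via the unitary $V_{(a+1)/2}$, apply Young's inequality with $q=1$, and evaluate $\|\Delta^{-(a+1)/2}k_\alpha\|_{L^1(G)}$ by first integrating in $t$ and then computing the resulting Mellin integral. The paper states the same computation more tersely (leaving the Mellin--beta evaluation implicit), but there is no substantive difference.
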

\begin{proof}
This follows by Young's inequality 
$$\|f \star g \|_{L^2(G)} \leq \|f\|_{L^2(G)} \|g\|_{L^1(G)},$$
and the computation
\begin{align*}
\|\Delta^{-\frac{a+1}{2}} k_\alpha\|_{L^1(G)} &= \frac{|\sin \alpha|}{2\pi} \int_0^\infty \int_{-\infty}^{\infty} \frac{s^{\frac{a+1}{2}}}{(1+s^2 - 2s\cos \alpha + t^2)^{3/2}} \, dt \frac{ds}{s} \\
&= \frac{|\sin \alpha|}{\pi} \int_0^\infty \frac{s^{\frac{a+1}{2}}}{1+s^2 - 2s\cos \alpha} \, \frac{ds}{s} = \left|\frac{\sin\left( (1-a)\frac{\pi-\alpha}{2} \right)}{\sin\left( (1-a) \frac{\pi}{2} \right)}\right|. \qedhere
\end{align*}
\end{proof}
For $-1 < a < 3$, let 
$$T_{\alpha, a}^\pm = \F_\pm(\Delta^{-\frac{a+1}{2}}k_\alpha),$$
and as in Proposition~\ref{prop:specmult}, let $M_{T_{\alpha, a}^\pm}$ denote the operator of right multiplication by $T_{\alpha, a}^\pm$ on $\mathcal{S}_2 = \mathcal{S}_2(L^2(\R_+))$. Then, by equation \eqref{eq:Kunitary} and Proposition~\ref{prop:conv}, $$K_\alpha \colon L^{2,a}(dx \, dz) \to L^{2,a}(dx \, dz)$$ is unitarily equivalent to 
$$\begin{pmatrix} 
M_{T_{\alpha,a}^{-}} & 0\\
0 & M_{T_{\alpha,a}^{+}}
\end{pmatrix} \colon \mathcal{S}_2^{(2)}(L^2(\R_+)) \to \mathcal{S}_2^{(2)}(L^2(\R_+)).$$
Explicitly, for $\eta \in L^2(\R_+)$ and $r > 0$,
$$
T_{\alpha,a}^{\pm} \eta (r) = \int_0^\infty \left( \frac{x}{r} \right)^{\frac{a+1}{2}} \int_{-\infty}^\infty e^{\mp 2\pi i z r} k_\alpha(x/r, z) \, dz \, \eta(x) \, \frac{dx}{x}.$$
Hence $T_{\alpha,a}^{\pm}$ is an integral operator given by
$$
T_{\alpha,a}^{\pm} \eta (r) = \int_0^\infty \left( \frac{x}{r} \right)^{\frac{a+1}{2}} T_\alpha^\pm(r,x)   \eta(x) \, \frac{dx}{x},$$
where
$$T_\alpha^\pm(r,x) := \int_{-\infty}^\infty e^{\mp 2\pi i z r} k_\alpha(x/r, z) \, dz = -2\sin(\alpha) r A_\alpha(r,x)^{-1} K_1(2\pi rA_\alpha(r,x)).$$
Here 
$$A_\alpha(r,x) = \left(1+\left(\frac{x}{r}\right)^2 - 2\frac{x}{r} \cos \alpha\right)^{1/2},$$
and $K_1$ is a modified Bessel function of the second kind \cite[p. 376]{AS72},
$$K_1(R) = R \int_{0}^\infty \frac{\cos t}{(R^2+t^2)^{3/2}} \, dt, \quad R > 0.$$
$K_1$ has the following asymptotics \cite[p. 378]{AS72},
\begin{equation} \label{eq:Bessel1}
K_1(R) = \frac{1}{R} + O(R), \quad R \to 0,
\end{equation}
and 
\begin{equation} \label{eq:Bessel2}
K_1(R) = \left(\frac{\pi}{2}\right)^{1/2} \frac{e^{-R}}{\sqrt{R}} \left( 1 + O \left(\frac{1}{R}\right) \right), \quad R \to \infty.
\end{equation}

\begin{lem} \label{lem:cpctperturb}
For $-1 < a < 3$, $T_{\alpha,a}^\pm \colon L^2(\R_+) \to L^2(\R_+)$ is a compact perturbation of the integral operator $I_{\alpha,a} \colon L^2(\R_+) \to L^2(\R_+)$ with kernel
$$\left(\frac{x}{r}\right)^{\frac{a+1}{2}}I_\alpha(r,x) := -\frac{\sin \alpha}{\pi}\chi_{(0,1)^2}(r,x) \left(\frac{x}{r}\right)^{\frac{a+1}{2}} A(r,x)^{-2},$$
where $\chi_{(0,1)^2}$ denotes the characteristic function of the square $(0,1)^2$.
\end{lem}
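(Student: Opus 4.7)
The plan is to prove the stronger statement that $T_{\alpha,a}^\pm - I_{\alpha,a}$ is Hilbert--Schmidt on $L^2(\R_+) = L^2(\R_+, dr/r)$. First I would rewrite the kernel in a form that exposes the relevant cancellations. Using the identity $rA_\alpha(r,x) = \sqrt{r^2+x^2-2rx\cos\alpha}$ and setting $\rho := 2\pi r A_\alpha(r,x)$, the formula for $T_\alpha^\pm$ becomes $T_\alpha^\pm(r,x) = -\frac{\sin\alpha}{\pi A_\alpha(r,x)^2}\,\rho K_1(\rho)$, so the kernel of $T_{\alpha,a}^\pm - I_{\alpha,a}$ is
\[
k(r,x) = -\frac{\sin\alpha}{\pi}\,\frac{(x/r)^{(a+1)/2}}{A_\alpha(r,x)^2}\,\bigl[\rho K_1(\rho) - \chi_{(0,1)^2}(r,x)\bigr].
\]
This makes the key cancellation transparent: inside $(0,1)^2$ the bracket equals $\rho K_1(\rho) - 1$, which by \eqref{eq:Bessel1} is $O(\rho^2|\log\rho|)$ as $\rho\to 0$; on the complement the bracket equals $\rho K_1(\rho)$, which by \eqref{eq:Bessel2} decays as $\rho^{1/2}e^{-\rho}$ as $\rho\to\infty$.

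To compute $\|k\|_{HS}^2 = \iint|k(r,x)|^2\frac{dr}{r}\frac{dx}{x}$, I would substitute $s = x/r$ in the $x$-integral, noting that $A_\alpha(r,rs) = A_\alpha(s) := (1+s^2-2s\cos\alpha)^{1/2}$ depends only on $s$; then, for each fixed $s$, substitute $\rho = 2\pi r A_\alpha(s)$ in the $r$-integral, so that $dr/r = d\rho/\rho$ and $\chi_{(0,1)^2}(r,rs) = \chi_{\{r<\min(1,1/s)\}}$ transforms into $\chi_{\{\rho<\rho_0(s)\}}$ with $\rho_0(s) = 2\pi A_\alpha(s)/\max(1,s)$. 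This yields
\[
\|k\|_{HS}^2 = \frac{\sin^2\alpha}{\pi^2}\int_0^\infty \frac{s^a}{A_\alpha(s)^4}\bigl(J_1(s) + J_2(s)\bigr)\,ds,
\]
with $J_1(s) = \int_0^{\rho_0(s)}|\rho K_1(\rho)-1|^2\,\frac{d\rho}{\rho}$ and $J_2(s) = \int_{\rho_0(s)}^\infty|\rho K_1(\rho)|^2\,\frac{d\rho}{\rho}$.

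Finally I would verify the bounds. The crucial geometric input is $A_\alpha(s)^2 = (s-\cos\alpha)^2+\sin^2\alpha \geq \sin^2\alpha$, and likewise $(A_\alpha(s)/s)^2 \geq \sin^2\alpha$, together with $A_\alpha(s)\leq 1+s$; these sandwich $\rho_0(s)$ between the positive constants $2\pi|\sin\alpha|$ and $4\pi$ uniformly in $s$. The asymptotics of $K_1$ then force $J_1(s)$ and $J_2(s)$ to be uniformly bounded: $J_1$ is finite because $|\rho K_1(\rho)-1|^2/\rho = O(\rho^3(\log\rho)^2)$ near $0$ and $\rho_0(s)$ is bounded above, while $J_2$ is finite because $|\rho K_1(\rho)|^2$ decays exponentially at $\infty$ and the potentially non-integrable behavior $|\rho K_1(\rho)|^2/\rho \sim \rho^{-1}$ near the origin is avoided since $\rho_0(s) \geq 2\pi|\sin\alpha|>0$. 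Consequently
\[
\|k\|_{HS}^2 \lesssim_\alpha \int_0^\infty \frac{s^a}{(1+s^2-2s\cos\alpha)^2}\,ds,
\]
and this integral converges precisely when $-1<a<3$ (integrability at $s=0$ requires $a>-1$, at $s=\infty$ requires $a<3$), matching the range of the lemma. Hence $T_{\alpha,a}^\pm - I_{\alpha,a}$ is Hilbert--Schmidt, hence compact.

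The main obstacle, and the reason the cutoff in the definition of $I_{\alpha,a}$ is placed on the square $(0,1)^2$ rather than on some other region, is securing the uniform lower bound $\rho_0(s) \geq 2\pi|\sin\alpha|>0$: without it, $J_2$ would diverge at the origin. This transversality bound breaks down at $\alpha\in\{0,\pi\}$, which is exactly why these degenerate angles are excluded from the wedge throughout the paper.
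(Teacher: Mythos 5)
Your proposal is correct, and it proves the same strengthened claim as the paper (the difference $T_{\alpha,a}^\pm - I_{\alpha,a}$ is Hilbert--Schmidt), using the same essential inputs: the Bessel asymptotics \eqref{eq:Bessel1}--\eqref{eq:Bessel2} and the positivity of $A_\alpha$. The difference is in how the double integral is organized. The paper splits $(0,\infty)^2$ into the four regions $(0,1)^2$, $(0,1)\times(1,\infty)$, $(1,\infty)\times(0,1)$, $(1,\infty)^2$ and estimates the kernel crudely on each, using $B(r,x)\gtrsim x$, $\gtrsim r$, or $\gtrsim x+r$ as appropriate; this is quick but leaves the role of the square cutoff somewhat implicit. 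You instead perform the exact change of variables $(r,x)\mapsto (s,\rho)=(x/r,\,2\pi r A_\alpha(s))$, which exploits the fact that $A_\alpha(r,rs)$ depends only on $s$ and collapses the computation to the single integral $\int_0^\infty s^a A_\alpha(s)^{-4}\,ds$ times uniformly bounded inner integrals $J_1, J_2$. This buys two things: it makes the origin of the restriction $-1<a<3$ completely transparent (it is exactly the convergence condition for $\int_0^\infty s^a(1+s^2-2s\cos\alpha)^{-2}\,ds$), and it isolates the precise property of the cutoff region that matters, namely that $\rho_0(s)=2\pi A_\alpha(s)/\max(1,s)$ is pinched between $2\pi|\sin\alpha|$ and $4\pi$. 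Two minor remarks: the expansion $K_1(R)=1/R+O(R)$ quoted in the paper should really carry a logarithmic correction, $O(R\log(1/R))$, but your bound $|\rho K_1(\rho)-1|^2/\rho=O(\rho^3(\log\rho)^2)$ already accounts for this and the integrability is unaffected; and your closing claim that the square $(0,1)^2$ is chosen to secure the lower bound on $\rho_0$ is a correct necessary condition, though the paper's actual motivation for that particular truncation is to exhibit $I_{\alpha,a}$ as a truncated Mellin convolution on $L^2((0,1))$, which is what feeds into Proposition~\ref{prop:Tspectrum}.
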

\begin{proof}
In fact,
$$\left(\frac{x}{r}\right)^{\frac{a+1}{2}}(T_\alpha^{\pm}(r,x) - I_\alpha(r,x)) \in L^2\left(\frac{dx}{x} \, \frac{dr}{r}\right),$$
so that $T_{\alpha,a}^{\pm} - I_{\alpha,a}$ is Hilbert-Schmidt. To see this, let
$$B(r,x) = r A_\alpha(r,x) = ((x - r \cos \alpha)^2 + r^2 \sin \alpha)^{1/2}.$$
$B(r,x)$ is bounded for $0 < r, x < 1$, so by \eqref{eq:Bessel1}
$$\int_0^1 \int_0^1 \left(\frac{x}{r}\right)^{a+1}|T_\alpha(r,x) - I_\alpha(r,x)|^2 \, \frac{dx}{x} \, \frac{dr}{r} \lesssim \int_0^1 \int_0^1 x^a r^{2-a} \, dx \, dr < \infty,$$
since $-1 < a < 3$. If $0 < r < 1$ and $1 < x < \infty$, then $B(r,x) \gtrsim x > 1$ and $A_\alpha(r,x) \gtrsim r^{-1}$, so by \eqref{eq:Bessel2} there is a constant $\gamma > 0$ such that
$$\int_0^1 \int_1^\infty \left(\frac{x}{r}\right)^{a+1}|T_\alpha(r,x)|^2 \, \frac{dx}{x} \, \frac{dr}{r} \lesssim \int_0^1 \int_1^\infty  \left(\frac{x}{r}\right)^{a+1} r^4 \frac{e^{-\gamma x}}{x} \, \frac{dx}{x} \, \frac{dr}{r} < \infty.$$
If $1 < r < \infty$ and $0 < x < 1$ we use that $B(r,x) \gtrsim r > 1$ and $A_\alpha(r,x) \gtrsim 1$, and therefore
$$\int_1^\infty \int_0^1 \left(\frac{x}{r}\right)^{a+1}|T_\alpha(r,x)|^2 \, \frac{dx}{x} \, \frac{dr}{r} \lesssim \int_1^\infty \int_0^1  x^a r^{-a} e^{-\gamma r} \, dx \, dr < \infty.$$
Finally, when $1 < r < \infty$ and $1 < x < \infty$ we have that $B(r,x) \gtrsim x + r$, and thus
\begin{equation*}
\int_1^\infty \int_1^\infty \left(\frac{x}{r}\right)^{a+1}|T_\alpha(r,x)|^2 \, \frac{dx}{x} \, \frac{dr}{r} \lesssim \int_1^\infty \int_1^\infty  x^a r^{-a} e^{-\gamma r}e^{-\gamma x} \, dx \, dr < \infty. \qedhere
\end{equation*}
\end{proof}
Observe that $I_{\alpha, a}$ is a truncated Mellin convolution operator (convolution on the group $\R_+$) with kernel
$$i_{\alpha,a}(s) = -\frac{\sin \alpha}{\pi}\frac{s^{\frac{3-a}{2}}}{1+s^2 -2s\cos \alpha},$$
in the sense that 
$$\left(\frac{x}{r}\right)^{\frac{a+1}{2}}I_\alpha(r,x) = i_\alpha(r/x), \quad 0 < r,x < 1.$$ 
For $-1 < a <3$, the kernel $i_{\alpha,a} \in L^1(\R_+)$ has Mellin transform
$$\mathcal{M}i_{\alpha,a}(\xi) = -\frac{\sin \left( (\frac{1-a}{2} +i\xi) (\pi - \alpha) \right)}{\sin \left( (\frac{1-a}{2} +i\xi) \pi \right)}, \quad \xi \in \R.$$
The range of this transform is the closed curve 
$$\Sigma_{\alpha,a} = \left \{-\frac{\sin \left( (\frac{1-a}{2} +i\xi) (\pi - \alpha) \right)}{\sin \left( (\frac{1-a}{2} +i\xi) \pi \right)} \, : \, -\infty \leq \xi \leq \infty \right \}.$$
For $a \neq 1$ this is a simple closed curve in $\mathbb{C}$, positively oriented if $-1 < a < 1$ and negatively oriented if $1 < a < 3$, in either case satisfying that $\Sigma_{\alpha, a} = \Sigma_{\alpha, 2-a}$. If $0 < \alpha < \pi$ then $\Sigma_{\alpha,a}$ lies in the left half-plane of $\mathbb{C}$, in the right half-plane if $\pi < \alpha < 2\pi$. For $a = 1$, $\Sigma_{\alpha,1}$ is the real interval between $0$ and $\alpha/\pi - 1$. It is clear that $\Sigma_{\alpha,a}$ is symmetric with respect to complex conjugation. The curves are increasing in $1 \leq a < 3$ in the sense that if $1 \leq a < a' < 3$, then every point of $\Sigma_{\alpha, a}$ but the origin is contained in the interior of $\Sigma_{\alpha, a'}$. For precise calculations we refer to \cite{Mit02}.

Lemma~\ref{lem:cpctperturb} shows that, with respect to the decomposition 
$$L^2(\R_+) = L^2((0,1), r^{-1}dr) \oplus L^2((1,\infty), r^{-1} dr),$$ we have that
$$T_{\alpha, a}^{\pm} = \begin{pmatrix} 
J_{\alpha,a} & *\\
* & *
\end{pmatrix},$$
where the entries marked $*$ are compact operators, and $J_{\alpha,a}$ is a pseudo-differential operator of Mellin type. There is a fully fledged theory of such operators developed by Elschner, Lewis, and Parenti \cite{Els87, Lew91, LP83}, together with a symbolic calculus which for $\lambda \notin \Sigma_{\alpha,a}$ gives the index of $J_{\alpha,a} - \lambda$, and thus of $T_{\alpha,a}^{\pm} - \lambda$, as the winding number $W(\Sigma_{\alpha,a}, \lambda)$ of $\lambda$ with respect to $\Sigma_{\alpha,a}$. In fact, the same operator $J_{\alpha,a}$ appears in computing the spectrum of double layer potentials on curvilinear polygons in 2D, and thus the relevant calculations already appear in \cite{Lew90, Mit02}. We do not give an account of the theory here, but instead summarize the conclusion it yields in the next proposition.
\begin{prop} \label{prop:Tspectrum}
The essential spectrum of $T_{\alpha,a}^{\pm}$ is 
$$\sigma_{\ess}(T_{\alpha,a}^{\pm}) = \Sigma_{\alpha,a}.$$
If $\lambda \notin \Sigma_{\alpha,a}$, then $T_{\alpha,a}^{\pm} - \lambda$ is Fredholm with index
$$\ind(T_{\alpha,a}^{\pm} - \lambda) = W(\Sigma_{\alpha,a}, \lambda).$$
\end{prop}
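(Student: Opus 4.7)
The plan is to reduce the analysis of $T_{\alpha,a}^{\pm}$ to that of the truncated Mellin convolution operator $J_{\alpha,a}$, and then invoke the symbolic calculus for Mellin pseudo-differential operators developed in \cite{Els87, Lew91, LP83}.

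First, since the essential spectrum and the Fredholm index are invariant under compact perturbations, Lemma~\ref{lem:cpctperturb} allows us to replace $T_{\alpha,a}^{\pm}$ by the model operator $I_{\alpha,a}$. It therefore suffices to show that
$$\sigma_{\ess}(I_{\alpha,a}) = \Sigma_{\alpha,a} \quad \textrm{and} \quad \ind(I_{\alpha,a} - \lambda) = W(\Sigma_{\alpha,a}, \lambda) \textrm{ for } \lambda \notin \Sigma_{\alpha,a}.$$

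Second, because the kernel of $I_{\alpha,a}$ is supported in $(0,1)^2$, the orthogonal decomposition $L^2(\R_+) = L^2((0,1), r^{-1}\,dr) \oplus L^2((1,\infty), r^{-1}\,dr)$ brings $I_{\alpha,a}$ to the block form
$$I_{\alpha,a} = \begin{pmatrix} J_{\alpha,a} & 0 \\ 0 & 0 \end{pmatrix}.$$
For $\lambda \neq 0$ the lower-right entry $-\lambda$ is an invertible scalar on $L^2((1,\infty), r^{-1}\,dr)$, so $I_{\alpha,a} - \lambda$ is Fredholm if and only if $J_{\alpha,a} - \lambda$ is Fredholm on $L^2((0,1), r^{-1}\,dr)$, with the same index. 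The point $\lambda = 0$ is the common limit of $\mathcal{M}i_{\alpha,a}(\xi)$ as $\xi \to \pm\infty$, hence $0 \in \Sigma_{\alpha,a}$; it sits in the essential spectrum automatically, because of the zero block.

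Third, $J_{\alpha,a}$ is a truncated Mellin convolution operator with kernel $i_{\alpha,a}$, whose Mellin symbol traces out the curve $\Sigma_{\alpha,a}$. Applying the symbolic calculus of \cite{Els87, Lew91, LP83}, one identifies the essential spectrum of $J_{\alpha,a}$ as the range of its symbol, namely $\Sigma_{\alpha,a}$, and computes the Fredholm index at $\lambda \notin \Sigma_{\alpha,a}$ as the winding number $W(\Sigma_{\alpha,a}, \lambda)$. Since the same operator $J_{\alpha,a}$ governs the spectrum of the double layer potential on a 2D curvilinear polygon with corner angle $\alpha$, the precise symbolic computation has already been carried out in \cite{Lew90, Mit02}, and one may simply import it.

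The main obstacle is invoking the pseudo-differential calculus for truncated Mellin convolution operators correctly, in particular verifying that the truncation of $i_{\alpha,a}$ to $(0,1)$ falls within the admissible class covered by \cite{Els87, Lew91, LP83}, and that the winding number is measured with the orientation convention used there. These checks are mechanical once one is inside that framework, which is why the proof is best presented as an application of the cited theory rather than a self-contained derivation.
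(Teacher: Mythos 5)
Your proposal is correct and follows essentially the same route as the paper: reduce $T_{\alpha,a}^{\pm}$ modulo compacts to the truncated Mellin convolution operator $J_{\alpha,a}$ via Lemma~\ref{lem:cpctperturb}, and then invoke the symbolic calculus of \cite{Els87, Lew91, LP83} together with the computations already carried out for the 2D polygon in \cite{Lew90, Mit02}. The only (welcome) additions are your explicit observation that $I_{\alpha,a}$ is exactly block diagonal with a zero lower block and your separate treatment of $\lambda=0$, points the paper leaves implicit.
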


The classical Kellogg argument shows that any eigenvalue of $\K \colon L^2(\Gamma) \to L^2(\Gamma)$ must be real, in the case that $\Gamma$ is a bounded surface. However, this argument fails in the present setting, essentially because $L^2(\Gamma_\alpha)$ is not contained in the energy space $\E(\Gamma_\alpha)$, in the terminology of Section~\ref{sec:energy}. The next lemma offers a replacement of the Kellogg argument. For the statement, observe by \eqref{eq:Kformula} that $\K_\alpha \colon L^{2,1}(\Gamma_\alpha) \to L^{2,1}(\Gamma_\alpha)$ is a self-adjoint operator, hence has real spectrum.
\begin{lem} \label{lem:realeig}
If $-1 < a \leq 1$ and $\lambda \in \mathbb{C}$ is an eigenvalue of $\K_\alpha \colon L^{2,a}(\Gamma_\alpha) \to L^{2,a}(\Gamma_\alpha)$, or if $1 < a < 3$ and $\lambda$ is an eigenvalue of $\K_\alpha^\ast \colon L^{2,-a}(\Gamma_\alpha) \to L^{2,-a}(\Gamma_\alpha)$, then $\lambda \in \sigma(\K_\alpha, L^{2,1}(\Gamma_\alpha))$. In particular, $\lambda \in \R$.
\end{lem}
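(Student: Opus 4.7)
The final claim $\lambda \in \R$ follows from the main assertion and the self-adjointness of $\K_\alpha$ on $L^{2,1}(\Gamma_\alpha)$, which the paper notes from the symmetry of the integral kernel against the measure $x\,dx\,dz$. My plan for the main assertion $\lambda \in \sigma(\K_\alpha, L^{2,1}(\Gamma_\alpha))$ proceeds in three stages.

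First I would reduce to a scalar problem on a single half-plane. The off-diagonal block form~\eqref{eq:Kmatrixdecomp} turns an eigenvector $(f_1, f_2)$ of $\K_\alpha$ with eigenvalue $\lambda$ into eigenvectors $f_1 \pm f_2$ of $K_\alpha$ on $L^{2,a}(dx\,dz)$ with eigenvalues $\pm\lambda$, not both zero, while the same block structure makes $\sigma(\K_\alpha, L^{2,1})$ symmetric about the origin. So it is enough to show $\mu := \pm\lambda \in \sigma(K_\alpha, L^{2,1}(dx\,dz))$. For case (ii), a direct kernel computation from~\eqref{eq:Kformula} yields $K_\alpha^* = M_x K_\alpha M_{1/x}$, which exhibits $K_\alpha^*|_{L^{2,-a}}$ as unitarily equivalent to $K_\alpha|_{L^{2,2-a}}$; since $2-a \in (-1,1)$ when $a \in (1,3)$, this reduces case (ii) to case (i) at parameter $c = 2-a$.

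Next I would pass to the Fourier side. By~\eqref{eq:Kunitary} combined with Propositions~\ref{prop:conv} and~\ref{prop:specmult}, the equation $K_\alpha g = \mu g$ for $g \in L^{2,c}(dx\,dz)$ produces a nonzero Hilbert--Schmidt operator $S := \PP_\pm(V_{(c+1)/2} g)$ (for at least one sign) with $S(T^\pm_{\alpha,c} - \mu I) = 0$. Since the range of $S$ must lie in $\ker((T^\pm_{\alpha,c})^* - \bar\mu I)$, the latter is nontrivial, so $\bar\mu$ is an eigenvalue of $(T^\pm_{\alpha,c})^*$. Using the symmetry $xA_\alpha(x,r) = rA_\alpha(r,x)$ and the explicit Bessel form of $T^\pm_\alpha(r,x)$, I would verify the pointwise identity $T^\pm_\alpha(x,r) = (x/r)^2 T^\pm_\alpha(r,x)$, whence $(T^\pm_{\alpha,c})^* = T^\pm_{\alpha, 2-c}$. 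Thus $\bar\mu$ is an eigenvalue of $T^\pm_{\alpha, b}$ on $L^2(\R_+, dr/r)$ for $b := 2-c \in [1, 3)$, with some eigenfunction $h$.

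Finally I would transfer the eigenvalue to $T^\pm_{\alpha, 1}$. For $b = 1$, self-adjointness of $T^\pm_{\alpha,1} = (T^\pm_{\alpha,1})^*$ gives $\bar\mu \in \R$ and $\bar\mu \in \sigma(T^\pm_{\alpha,1}) = \sigma(K_\alpha, L^{2,1}(dx\,dz))$ immediately. For $b \in (1, 3)$, the conjugation identity $T^\pm_{\alpha,b} = U_{(1-b)/2} T^\pm_{\alpha,1} U_{(b-1)/2}$ with $U_\gamma \eta(r) = r^\gamma \eta(r)$ translates, by pulling the weight inside the integral, into the pointwise relation $T^\pm_{\alpha,1} \tilde h = \bar\mu \tilde h$ with $\tilde h := r^{(b-1)/2} h$. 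To make this a genuine eigenvalue equation in $L^2(\R_+, dr/r)$ I need $\int_0^\infty r^{b-2} |h|^2\,dr < \infty$. Near the origin the pointwise bound $r^{b-2} \leq r^{-1}$ (valid for $r \in (0,1]$ and $b \geq 1$) yields $\int_0^1 r^{b-2} |h|^2\,dr \leq \int_0^1 |h|^2\,dr/r < \infty$ directly from $h \in L^2(\R_+, dr/r)$. At infinity, applying Cauchy--Schwarz to $h = \bar\mu^{-1} T^\pm_{\alpha, b} h$ and using the exponential kernel decay~\eqref{eq:Bessel2}, one obtains $|h(r)| \lesssim e^{-\eta r}$ for some $\eta > 0$, which easily dominates the polynomial weight $r^{b-2}$. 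Then $\tilde h \in L^2(\R_+, dr/r)$, $\bar\mu$ is a genuine eigenvalue of the self-adjoint $T^\pm_{\alpha,1}$, hence $\bar\mu = \mu \in \sigma(T^\pm_{\alpha,1}) = \sigma(K_\alpha, L^{2,1}(dx\,dz))$, and the proof closes. The main obstacle will be carrying out the Cauchy--Schwarz estimate cleanly via a case split in the kernel $T^\pm_\alpha(r,x)$ across the regions where $r$ or $x$ is large, to extract uniform exponential decay.
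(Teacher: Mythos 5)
Your argument is correct, but it follows a genuinely different route from the paper's. The paper also reduces to the scalar operator $K_\alpha$ and an eigenvalue $\mu=\pm\lambda$, but then argues by contradiction via a localization: it splits the eigenfunction as $f=f_1+f_2$ according to $x<1$ or $x>1$, uses $a\le 1$ to place $f_1$ (hence $K_\alpha f_1$, hence $(K_\alpha-\mu)f_2$) in $L^{2,1}$, passes to the Fourier side to get $\PP_\pm(V_1f_2)(T_{\alpha,1}^\pm-\mu)\in\mathcal{S}_2$ (this is where the mixed-weight convolution statement, Lemma~\ref{lem:convdensdef}, is needed), and then observes that if $\K_\alpha-\lambda$ were invertible on $L^{2,1}(\Gamma_\alpha)$ the factor $T_{\alpha,1}^\pm-\mu$ could be inverted, forcing $f_2\in L^{2,1}$ and hence $\mu$ to be an eigenvalue on $L^{2,1}$ --- a contradiction. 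You instead transfer the eigenvalue directly: the adjoint identity $(T^\pm_{\alpha,c})^\ast=T^\pm_{\alpha,2-c}$ (which is correct, by the symmetry $rA_\alpha(r,x)=xA_\alpha(x,r)$, and is the Fourier-side counterpart of \eqref{eq:Kastformula}), followed by weight conjugation and a decay estimate showing the conjugated eigenfunction survives in $L^2(\R_+,dr/r)$. The paper's route is shorter and needs no pointwise control of eigenfunctions; yours is more computational but yields strictly more, namely that $\mu$ is a genuine eigenvalue of the self-adjoint $T^\pm_{\alpha,1}$, hence by Proposition~\ref{prop:specmult} an eigenvalue of infinite multiplicity of $K_\alpha$ on $L^{2,1}(dx\,dz)$, and that eigenfunctions of $T^\pm_{\alpha,b}$ decay exponentially. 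Two small points to tidy up: from $S(T^\pm_{\alpha,c}-\mu)=0$ with $S\ne 0$ it is the range of $S^\ast$ (equivalently, the orthogonal complement of $\ran(T^\pm_{\alpha,c}-\mu)$) that lies in $\ker((T^\pm_{\alpha,c})^\ast-\bar\mu)$, not the range of $S$; and you should dispose of $\mu=0$ separately (it lies in $\sigma(\K_\alpha,L^{2,1}(\Gamma_\alpha))$ trivially) before dividing by $\bar\mu$ in the decay estimate.
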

\begin{proof}
We give the argument for $-1 < a \leq 1$. The proof of the statement for $1 < a < 3$ is similar.
If $\lambda$ is an eigenvalue of $\K_\alpha \colon L^{2,a}(\Gamma_\alpha) \to L^{2,a}(\Gamma_\alpha)$, then, by \eqref{eq:Kmatrixdecomp}, either $\lambda$ or $-\lambda$ is an eigenvalue of $K_\alpha \colon L^{2,a}(dx \, dz) \to L^{2,a}(dx \, dz)$. Denote this latter eigenvalue by $\mu$.
 Let $f \in L^{2,a}(dx \, dz)$ be a non-zero eigenfunction and consider the decomposition
 $$f = f_1 + f_2, \quad f_1(x,z) = f(x,z)\chi_{(0,1)}(x), \; f_2(x,z) = f(x,z)\chi_{(1,\infty)}(x).$$
Noting that $a \leq 1$, we have that $f_1 \in L^{2,1}(dx \, dz)$, and therefore by Lemma~\ref{lem:l2norm} that $K_\alpha f_1 \in L^{2,1}(dx \, dz)$ as well. From the eigenvalue equation we hence obtain that
\begin{equation} \label{eq:eigvalue}
(K_\alpha - \mu) f_2 = -(K_\alpha - \mu) f_1 \in L^{2,1}(dx \, dz).
\end{equation}
In other words, $V_1(K_\alpha - \mu)V_{-1} V_1f_2 \in L^2(G)$, so that formal application of the Fourier transform yields
\begin{equation} \label{eq:eigvalue2}
\PP_{\pm}(V_1 f_2)(T_{\alpha, 1}^\pm - \mu) \in \mathcal{S}_2(L^2(\R_+)).
\end{equation}
To justify \eqref{eq:eigvalue2}, observe that $V_1f_2 \in L^{2, a-2}$ and that
$$V_{\frac{(a-2)+1}{2}}(\Delta^{-1}k_\alpha) = \Delta^{-\frac{a+1}{2}}k_\alpha \in L^1(G),$$
by the proof of Lemma~\ref{lem:l2norm}. Hence, by Lemma~\ref{lem:convdensdef}, the components of \eqref{eq:eigvalue2} are initially well-defined as bounded maps
$$\PP_{\pm}(V_1 f_2), T_{\alpha, 1}^\pm \colon L^2(\R_+, r^{-a} \, dr) \to L^2(\R_+, r^{-a} \, dr).$$
Equation~\eqref{eq:eigvalue} shows that $\PP_{\pm}(V_1 f_2)(T_{\alpha, 1}^\pm - \mu)$ in fact extends continuously to a Hilbert-Schmidt operator on $L^2(\R_+) = L^2(\R_+, r^{-1} \, dr)$.

Now, if $\lambda \notin \sigma(\K_\alpha, L^{2,1}(\Gamma_\alpha))$, then $K_\alpha - \mu \colon L^{2,1}(dx \, dz) \to L^{2,1}(dx \, dz)$ is invertible and thus $T_{\alpha, 1}^\pm - \mu \colon L^2(\R_+) \to L^2(\R_+)$ is invertible, by Propositions~\ref{prop:conv} and \ref{prop:specmult}. Therefore $\PP_{\pm}(V_1 f_2) \in \mathcal{S}_2$, that is, $V_1 f_2 \in L^2(G)$. Hence $f = f_1 + f_2 \in L^{2,1}(dx \, dz)$ and thus $\mu$ is an eigenvalue of $K_\alpha \colon L^{2,1}(dx \, dz) \to L^{2,1}(dx \, dz)$, from which it follows that $\lambda \in \sigma(\K_\alpha, L^{2,1}(\Gamma_\alpha))$, a contradiction.
\end{proof}
We are now ready to prove the main result of this section.  We denote by $\widehat{\Sigma}_{\alpha,a}$ the curve $\Sigma_{\alpha,a}$ together with its interior.
\begin{thm} \label{thm:main1}
For $-1 < a < 3$, the spectrum of $\K_\alpha \colon L^{2,a}(\Gamma_\alpha) \to L^{2,a}(\Gamma_\alpha)$ satisfies that
$$\sigma(\K_\alpha, L^{2,a}(\Gamma_\alpha)) = \sigma_{\ess}(\K_\alpha, L^{2,a}(\Gamma_\alpha)) = -\widehat{\Sigma}_{\alpha,a} \cup \widehat{\Sigma}_{\alpha,a}.$$
For $-1 < a < 1$, every point $\lambda$ in the interior of $-\widehat{\Sigma}_{\alpha,a} \cup \widehat{\Sigma}_{\alpha,a}$ is an eigenvalue of the Neumann--Poincar\'e operator $\K_\alpha^\ast \colon L^{2,-a}(\Gamma_\alpha) \to L^{2,-a}(\Gamma_\alpha)$ of infinite multiplicity. For $1 < a < 3$, every such point is an eigenvalue of $\K_\alpha \colon L^{2,a}(\Gamma_\alpha) \to L^{2,a}(\Gamma_\alpha)$ of infinite multiplicity. For $a=1$, $\K_\alpha \colon L^{2,1}(\Gamma_\alpha) \to L^{2,1}(\Gamma_\alpha)$ is self-adjoint and $\widehat{\Sigma}_{\alpha,a} = \Sigma_{\alpha,a}$ is a real interval,
$$\sigma(\K_\alpha, L^{2,1}(\Gamma_\alpha)) = \sigma_{\ess}(\K_\alpha, L^{2,1}(\Gamma_\alpha)) =[-|1 - \alpha/\pi|, |1 - \alpha/\pi|].$$
Furthermore, $\K_\alpha \colon L^{2,a}(\Gamma_\alpha) \to L^{2,a}(\Gamma_\alpha)$ is normaloid, 
$$\|\K_\alpha\|_{B(L^{2,a}(\Gamma_\alpha))} = |\sigma(\K_\alpha, L^{2,a}(\Gamma_\alpha))| = \left|\frac{\sin\left( (1-a)\frac{\pi - \alpha}{2} \right)}{\sin\left( (1-a) \frac{\pi}{2} \right)}\right|.$$
\end{thm}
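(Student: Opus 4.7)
The plan is to unitarily transfer $\K_\alpha$ to the multiplication-operator setting of Proposition~\ref{prop:specmult}, apply Proposition~\ref{prop:Tspectrum} to compute $\sigma(T_{\alpha,a}^\pm)$, and finally invoke Lemma~\ref{lem:realeig} to exclude eigenvalues exterior to $\widehat{\Sigma}_{\alpha,a}$. First I would note that the anti-diagonal block structure~\eqref{eq:Kmatrixdecomp} is unitarily equivalent to $K_\alpha \oplus (-K_\alpha)$, so the spectrum and essential spectrum of $\K_\alpha$ split as $\sigma(K_\alpha) \cup (-\sigma(K_\alpha))$ and $\sigma_{\ess}(K_\alpha) \cup (-\sigma_{\ess}(K_\alpha))$, respectively. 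The unitary equivalence~\eqref{eq:Kunitary} combined with Propositions~\ref{prop:conv} and~\ref{prop:specmult} identifies $K_\alpha$ on $L^{2,a}(dx\,dz)$ with $M_{T_{\alpha,a}^-}\oplus M_{T_{\alpha,a}^+}$ on $\mathcal{S}_2 \oplus \mathcal{S}_2$ and yields
$$\sigma(K_\alpha) \;=\; \sigma_{\ess}(K_\alpha) \;=\; \sigma(T_{\alpha,a}^-) \cup \sigma(T_{\alpha,a}^+),$$
so the identity $\sigma(\K_\alpha) = \sigma_{\ess}(\K_\alpha)$ is automatic.

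Next I would apply Proposition~\ref{prop:Tspectrum}: $\sigma_{\ess}(T_{\alpha,a}^\pm) = \Sigma_{\alpha,a}$, and on the complement the Fredholm index equals $W(\Sigma_{\alpha,a},\lambda)$. Interior points of $\Sigma_{\alpha,a}$ have nonzero winding, so $\widehat{\Sigma}_{\alpha,a} \subseteq \sigma(T_{\alpha,a}^\pm)$; on the unbounded exterior component, the winding vanishes and $T_{\alpha,a}^\pm - \lambda$ is invertible for $|\lambda|$ large, so by the analytic Fredholm theorem any spectrum there is a discrete set of isolated eigenvalues of finite multiplicity. I would first handle $a = 1$: the kernel in~\eqref{eq:Kformula} exhibits $K_\alpha$ as self-adjoint on $L^{2,1}(dx\,dz)$, and since $\Sigma_{\alpha,1}$ is the real interval between $0$ and $\alpha/\pi - 1$ while Lemma~\ref{lem:l2norm} caps the norm at $|1-\alpha/\pi|$, one concludes $\sigma(\K_\alpha, L^{2,1}(\Gamma_\alpha)) = [-|1-\alpha/\pi|, |1-\alpha/\pi|]$; this establishes both the spectral identity and the (vacuous) interior claim in the self-adjoint case.

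For general $a$, suppose $\lambda' \in \sigma(T_{\alpha,a}^\pm)$ is exterior to $\widehat{\Sigma}_{\alpha,a}$. Then $T_{\alpha,a}^\pm - \lambda'$ is Fredholm of index zero with nontrivial kernel, so $\overline{\lambda'}$ is an eigenvalue of $(T_{\alpha,a}^\pm)^\ast$. The last part of Proposition~\ref{prop:specmult}, applied to $T_{\alpha,a}^\pm$ (when $-1 < a \leq 1$) or to $(T_{\alpha,a}^\pm)^\ast$ (when $1 < a < 3$), combined with the unitary equivalence and the block structure, produces an infinite-multiplicity eigenvalue of $\K_\alpha$ on $L^{2,a}(\Gamma_\alpha)$ or of $\K_\alpha^\ast$ on $L^{2,-a}(\Gamma_\alpha)$, at $\lambda'$ or at $\overline{\lambda'}$. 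Lemma~\ref{lem:realeig} then forces this eigenvalue to lie in $\sigma(\K_\alpha, L^{2,1}(\Gamma_\alpha)) = [-|1-\alpha/\pi|, |1-\alpha/\pi|]$. Because the curves are nested around $\Sigma_{\alpha,1}$, this real interval is contained in $\widehat{\Sigma}_{\alpha,a} \cup (-\widehat{\Sigma}_{\alpha,a})$, which is exactly what is needed for the stated identity $\sigma(\K_\alpha, L^{2,a}(\Gamma_\alpha)) = \widehat{\Sigma}_{\alpha,a} \cup (-\widehat{\Sigma}_{\alpha,a})$. The infinite-multiplicity claim for interior points follows from the sign of the winding: positive for $-1 < a < 1$ gives a kernel for $T_{\alpha,a}^\pm - \lambda$ and hence, via Proposition~\ref{prop:specmult} applied to $(T_{\alpha,a}^\pm)^\ast$, an infinite-multiplicity eigenvalue of $\K_\alpha^\ast$ on $L^{2,-a}(\Gamma_\alpha)$; negative for $1 < a < 3$ gives the analogous conclusion for $\K_\alpha$ on $L^{2,a}(\Gamma_\alpha)$. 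The normaloid identity is immediate from Lemma~\ref{lem:l2norm} (the upper bound) together with the fact that $\xi = 0$ on $\Sigma_{\alpha,a}$ realizes that modulus and lies in the spectrum.

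I expect the exclusion of exterior eigenvalues to be the hard step. Lemma~\ref{lem:realeig} is formulated asymmetrically for the two regimes $-1 < a \leq 1$ and $1 < a < 3$, and Proposition~\ref{prop:specmult} introduces a complex conjugation when passing between eigenvalues of $T$, of $M_T$, and of $T^\ast$. Keeping the dualities, the conjugations, and the $\pm$ signs aligned --- and crucially accepting that $[-|1-\alpha/\pi|, |1-\alpha/\pi|]$ need not lie inside $\widehat{\Sigma}_{\alpha,a}$ itself, only inside $\widehat{\Sigma}_{\alpha,a} \cup (-\widehat{\Sigma}_{\alpha,a})$ --- is the technical heart of the argument.
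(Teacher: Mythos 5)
Your proposal is correct and follows essentially the same route as the paper's proof: block-diagonalize $\K_\alpha$ into $\pm K_\alpha$, transfer to the multiplication operators $M_{T_{\alpha,a}^\pm}$ via Propositions~\ref{prop:conv} and \ref{prop:specmult}, read off the essential spectrum and index from Proposition~\ref{prop:Tspectrum}, convert nonzero index at interior points into infinite-multiplicity eigenvalues of $\K_\alpha$ or $\K_\alpha^\ast$ according to the sign of the winding, and exclude exterior spectrum by combining index zero with Lemma~\ref{lem:realeig} and the norm bound of Lemma~\ref{lem:l2norm}. The only cosmetic differences are your separate anchoring of the $a=1$ case with the nesting of the curves $\Sigma_{\alpha,a}$ (the paper instead notes directly that all real $\lambda$ within the $L^{2,a}$ norm bound lie in $-\widehat{\Sigma}_{\alpha,a}\cup\widehat{\Sigma}_{\alpha,a}$) and the superfluous appeal to the analytic Fredholm theorem, since only ``index zero and not invertible implies nontrivial kernel'' is needed.
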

\begin{proof}
By equation~\eqref{eq:Kmatrixdecomp}, $\K_\alpha - \lambda$ is invertible (Fredholm) on $L^{2,a}(\Gamma_\alpha)$ if and only if $K_\alpha - \lambda$ and $K_\alpha + \lambda$ are both invertible (Fredholm) on $L^{2,a}(dx \, dz)$.
Since, by Propositions~\ref{prop:specmult} and \ref{prop:Tspectrum},
$$\sigma_{\ess}(K_\alpha, L^{2,a}(dx \, dz)) = \sigma_{\ess}\begin{pmatrix} 
M_{T_{\alpha,a}^{-}} & 0\\
0 & M_{T_{\alpha,a}^{+}}
\end{pmatrix} = \sigma(T_{\alpha,a}^{-}) \cup \sigma(T_{\alpha,a}^+) \supset \widehat{\Sigma}_{\alpha,a},$$
we see that $-\widehat{\Sigma}_{\alpha,a} \cup \widehat{\Sigma}_{\alpha,a} \subset \sigma_{\ess}(\K_\alpha, L^{2,a}(\Gamma_\alpha))$. 

Suppose that $\lambda \in \widehat{\Sigma}_{\alpha,a} \setminus \Sigma_{\alpha,a}$. If $-1 < a < 1$, then 
$$\ind(T_{\alpha,a}^+ - \lambda) = \ind(T_{\alpha,a}^+ - \overline{\lambda}) = 1,$$
 so that $\overline{\lambda}$ is an eigenvalue of $T_{\alpha,a}^+$. Hence, by Proposition~\ref{prop:specmult}, $\lambda$ is an eigenvalue of infinite multiplicity of $M_{T_{\alpha,a}^+}^\ast = M_{(T_{\alpha,a}^+)^\ast}$, and thus also of $\K_\alpha^\ast \colon L^{2,-a}(\Gamma_\alpha) \to L^{2,-a}(\Gamma_\alpha)$. If instead $1 < a < 3$, then $\ind(T_{\alpha,a}^+ - \lambda) = -1$, and hence $\overline{\lambda}$ is an eigenvalue of $(T_{\alpha,a}^+)^\ast$. Again using Proposition~\ref{prop:specmult}, we conclude that $\lambda$ is an eigenvalue of infinite multiplicity of $\K_\alpha \colon L^{2,a}(\Gamma_\alpha) \to L^{2,a}(\Gamma_\alpha)$. The case when $\lambda$ lies in the interior of $-\widehat{\Sigma}_{\alpha,a}$ is analogous.

Finally, suppose that $\lambda \in \sigma(\K_\alpha, L^{2,a}(\Gamma_\alpha))$, but that $\lambda \notin -\widehat{\Sigma}_{\alpha,a} \cup \widehat{\Sigma}_{\alpha,a}$. Without loss of generality we may suppose that $\lambda \in \sigma(M_{T_{\alpha,a}^+}) = \sigma(T_{\alpha,a}^+)$. Then, since $\ind(T_{\alpha,a}^+ - \lambda) = 0$, $\lambda$ is an eigenvalue of $T_{\alpha,a}^+$ and $\overline{\lambda}$ an eigenvalue of $(T_{\alpha,a}^+)^\ast$. Thus, $\lambda$ is an eigenvalue of $M_{T_{\alpha,a}^+}$ and $\overline{\lambda}$ an eigenvalue of $M_{(T_{\alpha,a}^+)^\ast}$, immediately implying that $\lambda$ is an eigenvalue of $\K_\alpha \colon L^{2,a}(\Gamma_\alpha) \to L^{2,a}(\Gamma_\alpha)$ and $\overline{\lambda}$ an eigenvalue of $\K_\alpha^\ast \colon L^{2,-a}(\Gamma_\alpha) \to L^{2,-a}(\Gamma_\alpha)$. By Lemma~\ref{lem:realeig} we conclude that $\lambda \in \R$. On the other hand, by Lemma~\ref{lem:l2norm} we have that 
\begin{equation} \label{eq:lambdaineqpf}
|\lambda| \leq \|\K_\alpha\|_{B(L^{2,a}(\Gamma_{\alpha}))} = \|K_\alpha\|_{B(L^{2,a}(dx \, dz))} \leq \left|\frac{\sin\left( (1-a)\frac{\pi - \alpha}{2} \right)}{\sin\left( (1-a) \frac{\pi}{2} \right)}\right|.
\end{equation}
However, all real $\lambda$ satisfying \eqref{eq:lambdaineqpf} are contained in $-\widehat{\Sigma}_{\alpha,a} \cup \widehat{\Sigma}_{\alpha,a}$, a contradiction. The formula for the norm and spectral radius of $\K_\alpha \colon L^{2,a}(\Gamma_\alpha) \to L^{2,a}(\Gamma_\alpha)$ also follows from this last statement.
\end{proof}
\begin{remark}
By the symmetry $\Sigma_{\alpha, a} = \Sigma_{\alpha, 2-a}$ and the increasing nature of the curves $\Sigma_{\alpha, a}$, $1 \leq a < 3$, it is clear that Theorem~\ref{thm:main1} implies Theorem~\ref{thmx:A}.
\end{remark}
\begin{remark}
For an unbounded Lipschitz graph $\Gamma$, L. Escauriaza and M. Mitrea \cite{EM04} showed that $\sigma(\K, L^2(\Gamma))$ is contained in a certain hyperbola which only depends on the Lipschitz character of $\Gamma$. Perhaps unsurprisingly, Theorem~\ref{thm:main1} shows that their result is sharp for the wedge boundaries $\Gamma_{\alpha}$.
\end{remark}
To give the application of Theorem~\ref{thm:main1} to the transmission problem we need to recall some of the layer potential theory of the unbounded Lipschitz graph $\Gamma_\alpha$ \cite{DK87}. For a function $V$ on $\Gamma_{\alpha,+}$ or $\Gamma_{\alpha,-}$, the non-tangential maximal function $\mathcal{M}^{\pm}V \colon \Gamma_\alpha \to [0,\infty]$ is given by
$$\mathcal{M}^\pm (V)(r) = \sup\{|V(r')| \, : \, r' \in \Gamma_{\alpha,\pm}, \; |r'-r| \leq 2\dist(r',\Gamma_{\alpha})\}, \quad r \in \Gamma_{\alpha}.$$
Consider the two spaces of harmonic functions
$$\dot{h}^1(\Gamma_{\alpha, \pm}) = \left\{\Delta U = 0 \textrm{ in } \Gamma_{\alpha, \pm} \, : \, \mathcal{M}^\pm (\nabla U) \in L^2(\Gamma_{\alpha})\right\},$$
which we implicitly consider as quotient spaces over the constant functions. Every $U \in \dot{h}^1(\Gamma_{\alpha, \pm})$ has non-tangential boundary values and outward normal derivatives pointwise almost everywhere on $\Gamma_{\alpha}$. That is,
\begin{equation} \label{eq:nttrace}
\Tr_\pm U(r) = \textrm{n.t.-}\lim_{r' \to r} U(r'), \quad \partial_n^\pm U(r) = \textrm{n.t.-}\lim_{r' \to r} n(r) \cdot \nabla U(r')
\end{equation}
exist for almost all $r \in \Gamma_{\alpha}$, where the convergence takes place in all non-tangential regions 
$$\{r' \in \Gamma_{\alpha,+} \, : \, |r' - r| \leq (1+c)\dist(r',\Gamma_{\alpha}) \}, \quad c > 0.$$
 Furthermore, $\partial_n^{\pm} U \in L^2(\Gamma_{\alpha})$, and $\Tr_\pm U$ belongs to the homogeneous Sobolev space $\dot{H}^1(\Gamma_{\alpha})$ on $\Gamma_\alpha$, consisting of the functions $f \in L^2_{\textrm{loc}}(\Gamma_\alpha)$ such that the (tangential) gradient of $f$ on $\Gamma_{\alpha}$ belongs to $L^2(\Gamma_{\alpha})$. A more precise definition of $\dot{H}^1(\Gamma_{\alpha})$ is given in Section~\ref{sec:energy}. $\dot{H}^1(\Gamma_{\alpha})$ is a Hilbert space modulo constants. Hence $\Tr_{\pm} \colon \dot{h}^1(\Gamma_{\alpha, \pm}) \to \dot{H}^1(\Gamma_{\alpha})$ and $\partial_n^\pm \colon \dot{h}^1(\Gamma_{\alpha, \pm}) \to L^2(\Gamma_{\alpha})$ are bounded operators.

The single layer potential is an isomorphism as a map $\Si_{\alpha} \colon L^2(\Gamma_{\alpha}) \to \dot{H}^1(\Gamma_{\alpha})$ \cite[Lemma 3.1]{DK87}. Evaluating $\Si_{\alpha} f$ instead on either $\Gamma_{\alpha, +}$ or $\Gamma_{\alpha, -}$, see equation~\eqref{eq:Sgendef}, yields isomorphisms $\Si_{\alpha} \colon L^2(\Gamma_{\alpha}) \to \dot{h}^1(\Gamma_{\alpha, \pm})$. Furthermore, by the weak singularity of the kernel, we have that
$$\Tr_{\pm} \Si_{\alpha} f = \Si_{\alpha} f \in \dot{H}^1(\Gamma_{\alpha}), \quad f \in L^2(\Gamma_{\alpha}).$$
In other words, the interior Dirichlet problem
$$\begin{cases}
\mathcal{M}^{+}(\nabla U) \in L^2(\Gamma_{\alpha}), \\
\Delta U = 0 \textrm{ in } \Gamma_{\alpha, +}, \\ 
\Tr_{+} U = g \in \dot{H}^1(\Gamma_{\alpha})
\end{cases}$$
is well posed (modulo constants), and the solution is of the form of a single layer potential, $U = \Si_{\alpha} f$, $f \in L^2(\Gamma_{\alpha})$. The same statement holds for the exterior Dirichlet problem.

To treat the transmission problem we make use of the jump formulas \cite[p. 149]{EM04}
\begin{equation} \label{eq:L2jumpformulas}
\partial_n^{+} \Si_\alpha f = \frac{1}{2}(f - \K_\alpha f), \quad \partial_n^{-} \Si_\alpha f(r) = \frac{1}{2}(-f - \K_\alpha f), \quad f \in L^2(\Gamma_{\alpha}).
\end{equation}

\begin{cor}
	Let $1 \neq \epsilon \in \mathbb{C}$ and $f \in \dot{H}^1(\Gamma_{\alpha})$. Then the transmission problem 
	$$\begin{cases} 
	\mathcal{M}^{+}(\nabla U), \mathcal{M}^{-}(\nabla U)  \in L^2(\Gamma_{\alpha}) \\
	\Delta U = 0 \textrm{ in } \Gamma_{\alpha, +}\cup\Gamma_{\alpha, -}, \\
	\Tr_+ U - \Tr_- U = f \in \dot{H}^1(\Gamma_{\alpha}), \\
	\partial_n^+ U - \epsilon \partial_n^- U = g \in L^2(\Gamma_{\alpha})
	\end{cases}$$
	is well posed (modulo constants) for all $g \in L^2(\Gamma_{\alpha})$ if and only if 
	\begin{equation} \label{eq:L2solvcond}
	\frac{1+\epsilon}{1-\epsilon} \notin -\widehat{\Sigma}_{\alpha,0} \cup \widehat{\Sigma}_{\alpha,0}.
	\end{equation}
\end{cor}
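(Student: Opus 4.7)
The plan is to use the single layer ansatz $U = \Si_\alpha h_+$ in $\Gamma_{\alpha,+}$ and $U = \Si_\alpha h_-$ in $\Gamma_{\alpha,-}$, with $h_\pm \in L^2(\Gamma_\alpha)$, to reduce the transmission problem to a single operator equation on $L^2(\Gamma_\alpha)$. Using that $\Si_\alpha \colon L^2(\Gamma_\alpha) \to \dot{H}^1(\Gamma_\alpha)$ is an isomorphism, the Dirichlet jump $\Tr_+ U - \Tr_- U = f$ becomes $\phi := h_+ - h_- = \Si_\alpha^{-1} f \in L^2(\Gamma_\alpha)$. Substituting the jump formulas \eqref{eq:L2jumpformulas} into the Neumann condition $\partial_n^+ U - \epsilon \partial_n^- U = g$ gives $(h_+ + \epsilon h_-) - \K_\alpha(h_+ - \epsilon h_-) = 2g$. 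Eliminating $h_+$ via $h_+ = h_- + \phi$ and dividing by $1-\epsilon$ (possible since $\epsilon \neq 1$) yields the single operator equation
\begin{equation*}
\left(\frac{1+\epsilon}{1-\epsilon} I - \K_\alpha\right) h_- = \frac{2g - \phi + \K_\alpha \phi}{1-\epsilon},
\end{equation*}
whose right-hand side ranges over all of $L^2(\Gamma_\alpha)$ as $(f,g)$ varies over $\dot{H}^1(\Gamma_\alpha) \times L^2(\Gamma_\alpha)$.

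The second step is to verify that the ansatz furnishes a bijection, modulo constants, between transmission solutions and pairs $(h_+, h_-) \in L^2(\Gamma_\alpha)^2$. Any ansatz with $h_\pm \in L^2(\Gamma_\alpha)$ automatically belongs to $\dot{h}^1(\Gamma_{\alpha,\pm})$, so it satisfies the required regularity and harmonicity. Conversely, given a solution $U$, the traces $\Tr_\pm U \in \dot{H}^1(\Gamma_\alpha)$ produce unique $h_\pm \in L^2(\Gamma_\alpha)$ via $\Si_\alpha^{-1}$, and well-posedness (mod constants) of the interior and exterior Dirichlet problems in $\dot{h}^1(\Gamma_{\alpha,\pm})$ forces $U = \Si_\alpha h_\pm$ on each side, up to additive constants that are absorbed in the mod-constants equivalence.

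Combining these steps, the transmission problem is well-posed for every $g \in L^2(\Gamma_\alpha)$ if and only if $\K_\alpha - \frac{1+\epsilon}{1-\epsilon} I$ is invertible on $L^2(\Gamma_\alpha)$, and Theorem~\ref{thmx:A} identifies this as equivalent to $\frac{1+\epsilon}{1-\epsilon} \notin -\widehat{\Sigma}_{\alpha,0} \cup \widehat{\Sigma}_{\alpha,0}$. If $\frac{1+\epsilon}{1-\epsilon}$ does lie in this set, the operator fails to be Fredholm, so either its range is not closed, in which case existence fails for some $g$, or its kernel is nontrivial, in which case picking $0 \neq h \in \ker(\K_\alpha - \frac{1+\epsilon}{1-\epsilon} I)$ and setting $h_+ = h_- = h$ yields $f = g = 0$ but the nontrivial solution $U = \Si_\alpha h$ (nontrivial because $\Si_\alpha \colon L^2 \to \dot{H}^1$ is injective), contradicting uniqueness. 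The main obstacle is the bijection step with its mod-constants bookkeeping and the separate handling of both non-Fredholm scenarios; once those are in place, the reduction is an algebraic manipulation and the spectral identification is immediate from Theorem~\ref{thmx:A}.
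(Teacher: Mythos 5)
Your proposal is correct and takes essentially the same route as the paper: the single layer ansatz on each side, the jump formulas \eqref{eq:L2jumpformulas}, the isomorphism $\Si_\alpha \colon L^2(\Gamma_\alpha) \to \dot{H}^1(\Gamma_\alpha)$, and reduction to the resolvent equation for $\K_\alpha$ on $L^2(\Gamma_\alpha)$, which Theorem~\ref{thm:main1} resolves. The only cosmetic difference is that you eliminate $h_+$ and solve for $h_-$ where the paper solves for $h_+$.
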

\begin{proof}
	By well-posedness of the Dirichlet problems there are densities $h_\pm \in L^2(\Gamma_{\alpha})$ and a constant $c$ such that $U = \Si_{\alpha} h_+ + c$ in $\Gamma_{\alpha, +}$ and $U = \Si_{\alpha} h_- + c$ in $\Gamma_{\alpha, -}$. By the jump formulas \eqref{eq:L2jumpformulas}, the transmission problem is then equivalent to the system
	$$\begin{cases}
	\Si_\alpha(h_+ - h_-) = f, \\
	\left(\K_{\alpha} - \frac{1+\epsilon}{1-\epsilon}\mathbb{I}\right) h_+ = -\frac{1}{1-\epsilon} \left[2g + \epsilon(\K_\alpha+\mathbb{I})(h_+ - h_-) \right]
	\end{cases}$$
	on $\Gamma_{\alpha}$, where $\mathbb{I}$ denotes the identity map. This system is uniquely solvable if and only if \eqref{eq:L2solvcond} holds, by Theorem~\ref{thm:main1} and the fact that $\Si_{\alpha} \colon L^2(\Gamma_{\alpha}) \to \dot{H}^1(\Gamma_{\alpha})$ is an isomorphism.
\end{proof}
\section{The energy space on unbounded Lipschitz graphs} \label{sec:energy}
\subsection{Identification with a fractional homogeneous Sobolev space}
In this section only, we will consider the more general situation where $\Gamma$ is an unbounded Lipschitz graph,
$$\Gamma = \{r = (x,y,z) \in \R^3 \, : \, z = \varphi(x,y) \},$$
where $\varphi \colon \R^2 \to \R$ is Lipschitz continuous. We think of the region above $\Gamma$ as the interior domain $\Gamma_+$, the region below it as the exterior $\Gamma_-$. The energy space $\E(\Gamma)$ in the case when $\Gamma$ is an infinite cone was important in \cite{HP18}, but was not shown to coincide with a Sobolev space. We therefore prove this identification for general Lipschitz graphs here. The considerations of this section apply equally well to the case of an unbounded Lipschitz graph embedded in $\R^n$, $n \geq 3$, but we restrict ourselves to $n=3$ for simplicity of notation.

Denote the space of compactly supported functions $f \in L^2(\Gamma)$ by $L^2_c(\Gamma)$. Then 
\begin{equation} \label{eq:energyproduct}
\langle \Si f, g \rangle_{L^2(\Gamma)} = \int_{\R^3} \nabla \Si f \cdot \overline{\nabla \Si g} \, dV, \quad f,g \in L^2_c(\Gamma).
\end{equation}
This is a standard identity which follows from Green's formula and the jump formulas \eqref{eq:L2jumpformulas} for the interior and exterior normal derivatives of $\Si f$ on $\Gamma$. When $\Gamma$ is smooth, bounded, and connected, equation \eqref{eq:energyproduct} may be found in \cite[Lemma 1]{KPS07}. The approximation procedure of \cite[Theorem~1.12]{Ver84} extends it to connected bounded Lipschitz surfaces, see for example \cite{PP14}. Finally, exhausting $\Gamma_+$ with a suitable increasing sequence of bounded Lipschitz domains yields \eqref{eq:energyproduct} for unbounded Lipschitz graphs. See Lemma~\ref{lem:plemelj} for a similar argument spelled out in greater detail.

Consider the inner product
\begin{equation} \label{eq:energyinner}
\langle f, g \rangle_{\E(\Gamma)} = \langle \Si f, g \rangle_{L^2(\Gamma)},
\end{equation}
initially for functions $f,g \in L^2_c(\Gamma)$.
Equation \eqref{eq:energyproduct} shows positive definiteness; if $\|f\|_{\E(\Gamma)}^2 = \langle f, f \rangle_{\E(\Gamma)} = 0$, then $\nabla \Si f(r) = 0$ for $r \in \R^3 \setminus \Gamma_\alpha$. However, this implies that $\partial_n^{+} \Si f = 0$, see equation \eqref{eq:nttrace}, which, unless $f=0$, is incompatible with the estimate
$$\|\partial_n^{+} \Si f\|_{L^2(\Gamma)} \gtrsim \|f\|_{L^2(\Gamma)}$$
from \cite{Ken85}. We define the energy space $\E(\Gamma)$ as the completion of $L^2_c(\Gamma)$ under this inner product.

When $\Gamma$ is a connected bounded Lipschitz surface, the energy space $\E(\Gamma)$ consists precisely of the distributions $f$ on $\Gamma$ in the inhomogeneous Sobolev space $H^{-1/2}(\Gamma)$ \cite{Cost07}. We will show that for an unbounded Lipschitz graph $\Gamma$ this remains true upon replacing $H^{-1/2}(\Gamma)$ by a homogeneous Sobolev space. 

Let $\mathcal{F} \colon L^2(\R^2) \to L^2(\R^2)$ denote the usual two-dimensional Fourier transform. For $0 \leq s \leq 1$, we define the homogeneous Sobolev space $\dot{H}^{s}(\R^2)$ as the completion of $C^\infty_c(\R^2)$ under the norm
\begin{equation} \label{eq:sobolevr2norm}
\|f\|_{\dot{H}^{s}(\R^2)}^2 = \int_{\R^2}|\mathcal{F}f(\xi)|^2 |\xi|^{2s} d\xi.
\end{equation}
We refer to \cite[Ch.~1]{BCD11} for the basics of homogeneous Sobolev spaces.
When $0 < s < 1$, the norm can also be computed as a Slobodeckij norm, see for example \cite[Proposition 3.4]{NEPV12},
$$\|f\|_{\dot{H}^{s}(\R^2)}^2 = c_s \int_{\R^2} \int_{\R^2}  \frac{|f(r) - f(r')|^2}{|r-r'|^{2(1+s)}} \, dr \, dr', \quad 0 < s < 1, \; f\in C^\infty_c(\R^2),$$
where $c_s$ is a constant depending on $s$.
For $0 \leq s < 1$, we emphasize that the completion  $\dot{H}^{s}(\R^2)$ is a space of functions. In fact, there is an injective embedding of $\dot{H}^{s}(\R^2)$ into $L^{2/(1-s)}(\R^2)$ \cite[Theorem~2.1]{EL12}. For $s=1$, $\dot{H}^{1}(\R^2)$ is the quotient of a semi-Hilbert space of functions with the subspace of constant functions. More precisely, $\dot{H}^{1}(\R^2)$ is the Hilbert space of $L^2_{\textrm{loc}}(\R^2)$-functions $f$ modulo constants such that $\nabla f \in L^2(\R^2)$. We define the negative index spaces $\dot{H}^{-s}(\R^2)$ as the dual spaces of $\dot{H}^{s}(\R^2)$ with respect to the $L^2(\R^2)$-pairing. Note that \eqref{eq:sobolevr2norm} remains valid for $-1 \leq s < 0$, in the sense that the Fourier transform extends to a unitary
\begin{equation} \label{eq:sobolevr2norm2}
\mathcal{F} \colon \dot{H}^s(\R^2) \to L^2(\R^2, |\xi|^{2s} \, d\xi), \quad -1 \leq s \leq 1.
\end{equation}

Alternatively, homogeneous Sobolev spaces may be understood in terms of the Riesz potential \cite[Section~3]{NEPV12}. For $0 < s \leq 1$, the Riesz potential is given by
\begin{equation} \label{eq:rieszpotdef}
\mathcal{I}_s f(r) = (-\Delta)^{s/2}f(r) = \mathcal{F}^{-1}(|\xi|^{-s}\mathcal{F}f)(r) = c_s' \int_{\R^2} \frac{f(r')}{|r-r'|^{2-s}} \, dr', \quad r \in \R^2.
\end{equation}
where $c_s'$ is a constant depending on $s$. Clearly, $\mathcal{I}_s \colon L^2(\R^2) \to \dot{H}^{s}(\R^2)$ is a unitary map, and by duality, so is $\mathcal{I}_s \colon \dot{H}^{-s}(\R^2) \to L^2(\R^2)$. 

We naturally interpet functions $f$ on $\Gamma$ as functions on $\R^2$, by letting 
$$\Lambda f(x,y) = f(x,y,\varphi(x,y)), \quad(x,y) \in \R^2.$$ For $0 \leq s \leq 1$, we let $\dot{H}^{s}(\Gamma) = \Lambda^{-1}\dot{H}^{s}(\R^2)$, in the sense that $\dot{H}^{s}(\Gamma)$ is the completion of $\Lambda^{-1}C_c^\infty(\R^2)$ under the norm $\|f\|_{\dot{H}^{s}(\Gamma)} = \|\Lambda f\|_{\dot{H}^{s}(\R^2)}$. We define $\dot{H}^{-s}(\Gamma)$ as the dual of $\dot{H}^{s}(\Gamma)$ with respect to the $L^2(\Gamma)$-pairing. 
	
\begin{lem} \label{lem:l2cdense}
Every function $g \in L^2_c(\Gamma)$ induces a distinct element $\ell_g \in \dot{H}^{-1/2}(\Gamma)$,
$$\ell_g(f) = \langle f, g \rangle_{L^2(\Gamma)}, \quad f \in \Lambda^{-1}C_c^\infty(\R^2),$$
and 
\begin{equation} \label{eq:normgamma}
\|\ell_g\|_{\dot{H}^{-1/2}(\Gamma)} = \|\rho \Lambda g\|_{\dot{H}^{-1/2}(\R^2)}, \quad g \in L^2_c(\Gamma),
\end{equation}
where
\begin{equation} \label{eq:rhodef}
\rho(x,y) = (1 + |\nabla \varphi(x,y)|^2)^{1/2}.
\end{equation}
The space of all such functionals is dense in $\dot{H}^{-1/2}(\Gamma)$.
\end{lem}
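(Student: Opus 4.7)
The plan is to reduce the entire statement to claims on $\R^2$ via the Lipschitz parametrization $\Lambda$, using that the surface measure on $\Gamma$ pulls back to $\rho\, dx\, dy$. For $f \in \Lambda^{-1}C_c^\infty(\R^2)$ and $g \in L^2_c(\Gamma)$, a direct change of variables gives
\[
\ell_g(f) = \int_\Gamma f\bar g\, d\sigma = \langle \Lambda f, \rho \Lambda g\rangle_{L^2(\R^2)},
\]
so the assignment $g \mapsto \ell_g$ factors through $g \mapsto \rho \Lambda g$. Since $\Lambda \colon \dot{H}^{1/2}(\Gamma) \to \dot{H}^{1/2}(\R^2)$ is a surjective isometry by definition, the lemma reduces to showing that this factor map sends $L^2_c(\Gamma)$ injectively into $\dot{H}^{-1/2}(\R^2)$ with dense range, and that $\|\ell_g\|_{\dot{H}^{-1/2}(\Gamma)} = \|\rho \Lambda g\|_{\dot{H}^{-1/2}(\R^2)}$.

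For membership in $\dot{H}^{-1/2}(\R^2)$: since $\rho = (1+|\nabla\varphi|^2)^{1/2}$ is bounded above and below by positive constants, $\rho \Lambda g$ lies in $L^2_c(\R^2) \subset L^{4/3}(\R^2)$ by H\"older. The Sobolev embedding $\dot{H}^{1/2}(\R^2) \hookrightarrow L^4(\R^2)$ then dualizes to $L^{4/3}(\R^2) \hookrightarrow \dot{H}^{-1/2}(\R^2)$, producing $\rho \Lambda g \in \dot{H}^{-1/2}(\R^2)$ with finite norm. The norm identity \eqref{eq:normgamma} is obtained by writing
\[
\|\ell_g\|_{\dot{H}^{-1/2}(\Gamma)} = \sup_{\|f\|_{\dot{H}^{1/2}(\Gamma)}\leq 1} |\ell_g(f)| = \sup_{\|\psi\|_{\dot{H}^{1/2}(\R^2)}\leq 1} |\langle \psi, \rho\Lambda g\rangle_{L^2(\R^2)}| = \|\rho\Lambda g\|_{\dot{H}^{-1/2}(\R^2)},
\]
using density of $C_c^\infty(\R^2)$ in $\dot{H}^{1/2}(\R^2)$ to pass between the suprema. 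Distinctness is then immediate: if $\ell_g = 0$, then $\rho \Lambda g$ annihilates $C_c^\infty(\R^2)$ in the $L^2$-pairing, so $\rho \Lambda g = 0$ a.e., and $\rho > 0$ forces $g = 0$.

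For density, I will apply Hahn--Banach. Suppose $F \in \dot{H}^{1/2}(\Gamma)$ annihilates every $\ell_g$; the goal is to deduce $F = 0$. Transferred to $\R^2$, the hypothesis reads $\langle \Lambda F, \rho \Lambda g\rangle_{L^2(\R^2)} = 0$ for every $g \in L^2_c(\Gamma)$, and $\Lambda F \in L^4(\R^2) \subset L^2_{\mathrm{loc}}(\R^2)$ by the same Sobolev embedding. Since $\rho$ is bounded above and below, $\{\rho \Lambda g : g \in L^2_c(\Gamma)\}$ coincides with $L^2_c(\R^2)$; in particular it contains $\chi_B \overline{\Lambda F}$ for every ball $B \subset \R^2$. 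Testing against these yields $\int_B |\Lambda F|^2 = 0$ for every $B$, so $\Lambda F = 0$ a.e., hence $F = 0$.

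The main obstacle is simply to line up the dual pairings between $\Gamma$ and $\R^2$ consistently. This is entirely absorbed by the observation that $\rho$ is bounded above and below away from $0$, which makes multiplication by $\rho$ an isomorphism on $L^2(\R^2)$ and on $L^{4/3}(\R^2)$; all the analytic content is then carried by the single Sobolev embedding $\dot{H}^{1/2}(\R^2) \hookrightarrow L^4(\R^2)$ together with its dual.
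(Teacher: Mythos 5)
Your proof is correct and follows essentially the same route as the paper's: reduce everything to $\R^2$ via $\Lambda$ and the identity $d\sigma = \rho\,dx\,dy$, use the embedding $\dot{H}^{1/2}(\R^2)\subset L^4(\R^2)$ and its dual to place $\rho\Lambda g \in L^2_c(\R^2)$ inside $\dot{H}^{-1/2}(\R^2)$, and derive density from the fact that elements of $\dot{H}^{1/2}(\R^2)$ are genuine functions (your Hahn--Banach step just makes the paper's one-line density remark explicit; the only blemish is a harmless conjugation slip, as the test function should be chosen so that $\rho\Lambda g = \chi_B\Lambda F$ in order to produce $\int_B|\Lambda F|^2$).
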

\begin{proof}
Note that $d\sigma(x,y) = \rho(x,y) \, dx \, dy$. Since $\dot{H}^{1/2}(\R^2) \subset L^4(\R^2)$, we deduce that $L^2_c(\R^2) \subset \dot{H}^{-1/2}(\R^2)$. Therefore $\ell_g$ induces a bounded functional on $\dot{H}^{1/2}(\Gamma)$, since $\rho \Lambda g \in L^2_c(\R^2)$ and 
$$\ell_g(f) = \langle \Lambda f, \rho \Lambda g \rangle_{L^2(\R^2)}.$$
This last formula also implies \eqref{eq:normgamma}. It is clear that $\ell_{g_1} = \ell_{g_2}$ if and only if $g_1 = g_2$ almost everywhere. The density follows from the fact that the elements of $\dot{H}^{1/2}(\R^2)$ are functions. 
\end{proof}
We interpret Lemma~\ref{lem:l2cdense} by saying that $L^2_c(\Gamma)$ is densely contained in $\dot{H}^{-1/2}(\Gamma)$, and we do not notationally distinguish between $\ell_g$ and $g$ from this point on.
 
 By the group property 
 $$\mathcal{I}_{s_1}\mathcal{I}_{s_2} = \mathcal{I}_{s_1+s_2}, \quad 0 < s_1, s_2 < 1,$$
 and the unitarity of $\mathcal{I}_{1/2} \colon \dot{H}^{-1/2}(\R^2) \to L^2(\R^2)$, we find that
 \begin{equation} \label{eq:energyspaceriesz}
 \langle \mathcal{I}_1 f, f \rangle_{L^2(\R^2)} = \|f\|_{\dot{H}^{-1/2}(\R^2)}^2, \quad f \in L^2_c(\R^2).
 \end{equation}
Furthermore, if $f \in L^2_c(\Gamma)$ is a nonnegative function, then $\mathcal{I}_s \rho \Lambda f \simeq \Lambda \Si f$, since the kernels of $\mathcal{I}_s \rho \Lambda$ and $\Lambda \Si$ are comparable. Comparing \eqref{eq:energyinner}, \eqref{eq:normgamma}, and \eqref{eq:energyspaceriesz} thus yields that 
  $$\|f\|_{\E(\Gamma)} \simeq \|f\|_{\dot{H}^{-1/2}(\Gamma)}, \quad 0 \leq f \in L^2_c(\Gamma).$$
To extend this estimate to general functions, we appeal to an interpolation argument, beginning with the following lemma.
\begin{lem} \label{lem:Sdensedef}
The space
$$L^2_{c,0}(\Gamma) = \left\{f \in L^2_c(\Gamma) \, : \, \int_{\Gamma} f \, d\sigma = 0\right\}$$
is contained and dense in $\dot{H}^{-1}(\Gamma)$, $\dot{H}^{-1/2}(\Gamma)$, $\E(\Gamma)$, and $L^2(\Gamma)$. Furthermore, $\Si$ maps $L^2_{c,0}(\Gamma)$ into $L^2(\Gamma)$, and if $f \in L^2_{c,0}(\Gamma)$, then $\Si f (r) = O(|r|^{-2})$ as $r \to\infty$.
\end{lem}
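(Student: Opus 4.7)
The plan is to first establish the two estimates on $\Si f$ via a multipole expansion, then handle containment for each of the four spaces, and finally dispatch the density claims with a single truncation-and-correction scheme, treating $\dot{H}^{-1}(\Gamma)$ separately as the main obstacle. For $f \in L^2_{c,0}(\Gamma)$ supported in $\{|r'| \leq N\}$, one uses $\int f \, d\sigma = 0$ to rewrite
$$\Si f(r) = \frac{1}{4\pi}\int_{\Gamma}\left(\frac{1}{|r-r'|}-\frac{1}{|r|}\right) f(r') \, d\sigma(r'),$$
and Taylor-expands the kernel for $|r| \gg N$ to obtain $\Si f(r) = O(|r|^{-2})$. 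Combined with a standard local $L^2$-estimate for the Newton potential of a compactly supported $L^2$-density, this yields $\Si f \in L^2(\Gamma)$, settling the last two assertions.

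The containments are mostly routine: $L^2_{c,0} \subset L^2_c \subset L^2(\Gamma)$ is trivial; $L^2_c \subset \E(\Gamma)$ holds by the definition of $\E(\Gamma)$; and $L^2_c \subset \dot{H}^{-1/2}(\Gamma)$ is Lemma~\ref{lem:l2cdense}. The delicate case is $\dot{H}^{-1}(\Gamma)$: since $\dot{H}^1(\Gamma)$ is defined modulo constants, the pairing $g \mapsto \langle g, f \rangle_{L^2(\Gamma)}$ is well-defined on equivalence classes only when $\int f = 0$, so the mean-zero condition is essential here. Transferring to $\R^2$ via $\Lambda$, the question becomes whether $\rho \Lambda f \in \dot{H}^{-1}(\R^2)$, equivalently whether $\int |\xi|^{-2} |\mathcal{F}(\rho \Lambda f)(\xi)|^2 \, d\xi < \infty$. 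In two dimensions, $|\xi|^{-2}$ is not locally integrable at the origin, so without mean zero this would diverge; but for $f \in L^2_{c,0}$ one has $\mathcal{F}(\rho \Lambda f)(0) = 0$, and the $O(|\xi|^2)$-vanishing of the numerator at the origin exactly compensates.

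The density claims in $L^2(\Gamma)$, $\dot{H}^{-1/2}(\Gamma)$, and $\E(\Gamma)$ are handled uniformly. Given a target $g$, first approximate by $g\chi_{\{|r| \leq N\}}$, then enforce the vanishing integral by subtracting $c_R \psi_R$, where $\psi_R$ is an annular cutoff supported in $\{R < |r| < 2R\} \cap \Gamma$ with $R \gg N$, and $c_R$ chosen to cancel the integral. Since $\int \psi_R \, d\sigma \sim R^2$, one has $c_R = O(R^{-2})$; it remains to check $c_R \psi_R \to 0$ in the relevant norm. For $L^2(\Gamma)$ this follows from $\|\psi_R\|_{L^2} \sim R$; for $\dot{H}^{-1/2}(\Gamma)$, the scaling $\mathcal{F}\psi_R(\xi) = R^2 \mathcal{F}\psi_1(R\xi)$ gives $\|\psi_R\|_{\dot{H}^{-1/2}(\R^2)} \sim R^{3/2}$; and for $\E(\Gamma)$ one invokes the preliminary equivalence $\|\cdot\|_{\E(\Gamma)} \simeq \|\cdot\|_{\dot{H}^{-1/2}(\Gamma)}$ for nonnegative test functions, established earlier in the section, applied to $\psi_R \geq 0$.

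The main obstacle is density in $\dot{H}^{-1}(\Gamma)$: the annular cutoffs $\psi_R$ do not even lie in $\dot{H}^{-1}(\Gamma)$ (they have nonzero mean), so the correction scheme above breaks down. I would argue on the Fourier side, where $\dot{H}^{-1}(\R^2) \simeq L^2(\R^2, |\xi|^{-2} d\xi)$ unitarily via $\mathcal{F}$. Elements of this weighted $L^2$-space are already constrained to vanish at $\xi = 0$ by the local integrability of the weight; standard low- and high-frequency truncation and mollification respecting this vanishing shows that the Fourier transforms of mean-zero Schwartz functions are dense. Inverting $\mathcal{F}$ and then approximating such Schwartz targets by $L^2_{c,0}$-functions (using a balanced pair of far-away annular cutoffs of opposite sign to preserve the mean-zero constraint while correcting the spatial support) transfers back through $\Lambda$ and $\rho$ to complete the proof.
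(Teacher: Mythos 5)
Most of your proposal tracks the paper's proof closely and is sound: the decay $\Si f(r)=O(|r|^{-2})$ via subtracting $|r|^{-1}\int f\,d\sigma$ and the resulting $L^2(\Gamma)$ membership, the containments (including the $\dot{H}^{-1}(\Gamma)$ case via $\mathcal{F}(\rho\Lambda f)(0)=0$ plus smoothness of the Fourier transform of a compactly supported density), and the density in $L^2(\Gamma)$, $\dot{H}^{-1/2}(\Gamma)$, and $\E(\Gamma)$ by correcting the mean with a far-away bump of small $\dot{H}^{-1/2}$- and $\E$-norm are all essentially the paper's argument (the paper uses $g_n=n^{-2}\chi_{(-n/2,n/2)^2}\rho^{-1}$ rather than an annulus, and bounds $\Si g_n\lesssim 1/n$ pointwise; your invocation of the nonnegative-function equivalence $\|\cdot\|_{\E}\simeq\|\cdot\|_{\dot H^{-1/2}}$, established just before the lemma, is an acceptable substitute). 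One small caveat: your scaling identity $\mathcal{F}\psi_R(\xi)=R^2\mathcal{F}\psi_1(R\xi)$ presumes $\rho\Lambda\psi_R$ is an exact dilate of a fixed profile, which fails on a general Lipschitz graph; replace it by, e.g., the embedding $L^{4/3}(\R^2)\hookrightarrow\dot H^{-1/2}(\R^2)$, which gives the same $R^{3/2}$ bound.

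The genuine gap is your density argument in $\dot{H}^{-1}(\Gamma)$. After reducing to mean-zero Schwartz targets, you must truncate to compact support while keeping the integral zero, and your proposed device --- ``a balanced pair of far-away annular cutoffs of opposite sign to preserve the mean-zero constraint'' --- cannot work as stated: a correction that is itself mean-zero cannot cancel the (generically nonzero) mass $\epsilon_N=\int_{|x|>N}h\,dx$ lost in truncation, while the individual pieces $h\chi_{\{|x|>N\}}$ and any single mass-carrying bump each fail to lie in $\dot{H}^{-1}$, so their norms cannot be estimated separately. To repair this you would need to estimate the $\dot{H}^{-1}$-norm of the combined mean-zero remainder directly, e.g.\ via the low-frequency bound $|\widehat{g}(\xi)|\lesssim|\xi|\int|x||g(x)|\,dx$ for mean-zero $g$, giving $\|g\|_{\dot H^{-1}(\R^2)}\lesssim\int|x||g(x)|\,dx+\|g\|_{L^2}$; this estimate is the missing ingredient and nothing in your sketch supplies it. The paper avoids all of this: since $\dot{H}^{-1}(\Gamma)$ is by definition the dual of the Hilbert space $\dot{H}^{1}(\Gamma)$ of locally integrable functions modulo constants, a subspace is dense iff its annihilator in $\dot{H}^{1}(\Gamma)$ is trivial, and any $u\in\dot{H}^{1}(\Gamma)$ orthogonal to every compactly supported mean-zero $L^2$ function is a.e.\ constant, hence the zero class. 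You should either adopt that one-line duality argument or supply the first-moment estimate above.
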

\begin{proof}
A direct proof that $L^2_{c,0}(\Gamma) \subset  \dot{H}^{-1}(\Gamma)$ goes as follows.
Suppose that $f \in L^2_{c,0}(\Gamma)$, and let $\rho$ be as in \eqref{eq:rhodef}. Then $\mathcal{F}(\rho \Lambda f)$ is real analytic on $\R^2$ and
$$\mathcal{F}(\rho \Lambda f)(0) = \int_{\Gamma} f \, d\sigma = 0.$$
Therefore $\mathcal{F}(\rho \Lambda f) \in L^2(\R^2, |\xi|^{-2} \, d\xi)$, from which it follows that $f \in \dot{H}^{-1}(\Gamma)$. That is, $g \mapsto \langle g, f \rangle_{L^2(\Gamma)}$ defines a continuous functional on $\dot{H}^{1}(\Gamma)$. The density of $L^2_{c,0}(\Gamma)$ in $\dot{H}^{-1}(\Gamma)$ is immediate from the fact that the elements of $\dot{H}^1(\R^2)$ are $L^2_{\textrm{loc}}(\R^2)$-functions modulo constants.

Next, let $g_n \in L^2_c(\Gamma)$ be defined by
$$g_n(x,y, \varphi(x,y)) = \frac{1}{n^2}\chi_{(-n/2,n/2)^2}(x,y) \rho(x,y)^{-1}.$$
Then $\int_{\Gamma} g_n \, d\sigma = 1$, and
$$0 \leq \Si g_n(r) \lesssim \frac{1}{n}, \quad r \in \Gamma,$$
by a straightforward estimate.
Hence 
$$\|g_n\|_{\E(\Gamma)} = \sqrt{\langle \Si g_n, g_n \rangle_{L^2(\Gamma)}} \lesssim \frac{1}{\sqrt{n}}.$$
Similarly we see from \eqref{eq:energyspaceriesz} that 
$$\|g_n\|_{\dot{H}^{-1/2}(\Gamma)} = \sqrt{\langle \mathcal{I}_1 (\rho \Lambda g), \rho \Lambda g \rangle_{L^2(\R^2)}} \lesssim \frac{1}{\sqrt{n}}.$$
Of course, $\|g_n\|_{L^2(\Gamma)} \lesssim 1/n$.
Now suppose that $f \in L^2_c(\Gamma)$ and let $d = \int_{\Gamma} f \, d\sigma$. Then $f - dg_n \in L^2_{c,0}(\Gamma)$ and $f - dg_n \to f$ in $\dot{H}^{-1/2}(\Gamma)$, $\E(\Gamma)$, and $L^2(\Gamma)$ as $n \to \infty$. This proves that $L^2_{c,0}(\Gamma)$ is dense in these three spaces, since $L^2_c(\Gamma)$ is.

Finally, suppose again that $f \in L^2_{c,0}(\Gamma)$. If $K$ is any bounded subgraph of $\Gamma$ containing $\supp f$, then $\Si f \in L^2(K)$ by the usual mapping properties of $\Si$ for connected bounded Lipschitz surfaces \cite{Ver84}. Hence we only need to check the behavior of $\Si f$ at infinity to finish the proof. Letting $\Si(r, r')$ be the kernel of $\Si$, note for $r' \in K$ that
$$\Si(r,r') = \frac{1}{4\pi}\frac{1}{|r|(1+ O(|r|^{-1}))} = \frac{1}{4\pi}|r|^{-1}(1+O(|r|^{-1})), \quad r \to \infty.$$
Therefore, since $\int_{\Gamma} f \, d\sigma = 0$, 
$$\Si f(r) = \int_{\Gamma} \Si(r, r') f(r') \, d\sigma(r') = O(|r|^{-2}), \quad r \to \infty.$$
It follows that $\Si f \in L^2(\Gamma)$.
\end{proof}
We are ready to state and prove the main theorem of this section. For the proof, note that the $J$-method, the $K$-method, and the complex method are all equivalent for interpolation of Hilbert spaces \cite{CHM15, McCar92}. We hence simply refer to the interpolation space $(\mathcal{H}_0, \mathcal{H}_1)_{\theta}$ of exponent $0 < \theta < 1$ between two compatible Hilbert spaces $\mathcal{H}_0$ and $\mathcal{H}_1$.
\begin{thm} \label{thm:homsobolev}
Suppose that $\Gamma$ is an unbounded Lipschitz graph. Then the energy space $\E(\Gamma)$ coincides with the homogeneous Sobolev space $\dot{H}^{-1/2}(\Gamma)$, $\E(\Gamma) \simeq \dot{H}^{-1/2}(\Gamma)$. More precisely, the inclusion of $L^2_c(\Gamma)$ into $\dot{H}^{-1/2}(\Gamma)$ extends to an isomorphism of $\E(\Gamma)$ onto $\dot{H}^{-1/2}(\Gamma)$.
\end{thm}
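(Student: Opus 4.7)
The plan is to upgrade Dahlberg--Kenig's isomorphism $\Si \colon L^2(\Gamma) \to \dot{H}^1(\Gamma)$ to an isomorphism $\Si \colon \dot{H}^{-1/2}(\Gamma) \to \dot{H}^{1/2}(\Gamma)$ via interpolation, and then to read off the norm equivalence on the common dense subspace $L^2_{c,0}(\Gamma)$ (density in both provided by Lemma~\ref{lem:Sdensedef}). Since the kernel of $\Si$ is symmetric, dualizing the Dahlberg--Kenig statement with respect to the $L^2$-pairings identifying $(L^2)^* \simeq L^2$ and $(\dot{H}^1)^* \simeq \dot{H}^{-1}$ produces a bounded invertible $\Si \colon \dot{H}^{-1}(\Gamma) \to L^2(\Gamma)$. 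Interpolation at parameter $1/2$ on each side, using
$$(L^2(\Gamma), \dot{H}^1(\Gamma))_{1/2} = \dot{H}^{1/2}(\Gamma), \qquad (\dot{H}^{-1}(\Gamma), L^2(\Gamma))_{1/2} = \dot{H}^{-1/2}(\Gamma),$$
transported from $\R^2$ along the parametrization $\Lambda$, then yields the desired bounded invertible $\Si \colon \dot{H}^{-1/2}(\Gamma) \to \dot{H}^{1/2}(\Gamma)$.

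Now for $f \in L^2_{c,0}(\Gamma)$, the $L^2$-pairing defining $\|f\|_{\E}^2 = \langle \Si f, f\rangle_{L^2(\Gamma)}$ coincides with the duality pairing between $\dot{H}^{1/2}(\Gamma)$ and $\dot{H}^{-1/2}(\Gamma)$, so Cauchy--Schwarz in this duality combined with boundedness of $\Si$ gives the upper bound
$$\|f\|_{\E(\Gamma)}^2 \leq \|\Si f\|_{\dot{H}^{1/2}(\Gamma)} \|f\|_{\dot{H}^{-1/2}(\Gamma)} \lesssim \|f\|^2_{\dot{H}^{-1/2}(\Gamma)}.$$
For the reverse bound, view $\Si$ via the Riesz duality as a bounded operator $A$ on $\dot{H}^{-1/2}(\Gamma)$ satisfying $\langle Af, g \rangle_{\dot{H}^{-1/2}} = \langle \Si f, g\rangle_{L^2}$. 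Then $A$ is bounded invertible by Step 1, Hermitian by kernel symmetry of $\Si$, and nonnegative since $\langle Af, f\rangle = \int_{\R^3}|\nabla \Si f|^2 \, dV \geq 0$ by \eqref{eq:energyproduct}. A positive self-adjoint bounded invertible operator on a Hilbert space is bounded below by spectral theory, giving $\langle Af, f\rangle \geq c \|f\|^2_{\dot{H}^{-1/2}(\Gamma)}$. Consequently $\|\cdot\|_{\E(\Gamma)} \simeq \|\cdot\|_{\dot{H}^{-1/2}(\Gamma)}$ on $L^2_{c,0}(\Gamma)$, and the identity map extends to the claimed isomorphism of completions.

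The main obstacle lies in justifying the interpolation identities on $\Gamma$ rather than on $\R^2$. On $\R^2$ they follow directly from \eqref{eq:sobolevr2norm2} and the $K$-method; on $\Gamma$ they require transporting along $\Lambda$, which in turn reduces to checking that multiplication by $\rho$ and $\rho^{-1}$ is bounded on each $\dot{H}^s(\R^2)$ for $|s| \leq 1$ -- this is where the Lipschitz regularity of $\varphi$ enters in an essential way. The endpoint $s = 1$ is slightly delicate because of the quotient by constants, but can be handled by working within $L^2_{c,0}(\Gamma)$, which sits inside every space of interest; the positive-$f$ comparison $\mathcal{I}_1 \rho \Lambda f \simeq \Lambda \Si f$ already recorded in the excerpt provides a useful sanity check for the resulting identification.
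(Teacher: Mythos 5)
Your argument stands or falls on the single claim that interpolating the two endpoint isomorphisms $\Si \colon L^2(\Gamma) \to \dot{H}^1(\Gamma)$ and $\Si \colon \dot{H}^{-1}(\Gamma) \to L^2(\Gamma)$ at $\theta = 1/2$ ``yields the desired bounded invertible'' $\Si \colon \dot{H}^{-1/2}(\Gamma) \to \dot{H}^{1/2}(\Gamma)$, and this is exactly the step that fails: complex/real interpolation of operators preserves boundedness but \emph{not} invertibility. To get invertibility of the interpolated operator you would need to interpolate the inverses as well, and for that the two inverses $(\Si|_{L^2})^{-1} \colon \dot{H}^1 \to L^2$ and $(\Si|_{\dot{H}^{-1}})^{-1} \colon L^2 \to \dot{H}^{-1}$ must agree on $\dot{H}^1(\Gamma) \cap L^2(\Gamma)$. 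That compatibility is not automatic here and is genuinely problematic, because the couples involved are incomparable: for $w \in \dot{H}^1 \cap L^2$ the element $u = (\Si|_{L^2})^{-1}w$ lies in $L^2(\Gamma)$, which is \emph{not} contained in $\dot{H}^{-1}(\Gamma)$ (an $L^2$ density need not even annihilate constants), so there is no a priori way to recognize $u$ as the $\dot{H}^{-1}$-inverse of $w$. The paper flags precisely this obstruction after equation~\eqref{eq:interpsetup3} (``It is not, however, possible at this stage to conclude that this operator is an isomorphism'') and takes a different route: it realizes the conjugated operator $M$ as a positive self-adjoint unbounded operator $R$ on $L^2(\R^2, d\xi)$, invokes the characterization of interpolation spaces between \emph{incomparable} Hilbert couples by powers of a self-adjoint operator (\cite[Theorem~3.3]{CHM15}, extending \cite[Theorem~15.1]{LM72}), identifies $R = T^{1/2}$, and only then reads off the two-sided estimate $\langle Mf, f \rangle \simeq \|f\|^2_{L^2(|\xi|^{-1}d\xi)}$. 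Your upper bound via Cauchy--Schwarz is fine (it needs only boundedness of the interpolated operator, which is \eqref{eq:interpsetup4}), and your deduction ``positive $+$ self-adjoint $+$ invertible $\Rightarrow$ bounded below'' is correct in itself, but the invertibility it relies on is unproven; the lower bound is where all the work lies.

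A secondary point: your proposed reduction of the interpolation identities on $\Gamma$ to the boundedness of multiplication by $\rho^{\pm 1}$ on $\dot{H}^s(\R^2)$ for all $|s| \leq 1$ is both false and unnecessary. For a merely Lipschitz $\varphi$, $\rho = (1+|\nabla\varphi|^2)^{1/2}$ is only an $L^\infty$ function bounded away from zero, and multiplication by such a function is not bounded on $\dot{H}^s(\R^2)$ for $s \neq 0$. The paper's definitions are arranged so that this issue never arises: $\dot{H}^s(\Gamma)$ for $s > 0$ carries the pullback norm (so $\Lambda$ is an isometry there), $\dot{H}^{-s}(\Gamma)$ is defined by $L^2(\Gamma)$-duality, and the factor $\rho$ only ever multiplies functions in $L^2$, where $\rho \simeq 1$ makes it a trivial isomorphism; this is the reason for the particular conjugation defining $M$ in \eqref{eq:interpsetup}.
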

\begin{proof}
The starting point is that $\Si \colon L^2(\Gamma) \to \dot{H}^{1}(\Gamma)$ is an isomorphism \cite[Lemma 3.1]{DK87}. Let $\widetilde{\Lambda} \colon L^2(\Gamma) \to L^2(\R^2)$ denote the unitary given by 
$$\widetilde{\Lambda} f =  \rho^{1/2} \Lambda f, \quad f \in L^2(\Gamma),$$
where $\rho$, as before, is given by $\rho(x,y)= (1 + |\nabla \varphi(x,y)|^2)^{1/2}$. Then
\begin{equation} \label{eq:interpsetup}
M := \mathcal{F} \rho^{-1/2} \widetilde{\Lambda} \Si \widetilde{\Lambda}^{-1} \rho^{-1/2}\mathcal{F}^{-1} \colon L^2(\R^2, d\xi) \to L^2(\R^2, |\xi|^2 \, d\xi).
\end{equation}
is an isomorphism, since multiplication by $\rho^{-1/2}$ on $L^2(\R^2)$ and $\Si \colon L^2(\Gamma) \to \dot{H}^{1}(\Gamma)$ are both isomorphisms. By \eqref{eq:energyproduct}, $M$ is symmetric with respect to the $L^2(\R^2, d\xi)$-pairing. Therefore, by duality, we can reformulate \eqref{eq:interpsetup} by saying that $M$ continuously extends to an isomorphism
\begin{equation} \label{eq:interpsetup2}
M \colon L^2(\R^2, |\xi|^{-2} \, d\xi) \to L^2(\R^2, d\xi).
\end{equation}
By Lemma~\ref{lem:Sdensedef}, $M$ is initially densely defined on 
$$\dom(M) = \mathcal{F} \rho^{1/2} \widetilde{\Lambda} L^2_{c,0}(\Gamma) \subset L^2(\R^2, d\xi) \cap L^2(\R^2, |\xi|^{-2} \,d\xi),$$
and the meaning of \eqref{eq:interpsetup2} is that $M$ extends continuously to an isomorphism. Interpolation between \eqref{eq:interpsetup} and \eqref{eq:interpsetup2} also gives that
\begin{equation} \label{eq:interpsetup3}
M \colon L^2(\R^2, |\xi|^{-1} \, d\xi) \to L^2(\R^2, |\xi| \, d\xi)
\end{equation}
is bounded. It is not, however, possible at this stage to conclude that this operator is an isomorphism. As a consequence of \eqref{eq:energyproduct} and \eqref{eq:interpsetup3} we conclude that
\begin{equation} \label{eq:interpsetup4}
0 < \langle Mf, f \rangle_{L^2(\R^2, \, d\xi)} \lesssim \|f\|_{L^2(\R^2, |\xi|^{-1} \, d\xi)}^2, \quad f \in \dom(M).
\end{equation}

We also want to consider $M$ as an unbounded operator on $L^2(\R^2, d\xi)$. To avoid confusion we call this operator $R$,
$$R \colon L^2(\R^2, d\xi) \to L^2(\R^2, d\xi), \quad Rf = Mf.$$
In view of \eqref{eq:interpsetup2}, we can let the domain of $R$ be
$$\dom(R) = L^2(\R^2, |\xi|^{-2} \, d\xi) \cap L^2(\R^2, d\xi).$$
The positivity of $R$ on $\dom(M)$ extends to $\dom(R)$. To see this, given $f \in \dom(R) \subset L^2(\R^2, |\xi|^{-1} \, d\xi)$, we may by Lemma~\ref{lem:Sdensedef} choose a sequence in $\dom(M)$, approximating $f$ in $L^2(\R^2, |\xi|^{-1} \, d\xi)$. By \eqref{eq:interpsetup3} and \eqref{eq:interpsetup4} we conclude that
$$\langle Rf, f \rangle_{L^2(\R^2, d\xi)} \geq 0, \quad f \in \dom(R).$$
The same argument shows that $R$ is a symmetric operator,
$$\langle Rf, g \rangle_{L^2(\R^2, d\xi)} = \langle f, Rg \rangle_{L^2(\R^2, d\xi)}, \quad f,g \in \dom(R).$$
Since the operator of \eqref{eq:interpsetup} is an isomorphism, the domain of $R^\ast$ is given by
$$\dom(R^\ast) = \{f \in L^2(\R^2, d\xi) \, : \, |\langle f, Rg \rangle_{L^2(\R^2, d\xi)}| \lesssim \|Rg\|_{L^2(\R^2, |\xi|^{2} \, d\xi)}, g \in \dom(R) \}.$$
The range of $R$ being dense in $L^2(\R^2, |\xi|^{2} \, d\xi)$, it follows that
$$\dom(R^\ast) = \dom(R).$$
We conclude that $R$ is a positive self-adjoint operator.

Consider now the Hilbert space $\mathcal{H}_1 = L^2(\R^2, d\xi)$ with its usual norm and $\mathcal{H}_0 = L^2(\R^2, |\xi|^{-2} d\xi)$ with the alternative norm
\begin{equation} \label{eq:altnormM}
\|f\|_{\mathcal{H}_0} = \|M f\|_{\mathcal{H}_1}, \quad f \in \mathcal{H}_0.
\end{equation}
We apply the characterization of the interpolation spaces $(\mathcal{H}_0, \mathcal{H}_1)_\theta$, $0 < \theta < 1$, given by \cite[Theorem 3.3]{CHM15}. It extends the usual characterization given in \cite[Theorem~15.1]{LM72} to the present situation in which $\mathcal{H}_0$ and $\mathcal{H}_1$ are incomparable. The conclusion\footnote{There is a slight mistake in the statement of \cite[Theorem 3.3]{CHM15} concerning $\dom(T)$, but it is easily corrected by inspecting its proof.} is that the relationship
\begin{equation} \label{eq:altnormT}
\langle T^{1/2}f, T^{1/2}g \rangle_{\mathcal{H}_1} = \langle f, g \rangle_{\mathcal{H}_0}, \quad f, g \in \mathcal{H}_0 \cap \mathcal{H}_1
\end{equation}
defines an unbounded, self-adjoint, positive operator $T \colon \mathcal{H}_1 \to \mathcal{H}_1$ whose square root has domain 
$$\dom(T^{1/2}) = \mathcal{H}_0 \cap \mathcal{H}_1 = \dom(R).$$
Furthermore, the norm of the interpolation space $(\mathcal{H}_0, \mathcal{H}_1)_{1/2}$ is given by
$$\|f\|_{(\mathcal{H}_0, \mathcal{H}_1)_{1/2}}^2 = \|T^{1/4}f\|_{\mathcal{H}_1}^2 = \langle T^{1/2}f, f \rangle_{\mathcal{H}_1}, \quad f \in \mathcal{H}_0 \cap \mathcal{H}_1.$$
By \eqref{eq:altnormM} and \eqref{eq:altnormT} we have that
$$\langle T^{1/2}f, T^{1/2}g \rangle_{\mathcal{H}_1} = \langle Rf, Rg \rangle_{\mathcal{H}_1}, \quad f, g \in \dom(T^{1/2}) = \dom(R).$$ 
Since $R$ is also positive and self-adjoint it must be that $R = T^{1/2}$, see for example \cite[Proposition~10.4]{Schm12}.

On the other hand, if we equip $\mathcal{H}_0 = L^2(\R^2, |\xi|^{-2} \, d\xi)$ with the usual norm, we know that the interpolation space is $L^2(\R^2, |\xi|^{-1} \, d\xi)$, and thus
$$\langle Mf, f\rangle_{\mathcal{H}_1} = \langle T^{1/2}f, f \rangle_{\mathcal{H}_1} \simeq \|f\|_{L^2(\R^2, |\xi|^{-1} \, d\xi)}^2, \quad f \in \dom(M).$$
Unraveling the definitions, this means that
$$\|f\|^2_{\E(\Gamma)} = \langle \Si f, f \rangle_{L^2(\Gamma)} \simeq \|\rho \Lambda f\|_{\dot{H}^{-1/2}(\R^2)}^2 = \|f\|_{\dot{H}^{-1/2}(\Gamma)}^2, \quad f\in L^2_{c,0}(\Gamma),$$
where the last equality is given by \eqref{eq:normgamma}.
Since $L^2_{c,0}(\Gamma)$ is dense in $\E(\Gamma)$ and $H^{-1/2}(\Gamma)$ by Lemma~\ref{lem:Sdensedef}, this proves the statement.
\end{proof}
\begin{remark}
When $\Gamma$ is a connected bounded Lipschitz surface, $\Si$ is an isomorphism of $L^2(\Gamma)$ onto the inhomogeneous Sobolev space $H^{1}(\Gamma)$ \cite[Theorem 3.3]{Ver84}, and $\E(\Gamma) \simeq H^{-1/2}(\Gamma)$ in this case.
\end{remark}
\subsection{Single layer potentials and the Dirichlet problem}\label{subsec:singpot}
It is implicit in the proof of Theorem~\ref{thm:homsobolev} that the isomorphism property of $\Si \colon L^2(\Gamma) \to \dot{H}^1(\Gamma)$ extends to the scale of homogeneous Sobolev spaces. When $\Gamma$ is a bounded Lipschitz surface the corresponding result is well known, see for example \cite[Theorem~8.1]{FMM98}.
\begin{cor} \label{cor:Sbdaryisom}
	Let $\Gamma$ be an unbounded Lipschitz graph. For every $0 \leq s \leq 1$,
	$$\Si \colon \dot{H}^{-s}(\Gamma) \to \dot{H}^{1-s}(\Gamma)$$
	is an isomorphism.
\end{cor}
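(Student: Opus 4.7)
The plan is to establish the two endpoint isomorphisms $s=0$ and $s=1$, and then to interpolate. The case $s=0$ is exactly the Dahlberg--Kenig isomorphism $\Si \colon L^2(\Gamma) \to \dot{H}^1(\Gamma)$ invoked at the start of the proof of Theorem~\ref{thm:homsobolev}.

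For the endpoint $s=1$, I would argue by duality. The kernel $(4\pi|r-r'|)^{-1}$ of $\Si$ is symmetric in $r$ and $r'$, so $\langle \Si f, g \rangle_{L^2(\Gamma)} = \langle f, \Si g \rangle_{L^2(\Gamma)}$ for all $f, g \in L^2_c(\Gamma)$. Taking the Banach space adjoint of the $s=0$ isomorphism with respect to the dualities $(L^2(\Gamma))^\ast = L^2(\Gamma)$ and $(\dot{H}^1(\Gamma))^\ast = \dot{H}^{-1}(\Gamma)$ thus recovers $\Si$ itself on the dense subspace $L^2_c(\Gamma)$, so $\Si$ extends continuously to an isomorphism $\dot{H}^{-1}(\Gamma) \to L^2(\Gamma)$. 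The density of $L^2_c(\Gamma)$ in $\dot{H}^{-1}(\Gamma)$ follows by a direct adaptation of Lemma~\ref{lem:l2cdense}, using the Slobodeckij-type characterization in the proof of Lemma~\ref{lem:Sdensedef}.

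For the intermediate case $0 < s < 1$, I would apply complex interpolation. After conjugation by the unitary $\mathcal{F} \rho^{1/2} \widetilde{\Lambda}$ from the proof of Theorem~\ref{thm:homsobolev}, each space $\dot{H}^t(\Gamma)$ becomes the weighted space $L^2(\R^2, |\xi|^{2t} \, d\xi)$ for $-1 \leq t \leq 1$. Standard interpolation of weighted Lebesgue spaces then yields $(\dot{H}^{s_0}(\Gamma), \dot{H}^{s_1}(\Gamma))_\theta \simeq \dot{H}^{(1-\theta)s_0 + \theta s_1}(\Gamma)$ for $0 \leq \theta \leq 1$. Interpolating the bounded endpoint maps $\Si \colon L^2(\Gamma) \to \dot{H}^1(\Gamma)$ and $\Si \colon \dot{H}^{-1}(\Gamma) \to L^2(\Gamma)$ at exponent $\theta = s$ gives boundedness of $\Si \colon \dot{H}^{-s}(\Gamma) \to \dot{H}^{1-s}(\Gamma)$, while interpolating the compatible inverse maps $\Si^{-1} \colon \dot{H}^1(\Gamma) \to L^2(\Gamma)$ and $\Si^{-1} \colon L^2(\Gamma) \to \dot{H}^{-1}(\Gamma)$ produces a bounded two-sided inverse. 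Hence $\Si$ is an isomorphism at every intermediate $s$.

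The main obstacle is the interpolation identity between the $\dot{H}^s(\Gamma)$-spaces, because the endpoint spaces are strictly incomparable, exactly as in the proof of Theorem~\ref{thm:homsobolev} (for instance, $\dot{H}^{-1}(\Gamma)$ contains distributions that are not functions, while $L^2(\Gamma)$ does not). I would resolve this exactly as there, by invoking \cite[Theorem 3.3]{CHM15} to interpret the interpolation space between the incomparable Hilbert pairs and transferring the computation to weighted $L^2$-spaces on $\R^2$ via the Fourier transform, where the identification of interpolation spaces is routine.
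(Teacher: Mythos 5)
Your endpoint analysis matches the paper's: $s=0$ is the Dahlberg--Kenig isomorphism, and $s=1$ is obtained by dualizing through the symmetry of the kernel of $\Si$, which is exactly how the paper passes from \eqref{eq:interpsetup} to \eqref{eq:interpsetup2}. The gap is at the intermediate exponents. Interpolating the two forward maps only yields \emph{boundedness} of $\Si \colon \dot{H}^{-s}(\Gamma) \to \dot{H}^{1-s}(\Gamma)$ --- this is precisely \eqref{eq:interpsetup3}, after which the paper explicitly remarks that it is not possible at that stage to conclude that the operator is an isomorphism. Your proposed remedy is to also interpolate the inverses, but invertibility is not an interpolation-stable property: interpolating the endpoint inverses produces a genuine two-sided inverse only if those inverses are \emph{compatible}, i.e.\ restrictions of a single linear map on $\dot{H}^{1}(\Gamma) + L^2(\Gamma)$, which is equivalent to injectivity of $\Si$ on the sum $L^2(\Gamma) + \dot{H}^{-1}(\Gamma)$. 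You assert this in passing (``the compatible inverse maps''), but it is exactly the non-trivial point; there are standard examples of operators invertible at both endpoints of a couple whose interpolated versions fail to be invertible, and they all fail at this step. The appeal to \cite[Theorem 3.3]{CHM15} does not close the gap either: that theorem characterizes the interpolation spaces of an incomparable Hilbert couple via an auxiliary positive operator, and says nothing about invertibility of an interpolated map.

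The paper's argument is built around this obstacle. It realizes the Fourier-conjugated single layer potential $M$ as a positive self-adjoint operator $R$ on $L^2(\R^2, d\xi)$ and identifies $R = T^{1/2}$, where $T$ is the operator furnished by \cite[Theorem 3.3]{CHM15} for the couple $(\mathcal{H}_0,\mathcal{H}_1)$ with $\mathcal{H}_0$ carrying the norm $\|Mf\|_{\mathcal{H}_1}$. Comparing with the standard norm on $\mathcal{H}_0$ then gives the \emph{two-sided} estimate $\|R^s f\|_{L^2(\R^2, d\xi)} \simeq \|f\|_{L^2(\R^2, |\xi|^{-2s}\,d\xi)}$, and together with the dense range of $R^s$ the factorization $R = R^{1-s}R^s$ exhibits $R$ as a composition of isomorphisms between the correct weighted spaces. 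To salvage your route you would have to prove the compatibility of the two inverses --- for instance by exploiting the symmetry of $M$ with respect to the $L^2(\R^2, d\xi)$-pairing and a density argument in the intersection of the target spaces --- but at that point you are essentially rebuilding the self-adjointness machinery that the paper already deploys.
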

\begin{proof}
	Following the proof of Theorem~\ref{thm:homsobolev}, we see for every $0 < s < 1$ that
	$$\| R^s f \|_{L^2(\R^2, d\xi)} = \|f\|_{(\mathcal{H}_0, \mathcal{H}_1)_{1-s}} \simeq \|f\|_{L^2(\R^2, |\xi|^{-2s} \, d\xi)}, \quad f \in \dom(R).$$
	$R \colon L^2(\R^2, d\xi) \to L^2(\R^2, d\xi)$ has dense range by the isomorphism property of \eqref{eq:interpsetup2}. It follows that $R^s$ extends to an isomorphism $R^s \colon L^{2}(\R^2,|\xi|^{-2s} \, d\xi) \to L^2(\R^2, d\xi)$, and, by duality, to an isomorphism $R^s \colon L^2(\R^2, d\xi) \to L^{2}(\R^2,|\xi|^{2s}).$ Thus $R$ extends to an isomorphism
	$$R = R^{1-s}R^s \colon L^{2}(\R^2,|\xi|^{-2s} \, d\xi) \to L^{2}(\R^2,|\xi|^{2(1-s)} \, d\xi).$$
	This is equivalent to the statement of the corollary.
\end{proof}
Consider the homogeneous Sobolev spaces on $\Gamma_+$ and $\Gamma_-$,
$$\dot{H}^1(\Gamma_\pm) = \left\{F \in L^2_{\textrm{loc}}(\Gamma_{\pm}) \, : \, \|F\|_{\dot{H}^1(\Gamma_\pm)}^2 = \int_{\Gamma_\pm} |\nabla F|^2 \, dV < \infty \right\}.$$
These are Hilbert spaces as quotient spaces over the constant functions. The subspaces of harmonic functions are given by
$$\dot{H}^1_h(\Gamma_\pm) = \{U \in \dot{H}^1(\Gamma_\pm) \, : \, \Delta U = 0 \textrm{ in } \Gamma_{\pm} \}.$$
It follows from equations \eqref{eq:energyproduct}-\eqref{eq:energyinner} that, evaluating $\Si f$ in either $\Gamma_+$ or $\Gamma_-$ for a charge $f$, $\Si$ extends to bounded maps
$$\Si \colon \E(\Gamma) \to \dot{H}^1_h(\Gamma_\pm).$$
By the trace inequality \cite[Theorem~2.4]{EL12} and the method of \cite{Din96}, there are (unique) continuous traces $\Tr_{\pm} \colon \dot{H}^1(\Gamma_\pm) \to \dot{H}^{1/2}(\Gamma)$. By the corresponding result for bounded Lipschitz surfaces $\Gamma$ \cite{Cost07}, and by considering smooth cut-off functions, we see that $\Tr_{\pm} \Si f = \Si f$ for $f \in L^2_{c}(\Gamma)$. By Corollary~\ref{cor:Sbdaryisom}, both sides of this equation extend continuously to $\dot{H}^{-1/2}(\Gamma) \simeq \E(\Gamma)$, and we conclude that
$$\Tr_{+} \Si f = \Tr_{-} \Si f = \Si f \in \dot{H}^{1/2}(\Gamma), \quad f \in \E(\Gamma).$$
This leads to the following result on the interior Dirichlet problem. Of course, we could equally well make the analogous statement for the exterior Dirichlet problem.
\begin{cor} \label{cor:dirichletE}
The trace $\Tr_{+} \colon \dot{H}^1_h(\Gamma_+) \to \dot{H}^{1/2}(\Gamma)$ is an isomorphism. That is, the Dirichlet problem
$$\begin{cases} \int_{\Gamma_{+}} |\nabla U|^2 \, dV < \infty, \\
\Delta U = 0 \textrm{ in } \Gamma_{+}, \\
\Tr_{+} U = g \in \dot{H}^{1/2}(\Gamma),\end{cases}$$
is well-posed. The unique solution $U$ is given by a single layer potential, $U = \Si f$, where $f \in \E(\Gamma)$. Hence $\Si \colon \E(\Gamma) \to \dot{H}^1_h(\Gamma_{+})$ is an isomorphism.
\end{cor}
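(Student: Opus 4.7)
The plan is to establish that the trace map $\Tr_+ \colon \dot{H}^1_h(\Gamma_+) \to \dot{H}^{1/2}(\Gamma)$ is a bijection. Granted this, the representation $U = \Si f$ will follow by setting $f = \Si^{-1}(\Tr_+ U) \in \E(\Gamma)$, where $\Si \colon \E(\Gamma) \to \dot{H}^{1/2}(\Gamma)$ is the isomorphism furnished by Corollary~\ref{cor:Sbdaryisom} at $s=1/2$ combined with the identification $\E(\Gamma) \simeq \dot{H}^{-1/2}(\Gamma)$ of Theorem~\ref{thm:homsobolev}; the final assertion $\Si \colon \E(\Gamma) \to \dot{H}^1_h(\Gamma_+)$ will then follow from the factorization $\Si|_{\Gamma_+} = \Tr_+^{-1} \circ \Si|_{\Gamma}$.

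Surjectivity of $\Tr_+$ is essentially immediate from the preceding discussion: given $g \in \dot{H}^{1/2}(\Gamma)$, I take $f = \Si^{-1} g \in \E(\Gamma)$ and set $U = \Si f$ on $\Gamma_+$. The mapping property $\Si \colon \E(\Gamma) \to \dot{H}^1_h(\Gamma_+)$ and the identity $\Tr_+ \Si f = \Si f$, both recorded just above the corollary, yield $U \in \dot{H}^1_h(\Gamma_+)$ with $\Tr_+ U = g$.

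Injectivity is the substantive step and the main obstacle. Given $U \in \dot{H}^1_h(\Gamma_+)$ with $\Tr_+ U = 0$, I select the representative of $U$ whose pointwise trace on $\Gamma$ vanishes, and extend by zero to $\tilde{U} \in \dot{H}^1(\R^3)$. The Sobolev embedding $\dot{H}^1(\R^3) \hookrightarrow L^6(\R^3)$ then yields $U \in L^6(\Gamma_+)$ with $\|U\|_{L^6(\Gamma_+)} \lesssim \|\nabla U\|_{L^2(\Gamma_+)}$. Fix $\eta \in C^\infty_c(\R^3)$ with $\eta \equiv 1$ on the unit ball $B_1$ and $\supp \eta \subset B_2$, and set $\eta_R(r) = \eta(r/R)$. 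Then $\eta_R^2 U \in \dot{H}^1(\Gamma_+)$ has compact support and vanishing trace, so Green's identity applied with the harmonic function $U$ gives
\begin{equation*}
\int_{\Gamma_+} \nabla U \cdot \nabla(\eta_R^2 U) \, dV = 0,
\end{equation*}
which, after expanding the gradient and applying Cauchy--Schwarz followed by H\"older with $\tfrac{1}{6}+\tfrac{1}{3}=\tfrac{1}{2}$, yields
\begin{equation*}
\int_{\Gamma_+} \eta_R^2 |\nabla U|^2 \, dV \leq 4 \|U\|_{L^6(\Omega_R)}^2 \, \|\nabla \eta_R\|_{L^3(\R^3)}^2,
\end{equation*}
with $\Omega_R = \Gamma_+ \cap \{R \leq |r| \leq 2R\}$. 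A scaling computation renders $\|\nabla \eta_R\|_{L^3(\R^3)}$ independent of $R$, while $\|U\|_{L^6(\Omega_R)} \to 0$ by dominated convergence. Passing $R \to \infty$ and invoking monotone convergence on the left then forces $\nabla U \equiv 0$, so $U$ is constant, completing the injectivity and hence the proof.
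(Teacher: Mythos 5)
Your proof is correct and, for the existence part and the two isomorphism claims, follows exactly the paper's route: solve $\Si f = g$ on the boundary using Corollary~\ref{cor:Sbdaryisom} at $s=1/2$ together with $\E(\Gamma)\simeq\dot{H}^{-1/2}(\Gamma)$, then read off $U=\Si f$ and the factorization $\Si|_{\Gamma_+}=\Tr_+^{-1}\circ\Si|_{\Gamma}$. The genuine difference is in uniqueness: the paper disposes of it in one line by citing Medkov\'a's book (Theorem~7.1.2 there), whereas you prove it from scratch with a zero-extension, the Sobolev embedding $\dot{H}^1(\R^3)\hookrightarrow L^6(\R^3)$, and a Caccioppoli-type cutoff estimate in which $\|\nabla\eta_R\|_{L^3}$ is scale-invariant and $\|U\|_{L^6(\Omega_R)}\to 0$ kills the right-hand side. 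This is a legitimate and self-contained alternative; what it buys is independence from the external reference, at the cost of two points that deserve a sentence each: first, the passage from $\Tr_+U=0$ to $\tilde{U}\in\dot{H}^1(\R^3)$ with $\nabla\tilde{U}=\chi_{\Gamma_+}\nabla U$ (no singular contribution on $\Gamma$) is the standard ``zero trace implies zero extension'' fact for Lipschitz graphs and should be invoked explicitly, since the traces here are only defined via \cite[Theorem~2.4]{EL12} and \cite{Din96}; second, the embedding $\dot{H}^1(\R^3)\hookrightarrow L^6(\R^3)$ a priori gives $\tilde{U}-c\in L^6$ for some constant $c$, and you need to observe that $c=0$ because $\tilde{U}$ vanishes on the unbounded set $\Gamma_-$. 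Neither point is a gap, only a place where the write-up should be slightly more explicit.
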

\begin{proof}
	Given $g \in \dot{H}^{1/2}(\Gamma)$, there is by Corollary~\ref{cor:Sbdaryisom} an $f \in \E(\Gamma)$ such that $\Si f = g$ in $\dot{H}^{1/2}(\Gamma)$. Then $U = \Si f$ in $\dot{H}_h^1(\Gamma_+)$ solves the Dirichlet problem. Uniqueness is given by \cite[Theorem~7.1.2]{Medk18}. Corollary~\ref{cor:Sbdaryisom} shows that $\Si \colon \E(\Gamma) \to \dot{H}^1_h(\Gamma_{+})$ is injective, while surjectivity is given by well-posedness of the Dirichlet problem.
\end{proof}
\section{The energy space spectrum} \label{sec:Espec}
We now return to the situation where $\Gamma = \Gamma_\alpha$ is the boundary of a wedge of opening angle $\alpha$. Recall from Section~\ref{sec:conv} that
$$\K_\alpha = \begin{pmatrix} 
	0 & K_\alpha \\
	K_\alpha & 0 
\end{pmatrix}, \quad \Si_\alpha = \begin{pmatrix} 
S_0 & S_\alpha \\
S_\alpha & S_0 
\end{pmatrix},$$
where $K_\alpha f = f \star k_\alpha$ and, by equation~\eqref{eq:salphaformula}, $S_\beta f = V_1f \star s_\beta$, with convolution kernels given by
$$k_\alpha(s,t) = -\frac{\sin \alpha}{2\pi}\frac{1}{(1+s^2 - 2s\cos \alpha + t^2)^{3/2}}$$
and
$$s_\beta(s,t) = \frac{1}{4\pi} \frac{1}{(1+s^2 - 2s \cos \beta + t^2)^{1/2}}.$$ 
Here, as before, $V_\gamma$, $\gamma \in \R$, is the operator of multiplication by $\Delta^{-\gamma} = x^\gamma$. 

For technical purposes, we begin by establishing some mapping properties of $\Si_\alpha$ and $\K_\alpha$, refraining from working out the much more general statement that could be given. For $1 \leq p < \infty$ we write $L^{p,a}(dx \, dz) = L^p(x^a \, dx \, dz)$ and $L^{p,a}(\Gamma_\alpha) = L^{p,a}(dx \, dz) \oplus L^{p,a}(dx \, dz)$.
\begin{lem} \label{lem:mapping}
	The following operators are bounded:
	\begin{align*}
	&\Si_\alpha \colon L^{2, 3/4}(\Gamma_\alpha) \to L^{4, -1/2}(\Gamma_\alpha),  &\K_\alpha \colon L^{2, 3/4}(\Gamma_\alpha) \to L^{2, 3/4}(\Gamma_\alpha), \\ &\K_\alpha \colon L^{4/3, 1/8}(\Gamma_\alpha) \to L^{4/3, 1/8}(\Gamma_\alpha), &\K_\alpha^\ast \colon L^{4, -1/2}(\Gamma_\alpha) \to L^{4, -1/2}(\Gamma_\alpha).
	\end{align*}
\end{lem}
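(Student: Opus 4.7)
All four claims reduce to scalar convolution estimates on $G$ through the block decompositions of $\K_\alpha$ and $\Si_\alpha$ from Section~\ref{sec:conv} and the unitaries $V_\gamma \colon L^{p,a}(dx\,dz) \to L^p(G)$ with $\gamma = (a+1)/p$. The workhorse is the commutation rule $V_\gamma(f \star k) = (V_\gamma f) \star (V_\gamma k) = (V_\gamma f) \star (\Delta^{-\gamma} k)$, which holds because $\Delta(x,z) = 1/x$ is a multiplicative character of $G$ (so $\Delta(xy^{-1}) = \Delta(x)\Delta(y)^{-1}$). This translates each weighted mapping claim into a convolution inequality on ordinary $L^p(G)$-spaces, with the weight parameters absorbed into a power-of-$\Delta$ twist of the kernel.

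For the three bounds concerning $\K_\alpha$ and $\K_\alpha^\ast$, I would apply the elementary case $q = 1$ of the non-unimodular Young inequality (Lemma~\ref{lem:young}), $\|F \star K\|_{L^p(G)} \leq \|F\|_{L^p(G)} \|K\|_{L^1(G)}$. After conjugation, the convolution kernel is $\Delta^{-\gamma} k_\alpha$ for $\K_\alpha$ and $\Delta^{-(\gamma+1)} k_\alpha$ for $\K_\alpha^\ast$; the latter uses the identity $K_\alpha^\ast f = f \star (\Delta^{-1} k_\alpha)$, obtained by Fubini together with the substitutions $x = us$, $z = ut+v$ and $(s,t) \mapsto (s,t)^{-1}$ that rewrite $K_\alpha^\ast$ as a right convolution. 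The $L^1(G)$-norm of $\Delta^{-\rho} k_\alpha$ is the same integral evaluated in the proof of Lemma~\ref{lem:l2norm} and is finite whenever $0 < \rho < 2$. The three cases give $\rho = 7/8$, $27/32$, and $9/8$ respectively, all in the admissible range.

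The bound $\Si_\alpha \colon L^{2,3/4} \to L^{4,-1/2}$ is genuinely more delicate. A quick scaling check, $(2+3/4)/2 = 11/8 = 1 + (2 + (-1/2))/4$, identifies the mapping as scale-critical, at the endpoint of a Hardy-Littlewood-Sobolev-type embedding. The direct non-unimodular Young with $(p,q,r) = (2,4/3,4)$ (hence $q' = 4$ and a $\Delta^{-1/4}$ twist) would demand $V_{1/8}\sigma_\beta \in L^{4/3}(G)$. However, $V_{1/8}\sigma_\beta(s,t) = s^{-7/8}/(4\pi)(1+s^2-2s\cos\beta+t^2)^{-1/2}$, and after integration in $t$ the remaining $s$-integrand behaves like $s^{-13/6}$ near the identity, which is non-integrable. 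Instead, I would apply a Stein-Weiss type weighted Hardy-Littlewood-Sobolev inequality directly to the two-dimensional Riesz kernel $((x-u\cos\beta)^2 + (u\sin\beta)^2 + (z-v)^2)^{-1/2}$ underlying $S_\beta$. The substitution $g(x,z) = x^{3/8} f(x,z)$ recasts the desired estimate as $\|x^{-1/8}\, \mathcal{I}_1(x^{-3/8} g)\|_{L^4(\R^2)} \lesssim \|g\|_{L^2(\R^2)}$, where $\mathcal{I}_1$ is the 2D Riesz potential and $x$ denotes distance to the $z$-axis. The exponents $\mu = 1/8$, $\nu = 3/8$ satisfy $\mu + \nu = 1/2$ together with the HLS balance $1/4 = 1/2 - (1 - 1/2)/2$ and the admissibility constraints $\mu < n/q = 1/4$, $\nu < n/p' = 1/2$; a one-sided Stein-Weiss inequality with distance-to-hyperplane weights then supplies the bound. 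For $\beta \neq 0, \pi$ the kernel has no diagonal singularity and the estimate is in fact strictly easier, deducible by a Hölder argument after integration in the $v$-variable.

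The main obstacle is precisely the $\Si_\alpha$ claim. The double-layer kernel $k_\alpha$ is comfortably in $L^1(G)$ after mild reweighting, so the $\K_\alpha$ bounds fall to Young's inequality. By contrast, the single-layer kernel $\sigma_\beta$ has a $1/s$ singularity at the group identity, and no Young-type convolution inequality on $G$ alone can yield the cross-integrability $L^{2,3/4} \to L^{4,-1/2}$; genuinely fractional-integration input is needed, encoding the two-dimensional Sobolev endpoint visible in the scaling identity above.
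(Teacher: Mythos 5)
Your treatment of the three $\K_\alpha$-bounds is correct and essentially the paper's: conjugate by the appropriate $V_\gamma$, observe that the twisted kernel $\Delta^{-\rho}k_\alpha$ lies in $L^1(G)$ precisely for $0<\rho<2$ (the integral computed in the proof of Lemma~\ref{lem:l2norm}), and apply the $q=1$ case of Young's inequality; your exponents $\rho=7/8,\ 27/32,\ 9/8$ are right, and the identity $K_\alpha^\ast f=f\star(\Delta^{-1}k_\alpha)$ is a clean direct substitute for the duality step the paper uses for the fourth bound.

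The gap is in the $\Si_\alpha$-bound, and it originates in a bookkeeping error that leads you to reject the approach that actually works. Write $S_\beta f=(V_1f)\star s_\beta$ with $s_\beta(s,t)=\frac{1}{4\pi}(1+s^2-2s\cos\beta+t^2)^{-1/2}$, equivalently $S_\beta f=V_1(f\star\sigma_\beta)$ with your $\sigma_\beta=V_{-1}s_\beta$. Conjugating to realize the map $L^{2,3/4}\to L^{4,-1/2}$ gives $V_{1/8}S_\beta f=(V_{9/8}f)\star(V_{9/8}\sigma_\beta)=\bigl(\Delta^{-1/4}V_{7/8}f\bigr)\star\bigl(V_{1/8}s_\beta\bigr)$: the kernel entering Young's inequality carries the \emph{full} twist $V_{9/8}$ (the target exponent $1/8$ plus the $V_1$ built into $S_\beta$), so it is $V_{1/8}s_\beta=s^{1/8}s_\beta$, not $V_{1/8}\sigma_\beta=s^{-7/8}s_\beta$ as you computed. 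Since $0\le s_\alpha\le s_0$, one has $\|V_{1/8}s_\beta\|_{L^{4/3}(G)}^{4/3}\lesssim\int_0^\infty s^{1/6-1}|1-s|^{-1/3}\,ds<\infty$; the $s^{-13/6}$ divergence you found at $s\to0$ is entirely an artifact of the misplaced factor of $\Delta$. Thus the non-unimodular Young inequality with $(p,q,r)=(2,4/3,4)$ does close the scale-critical estimate --- this is exactly the paper's proof. Your replacement, a ``one-sided Stein--Weiss inequality with distance-to-hyperplane weights,'' is therefore unnecessary, and as written it is not a proof: that inequality is nonstandard (classical Stein--Weiss weights are powers of the distance to a point, not to a line), you neither prove nor cite it, and your admissibility conditions are stated with unexplained conventions. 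As it stands, the $\Si_\alpha$-claim is not established by your argument.
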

\begin{proof}
Note first that $V_a s_\beta \in L^q(G)$ for $\beta \in \{0,\alpha\}$ if and only if $1 < q < 2$ and $0 < aq < q-1$. To see this, note that $0 \leq s_\alpha \leq s_0$ and that if $q > 1$, then
	$$\|V_as_0\|_{L^q}^q = \frac{1}{4\pi} \int_0^\infty \int_{-\infty}^\infty \frac{s^{aq}}{((1-s)^q + t^2)^{q/2}} \, dt \, \frac{ds}{s} = c_q \int_{0}^\infty \frac{s^{aq-1}}{|1-s|^{q-1}} \, ds,$$
	where $c_q > 0$ is a constant. 
	We now let $q = 4/3$. By Young's inequality, Lemma~\ref{lem:young},
	\begin{align*}
	\|S_\beta f\|_{L^{4,4a-1}(dx \, dz)} &= \|V_{1+a}f \star V_a s_\beta \|_{L^4(G)} \leq \|V_{3/4+a} f\|_{L^2(G)} \|V_a s_\beta\|_{L^{4/3}(G)} \\ &\lesssim \|f\|_{L^{2, 1/2 + 2a}(dx \, dz)},
	\end{align*}
	whenever $\beta \in \{0,\alpha\}$ and $0 < a < 1/4$. This yields that $\Si_\alpha \colon L^{2, 3/4}(\Gamma_\alpha) \to L^{4, -1/2}(\Gamma_\alpha)$ is bounded upon choosing $a = 1/8$.
	
	That $\K_\alpha \colon L^{2, 3/4}(\Gamma_\alpha) \to L^{2, 3/4}(\Gamma_\alpha)$ is bounded is part of Lemma~\ref{lem:l2norm}. The boundedness of $\K_\alpha \colon L^{4/3, 1/8}(\Gamma_\alpha) \to L^{4/3, 1/8}(\Gamma_\alpha)$ also follows from (the proof of) that lemma, since it shows that $V_{27/32}k_\alpha \in L^1(G)$, and by Young's inequality
	\begin{align*}
	\|K_\alpha f\|_{L^{4/3, 1/8}(dx \, dz)} &= \|V_{27/32}f \star V_{27/32}k_\alpha \|_{L^{4/3}(G)} \\ &\leq \|V_{27/32}f\|_{L^{4/3}(G)}\|V_{27/32}k_\alpha\|_{L^1(G)} \lesssim \|f\|_{L^{4/3,1/8}(dx \, dz)}.
	\end{align*}
	 This last estimate also proves that $\K_\alpha^\ast \colon L^{4, -1/2}(\Gamma_\alpha) \to L^{4, -1/2}(\Gamma_\alpha)$ is bounded, since $L^{4, -1/2}(\Gamma_\alpha)$ is the dual space of $L^{4/3, 1/8}(\Gamma_\alpha)$ under the $L^2(\Gamma)$-pairing.
\end{proof}
The lemma allows us to motivate the Plemelj formula $\Si \K = \K^\ast \Si$ for the unbounded Lipschitz graph $\Gamma_\alpha$.
\begin{lem} \label{lem:plemelj}
The Plemelj formula is valid for $\Gamma_\alpha$ when either side of the equation is interpreted as a bounded operator from $L^{2, 3/4}(\Gamma_\alpha)$ into $L^{4,-1/2}(\Gamma_\alpha)$. That is,
$$\Si_\alpha \K_\alpha = \K_\alpha^\ast \Si_\alpha \colon L^{2, 3/4}(\Gamma_\alpha) \to L^{4,-1/2}(\Gamma_\alpha).$$
\end{lem}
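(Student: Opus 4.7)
The plan is to establish the identity $\Si_\alpha \K_\alpha f = \K_\alpha^\ast \Si_\alpha f$ first on the dense subspace $L^2_{c,0}(\Gamma_\alpha)$ via Green's second identity applied on the unbounded domain $\Gamma_{\alpha,+}$, and then to extend it to all of $L^{2,3/4}(\Gamma_\alpha)$ using the boundedness statements of Lemma~\ref{lem:mapping}. As a preliminary step, $L^2_{c,0}(\Gamma_\alpha)$ is densely contained in $L^{2,3/4}(\Gamma_\alpha)$: compactly supported $L^2$-functions belong trivially to $L^{2,3/4}$ since $x^{3/4}$ is bounded on their support, and density is obtained by first truncating an arbitrary $h \in L^{2,3/4}(\Gamma_\alpha)$ to an $L^2_c$-approximation (using that $x^{3/4}$ is bounded away from zero on the truncated region) and then subtracting a mean-compensating function whose support lies far from the edge and whose $L^{2,3/4}$-norm can be made arbitrarily small, exactly as in the proof of Lemma~\ref{lem:Sdensedef}.

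Now fix real-valued $f, g \in L^2_{c,0}(\Gamma_\alpha)$ and set $U = \Si_\alpha f$, $V = \Si_\alpha g$; both are harmonic in $\Gamma_{\alpha,\pm}$, and Lemma~\ref{lem:Sdensedef} supplies the decay $U(r), V(r) = O(|r|^{-2})$ together with $\nabla U(r), \nabla V(r) = O(|r|^{-3})$ at infinity (the latter from differentiating under the integral sign in the definition of $\Si_\alpha$). Applying Green's second identity to $U, V$ on the bounded Lipschitz domain $\Gamma_{\alpha,+} \cap B_R$ yields
$$\int_{\Gamma_\alpha \cap B_R} (U \partial_n^+ V - V \partial_n^+ U) \, d\sigma = -\int_{\Gamma_{\alpha,+} \cap \partial B_R} (U \partial_n V - V \partial_n U)\, d\sigma,$$
where the right-hand side is $O(R^{-3})$ and the left-hand side converges absolutely as $R \to \infty$, since $U, V \in L^2(\Gamma_\alpha)$ by Lemma~\ref{lem:Sdensedef} and $\partial_n^\pm U, \partial_n^\pm V \in L^2(\Gamma_\alpha)$ by the jump formulas combined with Lemma~\ref{lem:l2norm}. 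Substituting the jump formulas $\partial_n^+ U = \tfrac{1}{2}(f - \K_\alpha f)$, $\partial_n^+ V = \tfrac{1}{2}(g - \K_\alpha g)$ and using the bilinear self-adjointness of $\Si_\alpha$ (immediate from Fubini and the symmetry of $1/|r-r'|$) to cancel the $\Si_\alpha f \cdot g$ and $\Si_\alpha g \cdot f$ contributions, the limiting identity reduces to
$$\int_{\Gamma_\alpha} \Si_\alpha g \cdot \K_\alpha f \, d\sigma = \int_{\Gamma_\alpha} \Si_\alpha f \cdot \K_\alpha g \, d\sigma.$$
Rewriting the left side with the self-adjointness of $\Si_\alpha$ and the right side with the definition of $\K_\alpha^\ast$ gives $\int_{\Gamma_\alpha} g \cdot (\Si_\alpha \K_\alpha f - \K_\alpha^\ast \Si_\alpha f) \, d\sigma = 0$ for every $g \in L^2_{c,0}(\Gamma_\alpha)$. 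Since this test space contains all smooth compactly supported functions vanishing near the edge, the difference must vanish almost everywhere on $\Gamma_\alpha$. The complex-valued case follows by $\mathbb{C}$-linearity, and the identity then extends from $L^2_{c,0}(\Gamma_\alpha)$ to $L^{2,3/4}(\Gamma_\alpha)$ by Lemma~\ref{lem:mapping} and the density above.

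The main obstacle is the passage $R \to \infty$ in Green's identity on the unbounded domain $\Gamma_{\alpha,+}$: both the vanishing of the spherical contribution on $\partial B_R$ and the absolute convergence of the boundary integral along $\Gamma_\alpha$ hinge on the decay $\Si_\alpha f = O(|r|^{-2})$, which is available only because the zero-mean condition built into $L^2_{c,0}(\Gamma_\alpha)$ cancels the leading monopole term of the Newtonian kernel. Everything after Green's identity is a formal cancellation rooted in the kernel symmetry of $\Si_\alpha$ and the definition of $\K_\alpha^\ast$.
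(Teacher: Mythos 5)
Your argument reaches the same key intermediate identity as the paper, namely the weak form
$$\langle \K_\alpha f, \Si_\alpha g \rangle_{L^2(\Gamma_\alpha)} = \langle \Si_\alpha f, \K_\alpha g \rangle_{L^2(\Gamma_\alpha)},$$
and then extends by the boundedness in Lemma~\ref{lem:mapping} exactly as the paper does, but you get there by a genuinely different route. The paper exhausts $\Gamma_\alpha$ by \emph{closed bounded} Lipschitz surfaces $\Gamma^j$, invokes Verchota's Plemelj formula on each $\Gamma^j$, and controls the error coming from $\Gamma^j \setminus \Gamma_\alpha$ by choosing the caps far away; this works for all of $L^2_c(\Gamma_\alpha)$ and needs no decay of $\Si_\alpha f$ beyond the trivial $O(|r|^{-1})$. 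You instead apply Green's second identity on $\Gamma_{\alpha,+}\cap B_R$ and kill the spherical contribution using the $O(|r|^{-2})$ decay from Lemma~\ref{lem:Sdensedef}, which forces you to restrict to zero-mean charges $L^2_{c,0}(\Gamma_\alpha)$. Your route avoids having to construct the auxiliary surfaces $\Gamma^j$ and re-derives the bounded-domain Plemelj formula from scratch, at the cost of (i) needing the non-tangential-maximal-function justification of Green's identity on the Lipschitz domain $\Gamma_{\alpha,+}\cap B_R$ (the same machinery from \cite{Ver84} that the paper's citation encapsulates, so you should cite it rather than treat Green's identity as classical), and (ii) the density bookkeeping for $L^2_{c,0}$, which your sketch handles correctly along the lines of Lemma~\ref{lem:Sdensedef}. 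A pleasant feature of your version is that the interior domain alone suffices: the jump formula for $\partial_n^+$ plus the symmetry of $\Si_\alpha$ already cancels the non-$\K_\alpha$ terms.

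There is one concrete gap in the last step. You assert that $L^2_{c,0}(\Gamma_\alpha)$ ``contains all smooth compactly supported functions vanishing near the edge''; it does not --- it contains only those with vanishing mean, by definition. Consequently, from $\int_{\Gamma_\alpha} g\,(\Si_\alpha\K_\alpha f - \K_\alpha^\ast\Si_\alpha f)\,d\sigma = 0$ for all $g \in L^2_{c,0}(\Gamma_\alpha)$ you may only conclude that the difference $h = \Si_\alpha\K_\alpha f - \K_\alpha^\ast\Si_\alpha f$ is constant on $\Gamma_\alpha$, not that it vanishes. The conclusion is still salvageable: by Lemma~\ref{lem:mapping} you know $h \in L^{4,-1/2}(\Gamma_\alpha)$, and a nonzero constant is not in $L^{4,-1/2}(\Gamma_\alpha)$ since $\int_0^\infty\int_{-\infty}^\infty x^{-1/2}\,dx\,dz = \infty$, so $h = 0$. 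You need to say this explicitly; as written the deduction ``the difference must vanish almost everywhere'' does not follow from the stated test class.
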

\begin{proof}
Choose a sequence $(\Gamma^j)_{j=1}^\infty$ of bounded connected Lipschitz surfaces such that $(\Gamma^j \cap \Gamma_\alpha)_{j=1}^\infty$ is an increasing exhaustion of $\Gamma_\alpha$. The choice of sequence can be made so that for any compact set $K \subset \Gamma_\alpha$ it holds for sufficiently large $j$ that
$$\dist(K, \Gamma^j \setminus \Gamma_\alpha) \simeq j, \quad \int_{\Gamma^j} \, d\sigma_j \simeq j^2,$$
where $d\sigma_j$ denotes the surface measure of $\Gamma^j$.
Suppose that $f,g \in L^2_c(\Gamma_\alpha)$. For sufficiently large $j$ we can understand $f$ and $g$ as functions on $\Gamma^j$, and then, by the Plemelj formula for bounded domains \cite[Theorem~3.3]{Ver84}, 
\begin{equation} \label{eq:plemeljpf1}
\langle \K_{\Gamma^j} f, \Si_{\Gamma^j} g \rangle_{L^2(\Gamma^j)} = \langle \Si_{\Gamma^j} f, \K_{\Gamma^j} g \rangle_{L^2(\Gamma^j)},
\end{equation}
where $\K_{\Gamma^j}$ and $\Si_{\Gamma^j}$ denote the layer potentials of $\Gamma^j$.
Note that 
\begin{equation} \label{eq:plemeljpf2}
\langle \K_{\Gamma^j} f, \Si_{\Gamma^j} g \rangle_{L^2(\Gamma^j)} = \langle \K_{\alpha} f, \Si_{\alpha} g \rangle_{L^2(\Gamma^j \cap \Gamma_\alpha)} + \langle \K_{\Gamma^j} f, \Si_{\Gamma^j} g \rangle_{L^2(\Gamma^j \setminus  \Gamma_\alpha)},
\end{equation}
where
$$|\langle \K_{\Gamma^j} f, \Si_{\Gamma^j} g \rangle_{L^2(\Gamma^j \setminus  \Gamma_\alpha)}| \lesssim \frac{1}{j^3}\int_{\Gamma^j} \, d\sigma_j \to 0, \quad j \to \infty.$$
Since, by Lemma~\ref{lem:mapping}, $\K_{\alpha} f \in L^{4/3, 1/8}(\Gamma_\alpha)$ and $\Si_{\alpha} g \in L^{4, -1/2}(\Gamma_\alpha)$, we deduce from \eqref{eq:plemeljpf2} that
$$\langle \K_{\Gamma^j} f, \Si_{\Gamma^j} g \rangle_{L^2(\Gamma^j)} \to \langle \K_{\alpha} f, \Si_{\alpha} g \rangle_{L^2(\Gamma_\alpha)}, \quad j \to \infty.$$ Similarly, $\langle \Si_{\Gamma^j} f, \K_{\Gamma^j} g \rangle_{L^2(\Gamma^j)} \to \langle \Si_{\alpha} f, \K_{\alpha} g \rangle_{L^2(\Gamma_\alpha)}$. By \eqref{eq:plemeljpf1} we conclude that
\begin{equation} \label{eq:plemeljpf3}
\langle \K_{\alpha} f, \Si_{\alpha} g \rangle_{L^2(\Gamma_\alpha)} = \langle \Si_{\alpha} f, \K_{\alpha} g \rangle_{L^2(\Gamma_\alpha)}, \quad f,g \in L^2_c(\Gamma).
\end{equation}
By Lemma~\ref{lem:mapping}, the operators $\Si_{\alpha} \K_{\alpha}, \K_{\alpha}^\ast \Si_{\alpha} \colon L^{2, 3/4}(\Gamma_\alpha) \to L^{4,-1/2}(\Gamma_\alpha)$ are bounded. Hence we infer from \eqref{eq:plemeljpf3} that they are equal, $\Si_{\alpha} \K_{\alpha} = \K_{\alpha}^\ast \Si_{\alpha}.$
\end{proof}

Lemmas~\ref{lem:mapping} and \ref{lem:plemelj} let us define $\K_\alpha$ as an unbounded symmetric operator on $\E(\Gamma_\alpha)$. We will later see that $\K_{\alpha} \colon \E(\Gamma_\alpha) \to \E(\Gamma_\alpha)$ is bounded (and hence self-adjoint).
\begin{lem} \label{lem:KEdef}
Let 
$$\mathcal{D}(\Gamma_\alpha) = L^{2, 3/4}(\Gamma_\alpha) \cap L^{4/3, 1/8}(\Gamma_\alpha).$$ 
Then $L^2_{c}(\Gamma_\alpha) \subset \mathcal{D}(\Gamma_\alpha) \subset \E(\Gamma_\alpha)$, the second inclusion understood in the natural way such that
\begin{equation} \label{eq:eformulaholds}
\langle f, g \rangle_{\E(\Gamma_\alpha)} = \langle \Si_{\alpha} f, g \rangle_{L^2(\Gamma_\alpha)}, \quad f,g \in \mathcal{D}(\Gamma_\alpha).
\end{equation}
 Furthermore, $\K_{\alpha} \mathcal{D}(\Gamma_\alpha) \subset \mathcal{D}(\Gamma_\alpha)$. Thus $\K_{\alpha} \colon \E(\Gamma_\alpha) \to \E(\Gamma_\alpha)$ is densely defined with domain $\mathcal{D}(\Gamma_\alpha)$, and this operator is symmetric.
\end{lem}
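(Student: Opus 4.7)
The plan is to bootstrap from the mapping properties already recorded in Lemmas~\ref{lem:mapping} and \ref{lem:plemelj} with a cut-off approximation argument bridging $\mathcal{D}(\Gamma_\alpha)$ to $\E(\Gamma_\alpha)$.

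The inclusion $L^2_c(\Gamma_\alpha) \subset \mathcal{D}(\Gamma_\alpha)$ is immediate: on any compact $K \subset \Gamma_\alpha$ the weights $x^{3/4}$ and $x^{1/8}$ are bounded, so $L^2(K) \subset L^{2,3/4}(K)$, and Hölder's inequality yields $L^2(K) \subset L^{4/3}(K) \subset L^{4/3,1/8}(K)$. The crux is $\mathcal{D}(\Gamma_\alpha) \subset \E(\Gamma_\alpha)$ together with formula~\eqref{eq:eformulaholds}. Given $f \in \mathcal{D}(\Gamma_\alpha)$, I would introduce cut-offs $f_n = \chi_n f \in L^2_c(\Gamma_\alpha)$, where $\chi_n$ is smooth, supported in $\{1/n < x < n,\ |z| < n\}$, and converges pointwise to $1$. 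By dominated convergence, $f_n \to f$ in both $L^{2,3/4}(\Gamma_\alpha)$ and $L^{4/3,1/8}(\Gamma_\alpha)$. Using Lemma~\ref{lem:mapping} and the $L^{4,-1/2}$–$L^{4/3,1/8}$ duality, the estimate
\begin{equation*}
\|h\|^2_{\E(\Gamma_\alpha)} = \langle \Si_\alpha h, h\rangle_{L^2(\Gamma_\alpha)} \leq \|\Si_\alpha h\|_{L^{4,-1/2}(\Gamma_\alpha)} \|h\|_{L^{4/3,1/8}(\Gamma_\alpha)} \lesssim \|h\|_{L^{2,3/4}(\Gamma_\alpha)} \|h\|_{L^{4/3,1/8}(\Gamma_\alpha)}
\end{equation*}
applied to $h = f_n - f_m$ shows that $(f_n)$ is Cauchy in $\E(\Gamma_\alpha)$, while the same estimate makes the limit independent of the choice of approximating sequence. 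Formula~\eqref{eq:eformulaholds} then follows by passing to the limit in $\langle f_n, g_n\rangle_{\E(\Gamma_\alpha)} = \langle \Si_\alpha f_n, g_n\rangle_{L^2(\Gamma_\alpha)}$, using continuity of $\Si_\alpha \colon L^{2,3/4}(\Gamma_\alpha) \to L^{4,-1/2}(\Gamma_\alpha)$ and of the dual pairing.

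The invariance $\K_\alpha\mathcal{D}(\Gamma_\alpha) \subset \mathcal{D}(\Gamma_\alpha)$ is direct from Lemma~\ref{lem:mapping}, which gives boundedness of $\K_\alpha$ on each of $L^{2,3/4}(\Gamma_\alpha)$ and $L^{4/3,1/8}(\Gamma_\alpha)$; density of $\mathcal{D}(\Gamma_\alpha)$ in $\E(\Gamma_\alpha)$ is inherited from the density of the smaller space $L^2_c(\Gamma_\alpha)$ built into the definition of $\E(\Gamma_\alpha)$. Symmetry is then a direct chain using \eqref{eq:eformulaholds} and the Plemelj identity of Lemma~\ref{lem:plemelj}: for $f,g \in \mathcal{D}(\Gamma_\alpha)$,
\begin{equation*}
\langle \K_\alpha f, g\rangle_{\E(\Gamma_\alpha)} = \langle \Si_\alpha \K_\alpha f, g\rangle_{L^2} = \langle \K_\alpha^\ast \Si_\alpha f, g\rangle_{L^2} = \langle \Si_\alpha f, \K_\alpha g\rangle_{L^2} = \langle f, \K_\alpha g\rangle_{\E(\Gamma_\alpha)}.
\end{equation*}

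The only real delicacy is choosing the cut-off $\chi_n$ so that the convergences $f_n \to f$ happen simultaneously in $L^{2,3/4}(\Gamma_\alpha)$ and $L^{4/3,1/8}(\Gamma_\alpha)$, taming both the behavior near the edge $\{x=0\}$ and the infinite extent of $\Gamma_\alpha$; this amounts to a routine dominated-convergence verification once the cut-off respects both scales.
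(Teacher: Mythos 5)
Your argument follows the paper's own proof almost step for step: the same Hölder inclusion, the same cut-off approximation, the same Cauchy estimate via the $L^{4,-1/2}$--$L^{4/3,1/8}$ duality and Lemma~\ref{lem:mapping}, and the same use of Lemma~\ref{lem:plemelj} for symmetry. However, there is one point you gloss over that the paper treats as a genuine step: \emph{injectivity} of the map $\mathcal{D}(\Gamma_\alpha) \to \E(\Gamma_\alpha)$. Your estimate shows that the limit of $(f_n)$ in $\E(\Gamma_\alpha)$ is independent of the approximating sequence, i.e.\ that $f \mapsto \lim_n f_n$ is a well-defined continuous linear map from $\mathcal{D}(\Gamma_\alpha)$ into the abstract completion $\E(\Gamma_\alpha)$. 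It does not show that this map is injective, which is what is needed to call it an inclusion and to interpret \eqref{eq:eformulaholds} as a statement about elements of $\E(\Gamma_\alpha)$ rather than about their (possibly collapsing) images. A priori a nonzero $f \in \mathcal{D}(\Gamma_\alpha)$ could have $\|f_n\|_{\E(\Gamma_\alpha)} \to 0$; ruling this out requires either a positive-definiteness/unique-continuation argument for $\Si_\alpha$ on $\mathcal{D}(\Gamma_\alpha)$, or, as the paper does, an appeal to Theorem~\ref{thm:homsobolev}: since $\E(\Gamma_\alpha) \simeq \dot{H}^{-1/2}(\Gamma_\alpha)$ and $\langle f_j, g \rangle_{L^2(\Gamma_\alpha)} \to \langle f, g \rangle_{L^2(\Gamma_\alpha)}$ for $g \in \Lambda^{-1}C_c^\infty(\R^2)$, the limit of $(f_j)$ is identified with the functional induced by $f$ itself, and injectivity is immediate. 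You should add this identification; the rest of your proof is sound and matches the paper's.
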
 
\begin{proof}
	By H\"older's inequality, $L^2_{c}(\Gamma_\alpha) \subset \mathcal{D}(\Gamma_\alpha)$. Let $(\Gamma^j)_{j=1}^\infty$ be an increasing exhausting sequence of compact subsets of 
	$$\Gamma_\alpha \setminus \{(0,0,z)\, : \, z\in \R\}.$$
	 Given $f \in \mathcal{D}(\Gamma_\alpha)$ and $j \geq 1$, let $f_j = \chi_{\Gamma^j} f \in L^2_c(\Gamma_\alpha)$. Then $f_j \to f$ in $\mathcal{D}(\Gamma_\alpha)$. Hence $(f_j)_{j=1}^\infty$ is a Cauchy sequence in $\mathcal{E}(\Gamma_\alpha)$, since, by Lemma~\ref{lem:mapping} and the duality between $L^{4, -1/2}(\Gamma_\alpha)$ and $L^{4/3, 1/8}(\Gamma_\alpha)$,
	\begin{equation} \label{eq:KEdef}
	\|f_j - f_k\|_{\mathcal{E}(\Gamma_\alpha)}^2 \leq \|\Si_{\alpha}(f_j - f_k)\|_{L^{4, -1/2}(\Gamma_\alpha)}\|f_j - f_k\|_{L^{4/3, 1/8}(\Gamma_\alpha)} \to 0, \quad j,k \to \infty.
	\end{equation}
	Therefore $(f_j)_{j=1}^\infty$ represents an element of $\E(\Gamma_\alpha)$ such that
	$$\lim_{j \to \infty} \langle f_j , g \rangle_{L^2(\Gamma_\alpha)} = \langle f, g \rangle_{L^2(\Gamma_\alpha)}, \quad g \in \Lambda^{-1}C_c^\infty(\R^2).$$
	Since $\E(\Gamma_\alpha) \simeq \dot{H}^{-1/2}(\Gamma_\alpha)$ by Theorem~\ref{thm:homsobolev}, this shows that $(f_j)_{j=1}^\infty$ corresponds to the element $f \in \dot{H}^{-1/2}(\Gamma_\alpha)$. In particular, the map 
	$$\mathcal{D}(\Gamma_\alpha) \ni f \mapsto (f_j)_{j=1}^\infty \in \E(\Gamma_\alpha)$$ is injective. Hence it is justified to consider $\mathcal{D}(\Gamma_\alpha)$ a linear subspace of $\E(\Gamma_\alpha)$. Equation \eqref{eq:eformulaholds} follows from \eqref{eq:KEdef} and Lemma~\ref{lem:mapping}.
	
	That $\K_{\alpha} \mathcal{D}(\Gamma_\alpha) \subset \mathcal{D}(\Gamma_\alpha)$ is a consequence of Lemma~\ref{lem:mapping}, and the symmetry of $\K_{\alpha} \colon \E(\Gamma_\alpha) \to \E(\Gamma_\alpha)$ is Lemma~\ref{lem:plemelj}.
\end{proof}
Our next goal is to prove that $\K_{\alpha} \colon \E(\Gamma_\alpha) \to \E(\Gamma_\alpha)$ is actually bounded and to give the correct estimate for its norm. For $\gamma \in \R$, let
$$\mathbb{V}_\gamma  = \begin{pmatrix} 
V_\gamma & 0 \\
0 & V_\gamma 
\end{pmatrix},$$
and consider for $0 < a \leq 1$ the space $\mathcal{E}_a(\Gamma_\alpha) = \mathbb{V}_a \E(\Gamma_\alpha)$, the completion of $\mathbb{V}_a \mathcal{D}(\Gamma_\alpha)$ under the scalar product
\begin{equation} \label{eq:Eascalar}
\langle f, g \rangle_{\E_a(\Gamma_\alpha)} := \langle \mathbb{V}_{-a}  \Si_{\alpha} \mathbb{V}_{-a} f, g \rangle_{L^{2}(\Gamma_\alpha)} = \langle \mathbb{V}_{1-a} \Si_{\alpha} \mathbb{V}_{-a} f, g \rangle_{L^{2,-1}(\Gamma_\alpha)}.
\end{equation}
The following is obvious by definition and Lemma~\ref{lem:KEdef}.
\begin{lem} \label{lem:Eaunitary}
For $0 < a \leq 1$, $\K_{\alpha} \colon \E(\Gamma_\alpha) \to \E(\Gamma_\alpha)$ is unitarily equivalent to $\mathbb{V}_a \K_{\alpha} \mathbb{V}_{-a} \colon \E_a(\Gamma_\alpha) \to \E_a(\Gamma_\alpha)$, the latter operator having domain $\mathbb{V}_a \mathcal{D}(\Gamma_\alpha)$.
\end{lem}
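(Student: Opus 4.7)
The plan is to show that the map $\mathbb{V}_a \colon \E(\Gamma_\alpha) \to \E_a(\Gamma_\alpha)$ is a well-defined surjective isometry, and that it intertwines $\K_\alpha$ on $\E(\Gamma_\alpha)$ with $\mathbb{V}_a \K_\alpha \mathbb{V}_{-a}$ on $\E_a(\Gamma_\alpha)$, with matching domains. The entire argument is really a bookkeeping exercise with the definitions, so there is no genuine obstacle — the only thing to be careful about is to work first on the dense subspace $\mathcal{D}(\Gamma_\alpha)$ where Lemma~\ref{lem:KEdef} allows us to evaluate the energy inner product via equation~\eqref{eq:eformulaholds}.

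First I would verify the isometry on the dense subspace. If $f,g \in \mathcal{D}(\Gamma_\alpha)$, then $\mathbb{V}_a f, \mathbb{V}_a g \in \mathbb{V}_a \mathcal{D}(\Gamma_\alpha)$. Using that multiplication by $x^\gamma$ is formally self-adjoint with respect to the $L^2(\Gamma_\alpha)$-pairing, the definition \eqref{eq:Eascalar} of the $\E_a$-scalar product gives
\begin{equation*}
\langle \mathbb{V}_a f, \mathbb{V}_a g \rangle_{\E_a(\Gamma_\alpha)} = \langle \mathbb{V}_{-a} \Si_\alpha \mathbb{V}_{-a}\mathbb{V}_a f, \mathbb{V}_a g \rangle_{L^2(\Gamma_\alpha)} = \langle \Si_\alpha f, g \rangle_{L^2(\Gamma_\alpha)} = \langle f, g \rangle_{\E(\Gamma_\alpha)},
\end{equation*}
where in the middle equality I used $\mathbb{V}_{-a}\mathbb{V}_a = \mathbb{I}$ on $\mathcal{D}(\Gamma_\alpha)$ and then moved one factor of $\mathbb{V}_{-a}$ across the $L^2$-pairing. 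The last equality is equation~\eqref{eq:eformulaholds} of Lemma~\ref{lem:KEdef}. By density, $\mathbb{V}_a$ extends uniquely to an isometry $\E(\Gamma_\alpha) \to \E_a(\Gamma_\alpha)$; surjectivity is built into the definition $\E_a(\Gamma_\alpha) = \mathbb{V}_a \E(\Gamma_\alpha)$, so this extension is a unitary map.

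Next I would identify the domain and check the intertwining. By Lemma~\ref{lem:KEdef}, $\K_\alpha$ maps $\mathcal{D}(\Gamma_\alpha)$ into itself, and hence $\mathbb{V}_a \K_\alpha \mathbb{V}_{-a}$ carries $\mathbb{V}_a \mathcal{D}(\Gamma_\alpha)$ into itself. Tautologically, for every $f \in \mathcal{D}(\Gamma_\alpha)$ we have
\begin{equation*}
(\mathbb{V}_a \K_\alpha \mathbb{V}_{-a}) (\mathbb{V}_a f) = \mathbb{V}_a (\K_\alpha f),
\end{equation*}
so the unitary $\mathbb{V}_a$ carries the domain $\mathcal{D}(\Gamma_\alpha)$ of $\K_\alpha \colon \E(\Gamma_\alpha) \to \E(\Gamma_\alpha)$ onto the prescribed domain $\mathbb{V}_a \mathcal{D}(\Gamma_\alpha)$ of $\mathbb{V}_a \K_\alpha \mathbb{V}_{-a}$, and intertwines the two operators there. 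This is exactly the unitary equivalence asserted by the lemma.

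If any step merits slight attention it is the first: one should check that the extension of $\mathbb{V}_a$ to $\E(\Gamma_\alpha)$ is consistent with the identification of $\E_a(\Gamma_\alpha)$ as a \emph{set} of elements of the form $\mathbb{V}_a h$ with $h \in \E(\Gamma_\alpha)$. Since we have defined $\E_a(\Gamma_\alpha)$ as the completion of $\mathbb{V}_a \mathcal{D}(\Gamma_\alpha)$ under the inner product \eqref{eq:Eascalar}, and since on that dense subspace $\mathbb{V}_a$ is already an isometric bijection onto $\mathbb{V}_a \mathcal{D}(\Gamma_\alpha)$, the consistency is automatic.
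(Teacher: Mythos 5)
Your proof is correct and fills in exactly the bookkeeping that the paper dismisses with the single remark that the lemma is ``obvious by definition and Lemma~\ref{lem:KEdef}''. The isometry computation via \eqref{eq:Eascalar} and \eqref{eq:eformulaholds} on the dense subspace $\mathcal{D}(\Gamma_\alpha)$, the extension by density, and the tautological intertwining are precisely the intended route.
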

We now run a symmetrization argument (cf. \cite[Theorem~2.2]{HSS05}) to prove that 
$$\|\K_\alpha\|_{B(\E(\Gamma_\alpha))} \leq \|\mathbb{V}_a \K_{\alpha} \mathbb{V}_{-a}\|_{B(L^{2,-1}(\Gamma_\alpha))} = \|\K_{\alpha}\|_{B(L^{2,2a-1}(\Gamma_\alpha))}, \quad 0 < a < 1.$$
The norm on right-hand side was computed in Theorem~\ref{thm:main1}, and taking $a \to 1$ yields the following result.
\begin{thm} \label{thm:Enormbound}
$\K_{\alpha} \colon \E(\Gamma_\alpha) \to \E(\Gamma_\alpha)$ is bounded with norm
\begin{equation} \label{eq:Enormbound}
\|\K_\alpha\|_{B(\E(\Gamma_\alpha))} \leq |1 - \alpha/\pi|.
\end{equation}
\end{thm}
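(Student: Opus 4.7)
The plan is to implement the symmetrization suggested by Lemma~\ref{lem:Eaunitary}: for each fixed $0 < a < 1$ I will show
\[
\|\K_\alpha\|_{B(\E(\Gamma_\alpha))} \;=\; \|T\|_{B(\E_a(\Gamma_\alpha))} \;\leq\; \|T\|_{B(L^{2,-1}(\Gamma_\alpha))} \;=\; \|\K_\alpha\|_{B(L^{2,2a-1}(\Gamma_\alpha))},
\]
with $T := \mathbb{V}_a \K_\alpha \mathbb{V}_{-a}$. The outer equalities are just Lemma~\ref{lem:Eaunitary} and the unitarity of $\mathbb{V}_a \colon L^{2,2a-1}\to L^{2,-1}$. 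The right-hand side is given by Theorem~\ref{thm:main1} as $|\sin((1-a)(\pi-\alpha))/\sin((1-a)\pi)|$, which converges to $|1-\alpha/\pi|$ as $a\nearrow 1$, yielding \eqref{eq:Enormbound}.

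For the core inequality, set $Q = \mathbb{V}_{1-a}\Si_\alpha \mathbb{V}_{-a}$, so that $\langle f,g\rangle_{\E_a} = \langle Qf, g\rangle_{L^{2,-1}}$. A direct computation using $\mathbb{V}_\beta \mathbb{V}_\gamma = \mathbb{V}_{\beta+\gamma}$ and the Plemelj identity $\Si_\alpha \K_\alpha = \K_\alpha^\ast \Si_\alpha$ (Lemma~\ref{lem:plemelj}) produces the intertwining
\[
QT = T^\sharp Q, \qquad T^\sharp := \mathbb{V}_{1-a}\K_\alpha^\ast \mathbb{V}_{a-1},
\]
where $T^\sharp$ is the adjoint of $T$ with respect to the $L^{2,-1}$-pairing, with $\|T^\sharp\|_{B(L^{2,-1})} = \|T\|_{B(L^{2,-1})}$. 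Iterating yields $QT^n = T^{\sharp n}Q$ for every $n\geq 1$. On the other hand, the symmetry of $T$ on $\E_a$ guaranteed by Lemma~\ref{lem:KEdef}, combined with Cauchy--Schwarz in $\E_a$, gives by induction on $n$ the standard chain
\[
\|Tf\|_{\E_a}^{2^n} \;\leq\; \|T^{2^n} f\|_{\E_a}\,\|f\|_{\E_a}^{2^n-1}.
\]
The central estimate expands $\|T^{2^n} f\|_{\E_a}$ through $Q$ and then uses the intertwining to shift all powers of $T$ to one side:
\[
\|T^{2^n} f\|_{\E_a}^2 \;=\; \langle QT^{2^n} f, T^{2^n} f\rangle_{L^{2,-1}} \;=\; \langle Qf, T^{2^{n+1}} f\rangle_{L^{2,-1}} \;\leq\; \|Qf\|_{L^{2,-1}}\,\|T\|_{B(L^{2,-1})}^{2^{n+1}}\,\|f\|_{L^{2,-1}}.
\]
Combining with the preceding inequality, taking $2^n$-th roots, and sending $n\to\infty$ gives $\|Tf\|_{\E_a} \leq \|T\|_{B(L^{2,-1})}\,\|f\|_{\E_a}$, valid for every $f$ such that both $f$ and $Qf$ lie in $L^{2,-1}(\Gamma_\alpha)$.

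The main technical obstacle is verifying these domain and integrability conditions on a dense subspace. The natural choice is $\mathcal{D}_a := \mathbb{V}_a L^2_{c,0}(\Gamma_\alpha)$, contained in the domain $\mathbb{V}_a \mathcal{D}(\Gamma_\alpha)$ of $T$ by Lemma~\ref{lem:KEdef} and dense in $\E_a$ by Lemma~\ref{lem:Sdensedef}. For $f = \mathbb{V}_a g$ with $g \in L^2_{c,0}(\Gamma_\alpha)$, the function $f$ has compact support away from the edge, so $f \in L^{2,-1}$; meanwhile $Qf = x^{1-a}\Si_\alpha g$ is smooth near the edge and decays like $|r|^{-2}$ at infinity by Lemma~\ref{lem:Sdensedef}, so the weight $x^{1-a}$ keeps $Qf$ in $L^{2,-1}$ precisely when $a < 1$. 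After extending by density from $\mathcal{D}_a$ to all of $\E_a$, the limit $a\nearrow 1$ in the explicit formula from Theorem~\ref{thm:main1} yields the norm bound $|1 - \alpha/\pi|$.
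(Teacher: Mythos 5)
Your argument follows the paper's own proof almost step for step: symmetrization via iterated Cauchy--Schwarz in $\E_a(\Gamma_\alpha)$, comparison of $\|T\|_{B(\E_a(\Gamma_\alpha))}$ with $\|T\|_{B(L^{2,-1}(\Gamma_\alpha))} = \|\K_\alpha\|_{B(L^{2,2a-1}(\Gamma_\alpha))}$ via Theorem~\ref{thm:main1}, and the limit $a \nearrow 1$. The chain of inequalities is correct (your intertwining $QT = T^{\sharp}Q$ is the paper's use of the $\E_a$-symmetry of $T$ written multiplicatively), and the reduction of everything to the statement that both $f$ and $Qf$ lie in $L^{2,-1}(\Gamma_\alpha)$ on a dense class is exactly the right pivot.

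The one genuine flaw is in your verification of that membership on the class $\mathbb{V}_a L^2_{c,0}(\Gamma_\alpha)$. For $g \in L^2_{c,0}(\Gamma_\alpha)$ whose support meets the edge $\{(0,0,z) \, : \, z \in \R\}$, the function $f = \mathbb{V}_a g$ is \emph{not} supported away from the edge, and $\Si_\alpha g$ is \emph{not} smooth (nor even locally bounded) there; so neither of your two justifications holds as stated. Whether $f \in L^{2,-1}(\Gamma_\alpha)$ then hinges on the integrability of $x^{2a-1}|g|^2$ near $x=0$, and whether $Qf \in L^{2,-1}(\Gamma_\alpha)$ requires controlling $\int x^{1-2a}|\Si_\alpha g|^2\,dx\,dz$ with $1-2a<0$ in the relevant range of $a$ near $1$ --- neither is automatic and both would need an extra argument. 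The repair is what the paper does: shrink the dense class to those $f$ compactly supported in $\Gamma_\alpha \setminus \{(0,0,z) \, : \, z \in \R\}$ with $\int_{\Gamma_\alpha}\mathbb{V}_{-a}f\,d\sigma = 0$ (density requires a small modification of Lemma~\ref{lem:Sdensedef}). Then $f \in L^{2,-1}(\Gamma_\alpha)$ trivially, since $x$ is bounded above and below on the support; and $Qf \in L^{2,-1}(\Gamma_\alpha)$ because on a compact set $K$ containing the support the weight is harmless and $\chi_K\Si_\alpha\mathbb{V}_{-a}f \in L^2$, while off $K$ the decay $\Si_\alpha\mathbb{V}_{-a}f(r) = O(|r|^{-2})$ from Lemma~\ref{lem:Sdensedef} together with $0<a<1$ gives convergence of $\int x^{2(1-a)}(1+x^2+z^2)^{-2}\,dz\,\frac{dx}{x}$. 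With that substitution your proof is complete and coincides with the paper's.
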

\begin{proof}
	Let $0 < a < 1$, and consider a function $f \in L^2(\Gamma_\alpha)$, compactly supported in $\Gamma_\alpha \setminus \{(0,0,z) \, : \, z \in \R\}$ and satisfying
	$$\int_{\Gamma_\alpha} \mathbb{V}_{-a} f \, d\sigma = 0.$$
	  The space of such functions is included in $\mathbb{V}_a \mathcal{D}(\Gamma_\alpha)$ and dense in $\mathcal{E}_a(\Gamma_\alpha)$, which follows from the fact that $\mathbb{V}_a L^2_{c}(\Gamma)$ is dense in $\E_a(\Gamma_\alpha)$, together with a small modification of the proof of Lemma~\ref{lem:Sdensedef}. Let $K \subset \Gamma_\alpha \setminus \{(0,0,z) \, : \, z \in \R\}$ be a compact set such that $f$ is compactly supported in the interior of $K$.  Then 
	  $$\chi_K \mathbb{V}_{1-a} \Si_\alpha \mathbb{V}_{-a} f \in L^{2,-1}(\Gamma_\alpha)$$
	   by the usual mapping properties of the single layer potential on a bounded connected Lipschitz surface. For $r \in \Gamma_\alpha \setminus K$, we have by Lemma~\ref{lem:Sdensedef} that $\Si_\alpha \mathbb{V}_{-a} f(r) = O((1+|r|^{2})^{-1})$. Hence
	  \begin{align*}
	  \|\chi_{\Gamma_\alpha \setminus K} \mathbb{V}_{1-a} \Si_\alpha \mathbb{V}_{-a} f\|_{L^{2,-1}(\Gamma_\alpha)} &\lesssim \int_0^\infty \int_{-\infty}^\infty \frac{x^{2(1-a)}}{(1+x^2+z^2)^2} \, dz\, \frac{dx}{x} \\
	  &= \frac{\pi}{2}\int_0^\infty \frac{x^{2(1-a)}}{(1+x^2)^{3/2}} \, \frac{dx}{x} < \infty,
	  \end{align*}
  	  since $0 < a < 1$. We conclude that 
  	  $$\mathbb{V}_{1-a} \Si_\alpha \mathbb{V}_{-a}f \in L^{2,-1}(\Gamma_\alpha).$$
  	  
  	  Since $f \in \mathbb{V}_a \mathcal{D}(\Gamma_\alpha)$ and the symmetric (at this stage possibly unbounded) operator $\mathbb{V}_a \K_{\alpha} \mathbb{V}_{-a} \colon \E_a(\Gamma_\alpha) \to \E_a(\Gamma_\alpha)$ preserves $\mathbb{V}_a \mathcal{D}(\Gamma_\alpha)$, we find that
  	  $$\|\mathbb{V}_a \K_{\alpha} \mathbb{V}_{-a} f\|_{\E_a(\Gamma_\alpha)} = \langle f, \mathbb{V}_a \K_{\alpha}^2 \mathbb{V}_{-a}f\rangle_{\E_a(\Gamma_\alpha)}^{1/2} \leq \|f\|_{\E_a(\Gamma_\alpha)}^{1/2} \|\mathbb{V}_a \K_{\alpha}^2 \mathbb{V}_{-a} f\|_{\E_a(\Gamma_\alpha)}^{1/2}.$$
  	  Repeating the estimate inductively gives us that
  	  $$\|\mathbb{V}_a \K_{\alpha} \mathbb{V}_{-a} f\|_{\E_a(\Gamma_\alpha)} \leq \|f\|_{\E_a(\Gamma_\alpha)}^{1 - 2^{-j}} \|\mathbb{V}_a \K_{\alpha}^{2^j} \mathbb{V}_{-a} f \|_{\E_a(\Gamma_\alpha)}^{2^{-j}}, \quad j \geq 1.$$
  	   Estimating the right-most norm with the help of \eqref{eq:Eascalar} yields that
  	  \begin{multline*}
  	  \|\mathbb{V}_a \K_{\alpha} \mathbb{V}_{-a} f\|_{\E_a(\Gamma_\alpha)} \leq \|f\|_{\E_a(\Gamma_\alpha)}^{1 - 2^{-j}}  \|\mathbb{V}_{1-a} \Si_{\alpha} \mathbb{V}_{-a} f\|_{L^{2,-1}(\Gamma_\alpha)}^{2^{-j-1}} \|\mathbb{V}_a \K_{\alpha}^{2^{j+1}} \mathbb{V}_{-a} f\|_{L^{2,-1}(\Gamma_\alpha)}^{2^{-j-1}} \\
  	  \leq \|f\|_{\E_a(\Gamma_\alpha)}^{1 - 2^{-j}}  \|\mathbb{V}_{1-a} \Si_{\alpha} \mathbb{V}_{-a} f\|_{L^{2,-1}(\Gamma_\alpha)}^{2^{-j-1}} \|f\|_{L^{2,-1}(\Gamma_\alpha)}^{2^{-j-1}} \|\mathbb{V}_a \K_{\alpha} \mathbb{V}_{-a}\|_{B(L^{2,-1}(\Gamma_\alpha))}.
  	  \end{multline*}
  	  Since $f$ and $\mathbb{V}_{1-a} \Si_{\alpha} \mathbb{V}_{-a} f$ belong to $L^{2,-1}(\Gamma_\alpha)$, in letting $j \to \infty$ we conclude that
  	  $$\|\K_\alpha\|_{B(\E(\Gamma_\alpha))} = \|\mathbb{V}_a \K_{\alpha} \mathbb{V}_{-a}\|_{B(\E_a(\Gamma_\alpha))}\leq \|\mathbb{V}_a \K_{\alpha} \mathbb{V}_{-a}\|_{B(L^{2,-1}(\Gamma_\alpha))},$$
  	  as promised. In particular, $\K_\alpha \colon \E(\Gamma_\alpha) \to \E(\Gamma_\alpha)$ is bounded. By Theorem~\ref{thm:main1},
  	  $$\|\mathbb{V}_a \K_{\alpha} \mathbb{V}_{-a}\|_{B(L^{2,-1}(\Gamma_\alpha))} = \left|\frac{\sin\left( (1-a)(\pi - \alpha) \right)}{\sin\left( (1-a) \pi \right)}\right|.$$
  	  We obtain \eqref{eq:Enormbound} when we let $a \to 1$.
\end{proof}  
\begin{remark}
The reason for not directly considering $a=1$ in the proof is that it appears difficult to find an appropriate dense class of functions $f$ for which $\mathbb{V}_{1-a} \Si_\alpha \mathbb{V}_{-a}f = \Si_\alpha \mathbb{V}_{-1} f \in L^{2,-1}(\Gamma_\alpha)$.
\end{remark}
We are finally in a position to determine the spectrum of $\K_\alpha \colon \E(\Gamma_\alpha) \to \E(\Gamma_\alpha)$. Let us begin by describing an unsuccessful approach, which nonetheless is illuminating. By inspection of \eqref{eq:Kformula} we see that
\begin{equation} \label{eq:Kastformula}
\K_{\alpha}^\ast = \mathbb{V}_1 \K_\alpha \mathbb{V}_{-1}.
\end{equation}
 Hence Lemma~\ref{lem:Eaunitary} for $a=1$ says that $\K_\alpha \colon \E(\Gamma_\alpha) \to \E(\Gamma_\alpha)$ is unitarily equivalent to $\K_{\alpha}^\ast \colon \E_1(\Gamma_\alpha) \to \E_1(\Gamma_\alpha)$. The scalar product of $\E_1(\Gamma_\alpha)$ is given by
\begin{equation} \label{eq:E1scalar}
\langle f, g \rangle_{\E_1(\Gamma_\alpha)} = \langle \widetilde{\Si}_\alpha f, g \rangle_{L^{2,-1}(\Gamma_\alpha)}, \quad f, g \in \mathbb{V}_1 \mathcal{D}(\Gamma_\alpha),
\end{equation}
where $\widetilde{\Si}_\alpha = \Si_\alpha \mathbb{V}_{-1}$. Note that $\widetilde{\Si}_\alpha$ is a block matrix of convolution operators on the group $G$. Plemelj's formula, Lemma~\ref{lem:plemelj}, says that $\widetilde{\Si}_\alpha$ and $\K_\alpha^\ast$ commute,
$$\widetilde{\Si}_\alpha \K_\alpha^\ast f = \Si_{\alpha} \K_\alpha \mathbb{V}_{-1}f = \K_\alpha^\ast\widetilde{\Si}_\alpha f, \quad f \in \mathbb{V}_1\mathcal{D}(\Gamma_\alpha).$$
Suppose that we could construct a suitable square root of $\widetilde{\Si}_\alpha$ which commutes with $\K_\alpha^\ast$. Then, in view of \eqref{eq:E1scalar}, it should be possible to conclude that $$(\widetilde{\Si}_\alpha)^{1/2} \colon \E_1(\Gamma_\alpha) \to L^{2,-1}(\Gamma_\alpha)$$
is a unitary map. It would hence follow that $\K_\alpha \colon \E(\Gamma_\alpha) \to \E(\Gamma_\alpha)$ is unitarily equivalent to $\K_\alpha^\ast \colon L^{2,-1}(\Gamma_\alpha) \to L^{2,-1}(\Gamma_\alpha)$, which in turn, by \eqref{eq:Kastformula}, is unitarily equivalent to $\K_\alpha \colon L^{2,1}(\Gamma_\alpha) \to L^{2,1}(\Gamma_\alpha)$. We have already computed the spectrum of this latter operator in Theorem~\ref{thm:main1}.

Unfortunately, while the scalar product \eqref{eq:E1scalar} is a positive definite form, it is not clear to the author how to construct the desired square root. Therefore we will compare $\K_{\alpha}^\ast \colon \mathcal{E}_1(\Gamma_\alpha) \to \mathcal{E}_1(\Gamma_\alpha)$ with $\K_{\alpha}^\ast \colon L^{2,-1}(\Gamma_\alpha) \to L^{2,-1}(\Gamma_\alpha)$ in an indirect way, yielding slightly less information.

\begin{thm} \label{thm:main2}
	The bounded self-adjoint operator $\K_\alpha \colon \E(\Gamma_\alpha) \to \E(\Gamma_\alpha)$ satisfies that
	$$\sigma(\K_\alpha, \E(\Gamma_\alpha)) = \sigma_{\textrm{ess}}(\K_\alpha, \E(\Gamma_\alpha)) = \sigma(\K_\alpha, L^{2,1}(\Gamma_\alpha)) = [-|1 - \alpha/\pi|, |1 - \alpha/\pi|].$$
\end{thm}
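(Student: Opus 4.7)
The plan falls into two inclusions and a short argument that the spectrum is purely essential. The inclusion $\sigma(\K_\alpha, \E(\Gamma_\alpha)) \subseteq \sigma(\K_\alpha, L^{2,1}(\Gamma_\alpha))$ is immediate: Lemmas~\ref{lem:plemelj} and \ref{lem:KEdef}, combined with Theorem~\ref{thm:Enormbound}, show that $\K_\alpha$ extends to a bounded self-adjoint operator on $\E(\Gamma_\alpha)$ of norm at most $|1-\alpha/\pi|$, so its spectrum is real and of modulus at most $|1-\alpha/\pi|$, hence contained in $\sigma(\K_\alpha, L^{2,1}(\Gamma_\alpha)) = [-|1-\alpha/\pi|,|1-\alpha/\pi|]$ by Theorem~\ref{thm:main1}.

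For the reverse inclusion, the plan is to compare $\K_\alpha^\ast$ on the two Hilbert spaces $\E_1(\Gamma_\alpha)$ and $L^{2,-1}(\Gamma_\alpha)$, which are connected through their common dense subspace $\mathbb{V}_1\mathcal{D}(\Gamma_\alpha)$ by Lemma~\ref{lem:Eaunitary} (at $a=1$) and the identity $\K_\alpha^\ast = \mathbb{V}_1\K_\alpha\mathbb{V}_{-1}$ of \eqref{eq:Kastformula}, which in particular yields $\sigma(\K_\alpha^\ast, L^{2,-1}(\Gamma_\alpha)) = \sigma(\K_\alpha, L^{2,1}(\Gamma_\alpha))$. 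Given $\lambda \in (-|1-\alpha/\pi|,|1-\alpha/\pi|)$, we will construct a Weyl sequence for $\K_\alpha^\ast - \lambda$ directly in $\E_1(\Gamma_\alpha)$. The starting point is the Fourier-analytic construction of Section~\ref{sec:conv}: after applying $\PP_\pm$, $\K_\alpha^\ast$ becomes multiplication by the Mellin symbol $T_{\alpha,1}^\pm$ on the Hilbert--Schmidt class, and Proposition~\ref{prop:specmult} together with Proposition~\ref{prop:Tspectrum} yields singular sequences of rank-one form $S_{\eta_n,h_n}$, with $(\eta_n) \subset L^2(\R_+)$ a singular sequence for $T_{\alpha,1}^\pm - \lambda$ and the auxiliary factors $(h_n) \subset L^2(\R_+)$ at our disposal. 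The Plemelj identity makes $\widetilde{\Si}_\alpha$ commute with $\K_\alpha^\ast$, so on the Fourier side the $\E_1$-inner product $\langle \widetilde{\Si}_\alpha \cdot,\cdot\rangle_{L^{2,-1}}$ takes the form of a $\widetilde{\Si}_\alpha$-weighted Hilbert--Schmidt pairing whose operator-valued weight commutes with $T_{\alpha,1}^\pm$; the aim is to choose the factors $(h_n)$ so that this weight stays bounded above and away from zero along the sequence, thereby converting the $L^{2,-1}$-Weyl sequence into an $\E_1$-Weyl sequence and giving $\lambda \in \sigma(\K_\alpha^\ast,\E_1(\Gamma_\alpha))$.

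The main obstacle is precisely this weight control: since $s_0,s_\alpha \notin L^1(G)$, the symbol of $\widetilde{\Si}_\alpha$ is an unbounded positive operator-valued multiplier that may degenerate or blow up at certain Mellin frequencies, so a careful frequency localization of $(h_n)$---presumably the "indirect" comparison alluded to in the discussion preceding the theorem---will be required to keep the weight tame throughout the sequence. Once the Weyl sequences are in hand, the two inclusions combine to give the spectral identity. Finally, $\sigma(\K_\alpha,\E(\Gamma_\alpha)) = \sigma_{\ess}(\K_\alpha,\E(\Gamma_\alpha))$ is then automatic: a closed interval has no isolated points, and by self-adjointness no point of the spectrum can be an isolated eigenvalue of finite multiplicity, so every point lies in the Fredholm-essential spectrum.
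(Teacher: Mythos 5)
Your first inclusion and the closing observation that an interval of spectrum of a self-adjoint operator is automatically essential are both fine and agree with the paper. The problem is the reverse inclusion, which carries all the content of the theorem: what you give there is a plan whose crucial step is explicitly left open, and the mechanism you propose for closing it cannot work. After applying $\PP_\pm$, the $\E_1$-pairing becomes (formally, since $s_0, s_\alpha \notin L^1(G)$) a pairing $\langle S W, S' \rangle_{\mathcal{S}_2}$ with $W$ the operator symbol of $\widetilde{\Si}_\alpha$ acting by right multiplication. Right multiplication sends the rank-one operator $S_{\eta,h}$ to $S_{W^\ast \eta, h}$, and a short computation gives $\langle S_{\eta,h}W, S_{\eta,h}\rangle_{\mathcal{S}_2} = \|h\|^2 \langle W\eta, \eta\rangle$. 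Thus the weight acts entirely on the singular factor $\eta_n$, which is already pinned down as a singular sequence for $T_{\alpha,1}^\pm - \lambda$, while the auxiliary factor $h_n$ contributes only the scalar $\|h_n\|^2$. There is no freedom in $(h_n)$ with which to keep the weight ``bounded above and away from zero''; you would have to control $\langle W \eta_n, \eta_n\rangle$ for the singular sequence itself, which is exactly the degeneration problem you flag as the main obstacle and do not resolve. This is essentially the same difficulty the paper concedes it cannot handle head-on in the discussion of the square root of $\widetilde{\Si}_\alpha$ preceding the theorem.

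The paper's actual proof of the reverse inclusion is a soft functional-calculus comparison that avoids Weyl sequences in $\E_1(\Gamma_\alpha)$ altogether. If $A \subset I \setminus \sigma(\K_\alpha^\ast, \E_1(\Gamma_\alpha))$ were a nonempty open set, one chooses a continuous $h$ supported in $A$ and $f \in \mathbb{V}_1 L^2_c(\Gamma_\alpha)$ with $h(\K_\alpha^\ast|_{L^{2,-1}})f \neq 0$, which is possible because $\sigma(\K_\alpha^\ast, L^{2,-1}(\Gamma_\alpha)) = I$ by Theorem~\ref{thm:main1}, and approximates $h$ uniformly on $I$ by polynomials $h_n$. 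Since $f$ lies in the common invariant subspace $\mathbb{V}_1\mathcal{D}(\Gamma_\alpha)\cap L^{2,-1}(\Gamma_\alpha)$, the vectors $h_n(\K_\alpha^\ast)f$ are unambiguous; pairing them against a suitable $g$ lying in both dual spaces gives a nonzero limit via the $L^{2,-1}$ functional calculus but a zero limit via the $\E_1$ functional calculus, because $h \equiv 0$ on $\sigma(\K_\alpha^\ast, \E_1(\Gamma_\alpha))$. This contradiction forces $\sigma(\K_\alpha, \E(\Gamma_\alpha)) = I$. Some such indirect argument is needed; as written, your proposal does not establish the reverse inclusion.
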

\begin{proof}
	Let $I = [-|1 - \alpha/\pi|, |1 - \alpha/\pi|]$. By Theorem~\ref{thm:Enormbound} we already know that 
	\begin{equation*} 
	\sigma(\K_\alpha, \E(\Gamma_\alpha)) =  \sigma(\K_\alpha^\ast, \E_1(\Gamma_\alpha)) \subset I.
	\end{equation*}
	Suppose that $A$ is a non-empty open subset of $I \setminus \sigma(\K_\alpha^\ast, \E_1(\Gamma_\alpha)).$ Since, by Theorem~\ref{thm:main1},
	$$\sigma(\K_\alpha^\ast, L^{2,-1}(\Gamma_\alpha)) = \sigma(\K_\alpha, L^{2,1}(\Gamma_\alpha)) = I$$
	we can, by the functional calculus of the self-adjoint operator $\K_\alpha^\ast \colon L^{2,-1}(\Gamma_\alpha) \to L^{2,-1}(\Gamma_\alpha)$, choose a continuous function $h$ with compact support in $A$ and a function $f \in \mathbb{V}_1 L^2_c(\Gamma_\alpha)$ such that $h(\K_\alpha^\ast|_{L^{2,-1}})f \neq 0$. Let $(h_n)_{n=1}^\infty$ be a sequence of polynomials such that $h_n \to h$ uniformly on $I$ as $n \to \infty$. Then $$h_n(\K_\alpha^\ast|_{L^{2,-1}})f \to h(\K_\alpha^\ast|_{L^{2,-1}})f$$
	 in $L^{2,-1}(\Gamma_\alpha)$, as n $\to \infty$. Accordingly, let $g \in \mathbb{V}_{-1} \Lambda^{-1} C_c^\infty(\R^2) \cap L^{2,1}(\Gamma_\alpha)$ be such that 
	\begin{equation} \label{eq:main2}
	\langle h_n(\K_\alpha^\ast|_{L^{2,-1}})f, g \rangle_{L^2(\Gamma_\alpha)} \to c \neq 0, \quad n \to \infty.
	\end{equation}
	On the other hand, $g \in (\E_1(\Gamma_\alpha))^\ast$, the dual space understood with respect to the $L^2(\Gamma_\alpha)$-pairing, since $\E_1(\Gamma_\alpha) \simeq \mathbb{V}_1 \dot{H}^{-1/2}(\Gamma_\alpha)$ by Theorem~\ref{thm:homsobolev}. Furthermore,
	$$f \in \mathbb{V}_1 \mathcal{D}(\Gamma_\alpha) \cap L^{2,-1}(\Gamma_\alpha) \subset \E_1(\Gamma_\alpha) \cap L^{2,-1}(\Gamma_\alpha),$$
	and $\K_\alpha^\ast$ preserves the space $\mathbb{V}_1 \mathcal{D}(\Gamma_\alpha) \cap L^{2,-1}(\Gamma_\alpha)$, by \eqref{eq:Kastformula} and Lemmas~\ref{lem:l2norm} and \ref{lem:KEdef}. Since $h_n \to 0$ uniformly on $\sigma(\K_\alpha^\ast, \E_1(\Gamma_\alpha))$, we conclude that
	$$\langle h_n(\K_\alpha^\ast|_{L^{2,-1}})f, g \rangle_{L^2(\Gamma_\alpha)} = \langle h_n(\K_\alpha^\ast|_{\E_1})f, g \rangle_{L^2(\Gamma_\alpha)} \to 0, \quad n \to \infty,$$
	a contradiction to \eqref{eq:main2}. Hence it must have been that 
	$$\sigma(\K_\alpha, \E(\Gamma_\alpha)) = I.$$
		Since the spectrum of the self-adjoint operator $\K_\alpha \colon \E(\Gamma_\alpha) \to \E(\Gamma_\alpha)$ is an interval, a set without isolated points, it is of course essential.
\end{proof}
\begin{remark}
The statement of Theorem~\ref{thmx:B} follows by combining Theorems~\ref{thm:main1} and \ref{thm:main2}.
\end{remark}
To apply Theorem~\ref{thm:main2} to the transmission problem, we need to define the normal derivatives $\partial_n^{+} V$ and $\partial_n^{-} V$ of interior and exterior approach for harmonic functions $V \in \dot{H}^1_h(\Gamma_{\alpha,\pm})$, see Section~\ref{subsec:singpot}. Suppose that $U = \Si_\alpha f$ and $V = \Si_{\alpha} g$ for some charges $f,g \in L^2_{c,0}(\Gamma_\alpha)$. Then, by the usual approximation argument with bounded Lipschitz domains, Green's formula 
$$\langle \nabla U, \nabla V \rangle_{L^2(\Gamma_{\alpha, \pm})} = \pm \int_{\Gamma_\alpha} \Tr_{\pm} U \overline{\partial_n^{\pm} V} \, d\sigma = \pm \langle \Tr_{\pm}U, \partial_n^{\pm} V \rangle_{L^2(\Gamma_\alpha)}$$
holds, where the normal derivatives $\partial_n^{\pm} V$ are given by \eqref{eq:nttrace}. Recall that $L^2_{c,0}(\Gamma_{\alpha})$ is dense in $\E(\Gamma_\alpha)$ and that $\Si_{\alpha} \colon \E(\Gamma_\alpha) \to \dot{H}^1_h(\Gamma_{\alpha, \pm})$ and $\Tr_{\pm} \colon \dot{H}^1_h(\Gamma_{\alpha,\pm}) \to \dot{H}^{1/2}(\Gamma_{\alpha})$ are isomorphisms, by Lemma~\ref{lem:Sdensedef} and Corollary~\ref{cor:dirichletE}, respectively. It follows that $\partial_n^{\pm} V \in \E(\Gamma_{\alpha}) \simeq (\dot{H}^{1/2}(\Gamma_{\alpha}))^\ast$, and, furthermore, that $\partial_n^{\pm}$ extends continuously to a bounded map 
$$\partial_n^{\pm} \colon \dot{H}^1_h(\Gamma_{\alpha,\pm}) \to \E(\Gamma_{\alpha}).$$ Since $\K_\alpha \colon \E(\Gamma_{\alpha}) \to \E(\Gamma_{\alpha})$ is bounded, the jump formulas \eqref{eq:L2jumpformulas} extend continuously to $\E(\Gamma_\alpha)$,
\begin{equation} \label{eq:Ejumpformulas}
\partial_n^{+} \Si_\alpha f = \frac{1}{2}(f - \K_\alpha f), \quad \partial_n^{-} \Si_\alpha f(r) = \frac{1}{2}(-f - \K_\alpha f), \quad f \in \E(\Gamma_\alpha).
\end{equation}

\begin{cor}
Let $1 \neq \epsilon \in \mathbb{C}$ and $f \in \dot{H}^{1/2}(\Gamma_\alpha)$. Then the transmission problem 
$$\begin{cases} 
\int_{\R^3} |\nabla U|^2 \, dV < \infty, \\
\Delta U = 0 \textrm{ in } \Gamma_{\alpha, +}\cup\Gamma_{\alpha, -}, \\
\Tr_+ U - \Tr_- U = f \in \dot{H}^{1/2}(\Gamma_\alpha), \\
\partial_n^+ U - \epsilon \partial_n^- U = g \in \dot{H}^{-1/2}(\Gamma_{\alpha})
\end{cases}$$
is well posed (modulo constants) for all $g \in \E(\Gamma_{\alpha}) \simeq  \dot{H}^{-1/2}(\Gamma_{\alpha})$ if and only if 
\begin{equation} \label{eq:Esolvcond}
\frac{1+\epsilon}{1-\epsilon} \notin [-|1 - \alpha/\pi|, |1 - \alpha/\pi|].
\end{equation}
\end{cor}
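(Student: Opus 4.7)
The plan is to mirror the argument for the $L^2$-Corollary, but using the energy-space layer potential theory developed in Sections~\ref{sec:energy} and \ref{sec:Espec}. The ingredients are in place: $\Si_{\alpha}\colon \E(\Gamma_\alpha)\to \dot{H}^{1/2}(\Gamma_\alpha)$ is an isomorphism by Corollary~\ref{cor:Sbdaryisom} (combined with Theorem~\ref{thm:homsobolev}), $\Si_\alpha\colon \E(\Gamma_\alpha)\to \dot{H}^1_h(\Gamma_{\alpha,\pm})$ is an isomorphism by Corollary~\ref{cor:dirichletE}, the energy-space jump formulas \eqref{eq:Ejumpformulas} hold, and Theorem~\ref{thm:main2} gives the spectrum of $\K_\alpha\colon\E(\Gamma_\alpha)\to\E(\Gamma_\alpha)$.

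First, I would reduce the transmission problem to a system on the boundary. Well-posedness of the interior and exterior Dirichlet problems in $\dot{H}^1_h(\Gamma_{\alpha,\pm})$ (Corollary~\ref{cor:dirichletE}) provides, for any admissible $U$, unique densities $h_{\pm}\in\E(\Gamma_\alpha)$ and a constant $c$ such that $U=\Si_\alpha h_+ + c$ in $\Gamma_{\alpha,+}$ and $U=\Si_\alpha h_- + c$ in $\Gamma_{\alpha,-}$. Applying the trace identity $\Tr_{\pm}\Si_\alpha h = \Si_\alpha h$ and the jump formulas \eqref{eq:Ejumpformulas}, the two transmission conditions become
\begin{equation*}
\Si_\alpha(h_+ - h_-) = f \quad \text{in } \dot{H}^{1/2}(\Gamma_\alpha),
\end{equation*}
\begin{equation*}
\tfrac{1}{2}(I - \K_\alpha) h_+ + \tfrac{\epsilon}{2}(I+\K_\alpha) h_- = g \quad \text{in } \E(\Gamma_\alpha).
\end{equation*}
Eliminating $h_-$ via $h_- = h_+ - \Si_\alpha^{-1} f$, the second equation rearranges to
\begin{equation*}
\left(\K_\alpha - \tfrac{1+\epsilon}{1-\epsilon} I\right) h_+ \;=\; -\tfrac{1}{1-\epsilon}\left[\, 2g + \epsilon(\K_\alpha + I)\Si_\alpha^{-1} f\,\right],
\end{equation*}
an equation in $\E(\Gamma_\alpha)$, making use of the boundedness of $\K_\alpha$ on $\E(\Gamma_\alpha)$ from Theorem~\ref{thm:Enormbound}.

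Next, I would invoke Theorem~\ref{thm:main2}: the operator $\K_\alpha - \lambda I\colon \E(\Gamma_\alpha)\to \E(\Gamma_\alpha)$ is invertible if and only if $\lambda\notin [-|1-\alpha/\pi|,\,|1-\alpha/\pi|]$. If $\tfrac{1+\epsilon}{1-\epsilon}$ lies outside this interval, the system has a unique solution $h_+\in\E(\Gamma_\alpha)$ for every $g\in\E(\Gamma_\alpha)$ and every $f\in\dot{H}^{1/2}(\Gamma_\alpha)$, and then $h_-$ is determined from the first equation; the corresponding $U$ is uniquely determined modulo the additive constant $c$. Conversely, if $\lambda = \tfrac{1+\epsilon}{1-\epsilon}$ belongs to the spectrum, then, since the spectrum equals the essential spectrum, $\K_\alpha - \lambda$ fails to be surjective on $\E(\Gamma_\alpha)$, so one may choose $g\in\E(\Gamma_\alpha)$ (and $f=0$) for which no $h_+$ exists, and any would-be solution $U$ would, by the uniqueness from the Dirichlet problem, have to come from such an $h_+$; this obstructs well-posedness.

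The only mildly delicate point is the converse: one must ensure that the single-layer ansatz exhausts all admissible solutions, so that non-surjectivity of $\K_\alpha - \lambda$ on $\E(\Gamma_\alpha)$ really translates into non-solvability of the transmission problem. This is guaranteed by the isomorphism $\Si_\alpha\colon \E(\Gamma_\alpha)\to \dot{H}^1_h(\Gamma_{\alpha,\pm})$ of Corollary~\ref{cor:dirichletE}, together with the continuity of $\partial_n^{\pm}\colon \dot{H}^1_h(\Gamma_{\alpha,\pm})\to \E(\Gamma_\alpha)$ motivated just above \eqref{eq:Ejumpformulas}, which together make the reduction to the boundary system reversible.
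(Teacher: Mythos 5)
Your proposal is correct and follows essentially the same route as the paper: represent $U$ via single layer potentials using Corollary~\ref{cor:dirichletE}, reduce to the boundary system via the jump formulas \eqref{eq:Ejumpformulas}, and conclude with the isomorphism $\Si_\alpha \colon \E(\Gamma_\alpha) \to \dot{H}^{1/2}(\Gamma_\alpha)$ and the spectral characterization of Theorem~\ref{thm:main2}. The paper's proof is just a more compressed version of the same argument, leaving the elimination of $h_-$ and the converse direction implicit.
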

\begin{proof}
By Corollary~\ref{cor:dirichletE} there are densities $h_\pm \in \E(\Gamma_{\alpha})$ and a constant $c$ such that $U = \Si_{\alpha} h_+ + c$ in $\Gamma_{\alpha, +}$ and $U = \Si_{\alpha} h_- + c$ in $\Gamma_{\alpha, -}$. By the jump formulas \eqref{eq:Ejumpformulas}, the transmission problem is then equivalent to the system
$$\begin{cases}
\Si_\alpha(h_+ - h_-) = f, \\
\left(\K_{\alpha} - \frac{1+\epsilon}{1-\epsilon}\mathbb{I}\right) h_+ = -\frac{1}{1-\epsilon} \left[2g + \epsilon(\K_\alpha+\mathbb{I})(h_+ - h_-) \right]
\end{cases}$$
on $\Gamma_{\alpha}$, where $\mathbb{I}$ denotes the identity map. This system is uniquely solvable if and only if \eqref{eq:Esolvcond} holds, by Corollary~\ref{cor:Sbdaryisom} and Theorem~\ref{thm:main2}.
\end{proof}

\bibliographystyle{amsplain}
\bibliography{wedgespec}

\providecommand{\bysame}{\leavevmode\hbox to3em{\hrulefill}\thinspace}
\providecommand{\MR}{\relax\ifhmode\unskip\space\fi MR }
\providecommand{\MRhref}[2]{%
  \href{http://www.ams.org/mathscinet-getitem?mr=#1}{#2}
}
\providecommand{\href}[2]{#2}
\begin{thebibliography}{10}

\bibitem{AS72}
M.~Abramowitz and I.~A. Stegun, \emph{Handbook of mathematical functions with
  formulas, graphs, and mathematical tables}, 9th printing, Dover, New York,
  1972.

\bibitem{ASSE}
Andrea Al\`u, M\'ario~G. Silveirinha, Alessandro Salandrino, and Nader Engheta,
  \emph{Epsilon-near-zero metamaterials and electromagnetic sources: Tailoring
  the radiation phase pattern}, Phys. Rev. B \textbf{75} (2007), 155410.

\bibitem{AMRZ}
Habib Ammari, Pierre Millien, Matias Ruiz, and Hai Zhang, \emph{Mathematical
  analysis of plasmonic nanoparticles: the scalar case}, Arch. Ration. Mech.
  Anal. \textbf{224} (2017), no.~2, 597--658.

\bibitem{ARYZ}
Habib Ammari, Matias Ruiz, Sanghyeon Yu, and Hai Zhang, \emph{Mathematical
  analysis of plasmonic resonances for nanoparticles: the full {M}axwell
  equations}, J. Differential Equations \textbf{261} (2016), no.~6, 3615--3669.

\bibitem{BCD11}
Hajer Bahouri, Jean-Yves Chemin, and Rapha\"el Danchin, \emph{Fourier analysis
  and nonlinear partial differential equations}, Grundlehren der Mathematischen
  Wissenschaften, vol. 343, Springer, Heidelberg, 2011.

\bibitem{BZ17}
Eric Bonnetier and Hai Zhang, \emph{Characterization of the essential spectrum
  of the {N}eumann--{P}oincar\'e operator in 2{D} domains with corner via
  {W}eyl sequences}, to appear in Rev. Mat. Iberoam.

\bibitem{CHM15}
S.~N. Chandler-Wilde, D.~P. Hewett, and A.~Moiola, \emph{Interpolation of
  {H}ilbert and {S}obolev spaces: quantitative estimates and counterexamples},
  Mathematika \textbf{61} (2015), no.~2, 414--443.

\bibitem{Cost07}
Martin Costabel, \emph{Some historical remarks on the positivity of boundary
  integral operators}, Boundary element analysis, Lect. Notes Appl. Comput.
  Mech., vol.~29, Springer, Berlin, 2007, pp.~1--27.

\bibitem{CM85}
Martin Costabel and Ernst Stephan, \emph{A direct boundary integral equation
  method for transmission problems}, J. Math. Anal. Appl. \textbf{106} (1985),
  no.~2, 367--413.

\bibitem{DK87}
Bj\"orn E.~J. Dahlberg and Carlos~E. Kenig, \emph{Hardy spaces and the
  {N}eumann problem in {$L^p$} for {L}aplace's equation in {L}ipschitz
  domains}, Ann. of Math. (2) \textbf{125} (1987), no.~3, 437--465.

\bibitem{NEPV12}
Eleonora Di~Nezza, Giampiero Palatucci, and Enrico Valdinoci,
  \emph{Hitchhiker's guide to the fractional {S}obolev spaces}, Bull. Sci.
  Math. \textbf{136} (2012), no.~5, 521--573.

\bibitem{Din96}
Z.~Ding, \emph{A proof of the trace theorem of {S}obolev spaces on {L}ipschitz
  domains}, Proc. Amer. Math. Soc. \textbf{124} (1996), no.~2, 591--600.

\bibitem{DM72}
L.~Dobrzynski and A.~A. Maradudin, \emph{Electrostatic edge modes in a
  dielectric wedge}, Phys. Rev. B \textbf{6} (1972), 3810--3815.

\bibitem{DM76}
M.~Duflo and Calvin~C. Moore, \emph{On the regular representation of a
  nonunimodular locally compact group}, J. Functional Analysis \textbf{21}
  (1976), no.~2, 209--243.

\bibitem{EL12}
Amit Einav and Michael Loss, \emph{Sharp trace inequalities for fractional
  {L}aplacians}, Proc. Amer. Math. Soc. \textbf{140} (2012), no.~12,
  4209--4216.

\bibitem{Els87}
Johannes Elschner, \emph{Asymptotics of solutions to pseudodifferential
  equations of {M}ellin type}, Math. Nachr. \textbf{130} (1987), no.~1,
  267--305.

\bibitem{EFV92}
L.~Escauriaza, E.~B. Fabes, and G.~Verchota, \emph{On a regularity theorem for
  weak solutions to transmission problems with internal {L}ipschitz
  boundaries}, Proc. Amer. Math. Soc. \textbf{115} (1992), no.~4, 1069--1076.

\bibitem{EM04}
Luis Escauriaza and Marius Mitrea, \emph{Transmission problems and spectral
  theory for singular integral operators on {L}ipschitz domains}, J. Funct.
  Anal. \textbf{216} (2004), no.~1, 141--171.

\bibitem{ET79}
Pierre Eymard and Marianne Terp, \emph{La transformation de {F}ourier et son
  inverse sur le groupe des {$ax+b$}\ d'un corps local}, Analyse harmonique sur
  les groupes de {L}ie ({S}\'em., {N}ancy-{S}trasbourg 1976--1978), {II},
  Lecture Notes in Math., vol. 739, pp.~207--248.

\bibitem{FJR78}
E.~B. Fabes, M.~Jodeit, Jr., and N.~M. Rivi\`ere, \emph{Potential techniques
  for boundary value problems on {$C^{1}$}-domains}, Acta Math. \textbf{141}
  (1978), no.~3-4, 165--186.

\bibitem{FJL77}
E.~B. Fabes, Max Jodeit, Jr., and Jeff~E. Lewis, \emph{Double layer potentials
  for domains with corners and edges}, Indiana Univ. Math. J. \textbf{26}
  (1977), no.~1, 95--114.

\bibitem{FMM98}
Eugene Fabes, Osvaldo Mendez, and Marius Mitrea, \emph{Boundary layers on
  {S}obolev-{B}esov spaces and {P}oisson's equation for the {L}aplacian in
  {L}ipschitz domains}, J. Funct. Anal. \textbf{159} (1998), no.~2, 323--368.

\bibitem{Fuhr06}
Hartmut F\"uhr, \emph{Hausdorff-{Y}oung inequalities for group extensions},
  Canad. Math. Bull. \textbf{49} (2006), no.~4, 549--559.

\bibitem{GM47}
I.~Gelfand and M.~Neumark, \emph{Unitary representations of the group of linear
  transformations of the straight line}, C. R. (Doklady) Acad. Sci URSS (N.S.)
  \textbf{55} (1947), 567--570.

\bibitem{GM90}
N.~V. Grachev and V.~G. Maz'ya, \emph{The {F}redholm radius of integral
  operators of potential theory}, Nonlinear equations and variational
  inequalities. {L}inear operators and spectral theory ({R}ussian), Probl. Mat.
  Anal., vol.~11, 1990, (Translated in J. Soviet Math. {{\bf{6}}4} (1993), no.
  6, 1297--1313), pp.~109--133, 251.

\bibitem{HSS05}
S.~Hassi, Z.~Sebesty\'en, and H.~S.~V. de~Snoo, \emph{On the nonnegativity of
  operator products}, Acta Math. Hungar. \textbf{109} (2005), no.~1-2, 1--14.

\bibitem{HP13}
Johan Helsing and Karl-Mikael Perfekt, \emph{On the polarizability and
  capacitance of the cube}, Appl. Comput. Harmon. Anal. \textbf{34} (2013),
  no.~3, 445--468.

\bibitem{HP18}
\bysame, \emph{The spectra of harmonic layer potential operators on domains
  with rotationally symmetric conical points}, J. Math. Pures Appl. (2018),
  Article in Press.

\bibitem{HMT10}
Steve Hofmann, Marius Mitrea, and Michael Taylor, \emph{Singular integrals and
  elliptic boundary problems on regular {S}emmes-{K}enig-{T}oro domains}, Int.
  Math. Res. Not. IMRN (2010), no.~14, 2567--2865.

\bibitem{KLY15}
Hyeonbae Kang, Mikyoung Lim, and Sanghyeon Yu, \emph{Spectral resolution of the
  {N}eumann-{P}oincar\'e operator on intersecting disks and analysis of plasmon
  resonance}, Arch. Ration. Mech. Anal. \textbf{226} (2017), no.~1, 83--115.

\bibitem{Ken85}
Carlos~E. Kenig, \emph{Recent progress on boundary value problems on
  {L}ipschitz domains}, Pseudodifferential operators and applications ({N}otre
  {D}ame, {I}nd., 1984), Proc. Sympos. Pure Math., vol.~43, Amer. Math. Soc.,
  Providence, RI, 1985, pp.~175--205.

\bibitem{Khal74}
Idriss Khalil, \emph{Sur l'analyse harmonique du groupe affine de la droite},
  Studia Math. \textbf{51} (1974), 139--167.

\bibitem{KPS07}
Dmitry Khavinson, Mihai Putinar, and Harold~S. Shapiro, \emph{Poincar\'e's
  variational problem in potential theory}, Arch. Ration. Mech. Anal.
  \textbf{185} (2007), no.~1, 143--184.

\bibitem{KR78}
Abel Klein and Bernard Russo, \emph{Sharp inequalities for {W}eyl operators and
  {H}eisenberg groups}, Math. Ann. \textbf{235} (1978), no.~2, 175--194.

\bibitem{KL72}
Adam Kleppner and Ronald~L. Lipsman, \emph{The {P}lancherel formula for group
  extensions. {I}, {II}}, Ann. Sci. \'Ecole Norm. Sup. (4) \textbf{5} (1972),
  459--516; ibid. (4) 6 (1973), 103--132.

\bibitem{KMR97}
V.~A. Kozlov, V.~G. Maz'ya, and J.~Rossmann, \emph{Elliptic boundary value
  problems in domains with point singularities}, Mathematical Surveys and
  Monographs, vol.~52, American Mathematical Society, Providence, RI, 1997.

\bibitem{Kre98}
M.~G. Krein, \emph{Compact linear operators on functional spaces with two
  norms}, Integral Equations Operator Theory \textbf{30} (1998), no.~2,
  140--162, Translated from the Ukranian, Dedicated to the memory of Mark
  Grigorievich Krein (1907--1989).

\bibitem{Lew90}
Jeff~E. Lewis, \emph{Layer potentials for elastostatics and hydrostatics in
  curvilinear polygonal domains}, Trans. Amer. Math. Soc. \textbf{320} (1990),
  no.~1, 53--76.

\bibitem{Lew91}
\bysame, \emph{A symbolic calculus for layer potentials on {$C^1$} curves and
  {$C^1$} curvilinear polygons}, Proc. Amer. Math. Soc. \textbf{112} (1991),
  no.~2, 419--427.

\bibitem{LP83}
Jeff~E. Lewis and Cesare Parenti, \emph{Pseudodifferential operators of
  {M}ellin type}, Comm. Partial Differential Equations \textbf{8} (1983),
  no.~5, 477--544.

\bibitem{LM72}
J.-L. Lions and E.~Magenes, \emph{Non-homogeneous boundary value problems and
  applications. {V}ol. {I}}, Springer-Verlag, New York-Heidelberg, 1972,
  Translated from the French by P. Kenneth, Die Grundlehren der mathematischen
  Wissenschaften, Band 181.

\bibitem{Garnett}
J.C Maxwell~Garnett, \emph{{VII}. {C}olours in metal glasses, in metallic
  films, and in metallic solutions.{\textemdash}{II}}, Philos. Trans. Royal
  Soc. A \textbf{205} (1906), no.~387-401, 237--288.

\bibitem{McCar92}
John~E. McCarthy, \emph{Geometric interpolation between {H}ilbert spaces}, Ark.
  Mat. \textbf{30} (1992), no.~2, 321--330.

\bibitem{Medk18}
Dagmar Medkov\'a, \emph{The {L}aplace equation: {B}oundary value problems on
  bounded and unbounded {L}ipschitz domains}, Springer International
  Publishing, 2018.

\bibitem{Mil02}
G.~W. Milton, \emph{The theory of composites}, Cambridge University Press,
  Cambridge, UK, 2002.

\bibitem{Mit02}
Irina Mitrea, \emph{On the spectra of elastostatic and hydrostatic layer
  potentials on curvilinear polygons}, J. Fourier Anal. Appl. \textbf{8}
  (2002), no.~5, 443--487.

\bibitem{MW12}
Marius Mitrea and Matthew Wright, \emph{Boundary value problems for the
  {S}tokes system in arbitrary {L}ipschitz domains}, Ast\'erisque (2012),
  no.~344, viii+241.

\bibitem{NMM93}
N.~A. Nicorovici, R.C McPhedran, and G.~W. Milton, \emph{Transport properties
  of a three-phase composite material: the square array of coated cylinders},
  Proc. R. Soc. Lond. A \textbf{442} (1993), no.~1916, 599--620.

\bibitem{NL95}
K.~I. Nikoshkinen and I.~V. Lindell, \emph{Image solution for {P}oisson's
  equation in wedge geometry}, IEEE. T. Antenn. Propag. \textbf{43} (1995),
  no.~2, 179--187.

\bibitem{PP14}
Karl-Mikael Perfekt and Mihai Putinar, \emph{Spectral bounds for the
  {N}eumann-{P}oincar\'e operator on planar domains with corners}, J. Anal.
  Math. \textbf{124} (2014), no.~1, 39--57.

\bibitem{PP16}
\bysame, \emph{The {E}ssential {S}pectrum of the {N}eumann--{P}oincar\'e
  {O}perator on a {D}omain with {C}orners}, Arch. Ration. Mech. Anal.
  \textbf{223} (2017), no.~2, 1019--1033.

\bibitem{Qiao18}
Yu~Qiao, \emph{Double layer potentials on three-dimensional wedges and
  pseudodifferential operators on {L}ie groupoids}, J. Appl. Math. Anal. Appl.
  \textbf{462} (2018), no.~1, 428--447.

\bibitem{NQ12}
Yu~Qiao and Victor Nistor, \emph{Single and double layer potentials on domains
  with conical points {I}: {S}traight cones}, Integral Equations Operator
  Theory \textbf{72} (2012), no.~3, 419--448.

\bibitem{Schar04}
R.~W. Scharstein, \emph{Green's function for the harmonic potential of the
  three-dimensional wedge transmission problem}, IEEE. T. Antenn. Propag.
  \textbf{52} (2004), no.~2, 452--460.

\bibitem{Schm12}
Konrad Schm\"udgen, \emph{Unbounded self-adjoint operators on {H}ilbert space},
  Graduate Texts in Mathematics, vol. 265, Springer, Dordrecht, 2012.

\bibitem{Shele90}
V.~Yu. Shelepov, \emph{On the index and spectrum of integral operators of
  potential type along {R}adon curves}, Mat. Sb. \textbf{181} (1990), no.~6,
  751--778.

\bibitem{VS13}
C.~A. Valagiannopoulos and A.~Sihvola, \emph{Improving the electrostatic field
  concentration in a negative-permittivity wedge with a grounded "bowtie"
  configuration}, Radio Sci. \textbf{48} (2013), no.~3, 316--325.

\bibitem{Ver84}
Gregory Verchota, \emph{Layer potentials and regularity for the {D}irichlet
  problem for {L}aplace's equation in {L}ipschitz domains}, J. Funct. Anal.
  \textbf{59} (1984), no.~3, 572--611.

\bibitem{WKS08}
Henrik Wall\'en, Henrik Kettunen, and Ari Sihvola, \emph{Surface modes of
  negative-parameter interfaces and the importance of rounding sharp corners},
  Metamaterials \textbf{2} (2008), 113--121.

\bibitem{YA18}
S.~Yu and H.~Ammari, \emph{Plasmonic interaction between nanospheres}, SIAM
  Rev. \textbf{60} (2018), no.~2, 356--385.

\end{thebibliography}

\end{document}